\definecolor{brown}{rgb}{0.43, 0.21, 0.1}
\definecolor{green}{cmyk}{0.64,0,0.95,0.40}
\newtheorem{theorem}{Theorem}[section]
\newtheorem{claim}{Claim}
\newtheorem{question}{Question}
\newtheorem{proposition}[theorem]{Proposition}
\newtheorem{corollary}[theorem]{Corollary}
\newtheorem{lemma}[theorem]{Lemma}
\theoremstyle{definition}
\newtheorem{example}[theorem]{Example}%[section]
\newtheorem{problem}[theorem]{Problem}%[section]
\newtheorem{remark}[theorem]{Remark}%[section]
\newtheorem{construction}{Construction}%[section]
\newtheorem{definition}[theorem]{Definition}%[section]
\newtheorem*{claim*}{Claim}
\newtheorem*{theorem*}{Theorem}
\newtheorem*{definition*}{Definition}
\newcommand\w{\omega}
\newcommand{\N}{\mathbb N}
\newcommand{\Int}{\operatorname{int}}
\newcommand{\U}{\mathcal U}
\newcommand{\B}{\mathcal{B}}
\newcommand{\G}{\mathcal{G}}
\newcommand{\F}{\mathcal{F}}
\newcommand{\cl}{\operatorname{cl}}
\newcommand{\la}{\langle}
\newcommand{\ra}{\rangle}
\newcommand{\W}{\mathcal W}
\newcommand{\nothing}[1]{}
\def\N{\mathbb N}
\def\t_c{\tau_{comp}}
\def\cl{\operatorname{cl}}
\def\Int{\operatorname{Int}}
\def\F{\mathcal{F}}
\def\H{\mathcal{H}}
\def\G{\mathcal G}
\def\cl{\operatorname{cl}}
\begin{document}

\title[Open filters and measurable cardinals]
{Open filters and measurable cardinals}

%\title[Spaces admitting finitely many free open filters]
%{Spaces admitting finitely many free open filters}

\author{Serhii Bardyla, Jaroslav \v{S}upina and Lyubomyr Zdomskyy}

\address{University of Vienna, 
Institute of Mathematics, 
Kolingasse 14-16, 
1090 Vienna, 
Austria.}
\email{sbardyla@gmail.com}
\urladdr{http://www.logic.univie.ac.at/$\tilde{\ }$bardylas55/}

\address{Institute of Mathematics,
P.J. \v{S}af\'arik University in Ko\v sice, Slovakia}
\email{jaroslav.supina@upjs.sk}
\urladdr{https://umv.science.upjs.sk/supina/index.html}

\address{TU Wien,
Institute of Discrete Mathematics and Geometry
Research Unit 1 (Algebra)
TU Wien,
Wiedner Hauptstrasse 8–10/104
1040 Wien, Austria}
\email{lzdomsky@gmail.com}
\urladdr{https://dmg.tuwien.ac.at/zdomskyy/}

\subjclass[2010]{54A20, 54D80, 06Bxx, 03E55.}
\keywords{Lattice of filters, free open filter, open ultrafilter, measurable cardinal, scattered space}

\thanks{The research of the first named author was funded in whole by the Austrian Science Fund FWF [10.55776/ESP399]. The~second named author was supported by the~Slovak Research and Development Agency under the Contract no. APVV-20-0045, and the~Slovak Grant Agency VEGA under the contract No. VEGA 1/0657/22. The~research was initiated during the~research stay of the~second author under the~scholarship ICM-2020-00442 of the~Austrian Agency for International Cooperation in Education and Research (OeAD-GmbH). 
The research of the third named author was funded in whole by the Austrian Science Fund FWF [10.55776/I5930].}

\begin{abstract}
In this paper, we investigate the poset $\mathbf{OF}(X)$ of free open filters on a given space $X$. In particular, we characterize spaces for which $\mathbf{OF}(X)$ is a lattice. For each $n\in\mathbb{N}$ we construct a scattered space $X$ such that $\mathbf{OF}(X)$ is order isomorphic to the $n$-element chain, which implies the affirmative answer to two questions of Mooney. Assuming CH we construct a scattered space $X$ such that $\mathbf{OF}(X)$ is order isomorphic to $(\omega+1,\geq)$. To prove the latter facts we introduce and investigate a new stratification of ultrafilters which depends on scattered subspaces of $\beta(\kappa)$. 
Assuming the existence of $n$ measurable cardinals, for every $m_0,\ldots,m_{n}\in\N$ we construct a space $X$ such that $\mathbf{OF}(X)$ is order isomorphic to $\prod_{i=0}^nm_i$.
Also, we show that the existence of a metric space possessing a free $\omega_1$-complete closed, $G_\delta$, $F_{\sigma}$ or Borel ultrafilter is equivalent to the existence of a measurable cardinal.
\end{abstract}

\maketitle

%\vspace{-3mm}

\section{Introduction and preliminaries}
We shall follow terminology from \cite{Eng} and \cite{Kun}.
In this paper, all topological spaces are assumed to be Hausdorff and infinite. 
Each natural number $n$ is identified with the set $\{0,\ldots,n-1\}$. For a set $X$, by $[X]^{<\omega}$ we denote the set of all finite subsets of $X$. 
Let $\F$ be a filter on a set $X$ and $Y\subset X$ be a positive set with respect to $\F$, then the filter $\{F\cap Y: F\in \F\}$ on $Y$ is called a {\em trace of the filter $\F$ on $Y$}. 
For any element $x$ of a space $X$ by $\mathcal{N}(x)$ we denote the filter on $X$ whose base consists of all open neighborhoods of $x$. 

Let $R$ be some property of subsets of a topological space $X$.  
A filter $\F$ on $X$ is called
\begin{itemize}
\item {\em an $R$ filter} if $\F$ possesses a base consisting of the sets with property $R$;
\item {\em an $R$ ultrafilter} if $\F$ is an $R$ filter, and for any subset $A\subseteq X$ with property $R$ either $A\in \F$ or $X\setminus A\in \F$;
%\item {\em $R$ (ultra)filter on $Y$} if the trace of $\F$ on $Y$ is an $R$ (ultra)filter;
\item  {\em free} if every $x\in X$ possesses an open neighborhood $U$ such that $X\setminus U\in \F$.
%$\bigcap\{\overline{F}: F\in \mathcal{F}\}=\emptyset$.
%\item {\em $\kappa$-complete} if for any $\lambda\in\kappa$ and subfamily $\{F_{\alpha}:\alpha\in\lambda\}\subset \mathcal{F}$ the set $\bigcap_{\alpha\in\lambda}F_{\alpha}$ belongs to $\mathcal{F}$.
\end{itemize}
If a property $R$ is closed under finite intersections, then an $R$ filter $\F$ on $X$ is an $R$ ultrafilter if and only if $\F$ is maximal with respect to the inclusion among $R$ filters on $X$.
In place of $R$ we consider properties of being open, closed, 
%zero set, cozero set, 
$G_\delta$, $F_{\sigma}$ and Borel.

Free open ultrafilters were investigated in~\cite{CP, Liu, Liu2, MF}. In particular, Carlson and Porter~\cite{CP} applied open ultrafilters to the study of maximal points and lower topologies in the poset of Hausdorff topologies on a given set. By $\beta(X)$ we denote the \v{C}ech-Stone compactification of a Tychonoff space $X$.  Open ultrafilters naturally appear in \v{C}ech-Stone compactifications. A point $y\in\beta(X)\setminus X$ is called {\em remote} if $y$ is not in the closure of any nowhere dense subset of $X$. 
%Open ultrafilters naturally appear in \v{C}ech-Stone compactifications of Tychonoff spaces. Namely, a known result of van Douwen~\cite{vD1} states that if $X$ is a non-pseudocompact Tychonoff space with a countable $\pi$-base, then $\beta(X)\setminus X$ contains a remote point, i.e., there exists $y\in\beta(X)\setminus X$ such that $y$ is not in the closure of any nowhere dense subset of $X$. 
Van Douwen~\cite{vD2} showed that a point $y\in\beta(X)\setminus X$ is remote if and only if the trace of $\mathcal N(y)$ on $X$ is an open ultrafilter. See~\cite{BD, vD1, vD2, Dow0, Dow, Dow1, Dow2, Dow3, Terad, Ter, Vek} for more about remote points and their applications to non-homogeneity and butterfly points of \v{C}ech-Stone compactifications.  
%The set of all free open ultrafilters on a space $X$ is denoted by $\mathbf{U}(X)$. 
%Recall that a filter $\mathcal{F}$ is called 

%From the~work of Terada~\cite{Terad} it can be derived that the existence of a Tychonoff space $X$ possessing a free $\w_1$-complete open ultrafilter is equivalent to the existence of a measurable cardinal.  

By $\mathbf{OF}(X)$ we denote the poset of all free open filters on a space $X$  partially ordered by the inclusion, i.e., $\mathcal{F}_1\leq \mathcal{F}_2$ if and only if $\mathcal{F}_1\subseteq \mathcal{F}_2$ for any $\F_1,\F_2\in \mathbf{OF}(X)$. 
A topological space $X$ is called {\em H-closed} if $\mathbf{OF}(X)=\emptyset$. %\js{H standing for Hausdorff?} %Let us note that a space $X$ is H-closed iff $X$ is a closed subset of any Hausdorff topological space $Y$ which contains $X$ as a subspace.
Clearly, each compact space is H-closed, but the converse is not true.
Free open filters are useful in investigating H-extensions.
A space $Y$ is called an {\em extension} of a space $X$ if $Y$ contains $X$ as a dense subspace. If, moreover, the space $Y$ is Hausdorff and H-closed, then it is called an {\em H-extension} of $X$.
H-extensions is a classical topic in General Topology. It was investigated by many authors in \cite{K,K2, K3, G,P,PV,PV2,PVV,PW,S,T,V}, also see the monograph~\cite{PWo}. Two H-extensions of a space $X$ are equivalent if there exists a homeomorphism between them fixing points of $X$. We identify equivalent H-extensions. By $\mathbf{H}(X)$ we denote the set of all H-extensions of a space $X$.

Free closed filters on topological spaces were investigated by Pelant, Simon and Vaughan~\cite{PSV}, and van Douwen~\cite{vD3}. In particular, in~\cite{PSV} it was shown that every Hausdorff (Tychonoff, resp.) non-compact space admits at least $\omega_1$ ($\omega_2$, resp.) free closed filters. Also, there was constructed a normal space possessing a unique free  closed ultrafilter. 
This motivated Pelant, Simon and Vaughan to ask about the smallest possible non-zero number of free open filters on a given Hausdorff space.

Mooney~\cite{M} constructed a Hausdorff space $X$ admitting a unique free open filter and this way answered the question of Pelant, Simon and Vaughan. Note that any space $X$ such that $|\mathbf{OF}(X)|=1$ admits a unique Hausdorff extension $Y$. Moreover, $Y$ is H-closed and $Y\setminus X$ is singleton. Also, Mooney showed that for each Bell number\footnote{For a positive integer $n$ the Bell number $B(n)$ is equal to the number of equivalence relations on an $n$-element set.} $B(n)$ there exists a space $X$ such that $|\mathbf{H}(X)|=B(n)$. This motivated him to ask the following two questions.

\begin{question}[{\cite[Question 6.8]{M}}]\label{q1}
Is there a Hausdorff space with exactly three H-extensions? Four? Other non-Bell numbers?
\end{question}

\begin{question}[{\cite[Question 6.9]{M}}]\label{q2}
Is there a Hausdorff space with two one-point H-extensions and no other H-extensions?
\end{question}

For a cardinal $\kappa$ a filter $\F$ is called {\em $\kappa$-complete} if for any $\lambda\in\kappa$ and a subfamily $\{F_{\alpha}:\alpha\in\lambda\}\subset \mathcal{F}$ the set $\bigcap_{\alpha\in\lambda}F_{\alpha}$ belongs to $\mathcal{F}$.
An uncountable cardinal $\kappa$ which possesses a nonprincipal $\kappa$-complete ultrafilter is called {\em measurable}. The existence of a measurable cardinal cannot be proved within ZFC.
The following fact is a folklore (see also \cite[Lemma 10.2]{Jec}). 
\begin{theorem}\label{classic}
There exists a nonprincipal $\omega_1$-complete ultrafilter if and only if there exists a measurable cardinal.
\end{theorem}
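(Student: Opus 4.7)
The forward direction is essentially immediate: if $\kappa$ is measurable, then by definition there is a nonprincipal $\kappa$-complete ultrafilter $\U$ on $\kappa$, and since $\kappa$ is uncountable, $\kappa$-completeness implies $\omega_1$-completeness, yielding the required ultrafilter. So the substantive content is the reverse implication, and my plan is the standard minimality argument.

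Suppose some set admits a nonprincipal $\omega_1$-complete ultrafilter. Let $\kappa$ be the least cardinal carrying such an ultrafilter $\U$. I claim $\kappa$ is measurable, and it suffices to show that $\U$ is $\kappa$-complete. Since $\U$ is $\omega_1$-complete, $\kappa$ must be uncountable, so the only thing left is to upgrade $\omega_1$-completeness to full $\kappa$-completeness.

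Suppose toward a contradiction that $\U$ fails to be $\kappa$-complete, and let $\lambda$ be the least cardinal such that some family of $\lambda$ sets in $\U$ has intersection outside $\U$; by $\omega_1$-completeness, $\omega_1\leq\lambda<\kappa$. Taking complements gives $\lambda$ sets not in $\U$ whose union is in $\U$, which I would then refine by a standard disjointification to obtain a partition $\{B_\alpha:\alpha<\lambda\}$ of some set in $\U$ with each $B_\alpha\notin\U$ (without loss of generality, a partition of $\kappa$). Define $\V$ on $\lambda$ by
\[
S\in\V \iff \bigcup_{\alpha\in S}B_\alpha\in\U.
\]
Routine verification shows $\V$ is a nonprincipal ultrafilter on $\lambda$, and $\omega_1$-completeness of $\V$ transfers from that of $\U$: for $\{S_n:n<\omega\}\subseteq\V$, the sets $\bigcup_{\alpha\in S_n}B_\alpha$ lie in $\U$, so by $\omega_1$-completeness of $\U$ their intersection $\bigcup_{\alpha\in\bigcap_n S_n}B_\alpha$ also lies in $\U$, whence $\bigcap_n S_n\in\V$. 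This contradicts the minimality of $\kappa$.

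There is no real obstacle here; the only place that requires care is the disjointification/pushforward step, where one must verify that the image ultrafilter is genuinely nonprincipal (which follows from each $B_\alpha\notin\U$) and that $\omega_1$-completeness is preserved under the pushforward. Both are straightforward once the partition is in place.
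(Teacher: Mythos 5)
Your proof is correct, and it is essentially the standard argument behind the result the paper cites as folklore (Jech, Lemma 10.2): take the least $\kappa$ carrying a nonprincipal $\omega_1$-complete ultrafilter and, via disjointification and the pushforward to a smaller $\lambda$, upgrade $\omega_1$-completeness to $\kappa$-completeness by minimality. The paper itself offers no proof, so there is nothing further to compare.
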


The latter theorem was generalized for open ultrafilters in~\cite{Liu2}.

\begin{theorem}[Liu]\label{Liu}
There exists a space $X$ which possesses a free $\omega_1$-complete open ultrafilter if and only if there exists a measurable cardinal.
\end{theorem}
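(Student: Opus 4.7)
For $(\Leftarrow)$ I would start with a measurable cardinal $\kappa$, use Theorem~\ref{classic} to obtain a nonprincipal $\omega_1$-complete ultrafilter $\mathcal{U}$ on $\kappa$, and equip $\kappa$ with the discrete topology. Then every subset of $\kappa$ is open, so $\mathcal{U}$ is automatically an open ultrafilter, it remains $\omega_1$-complete, and it is free because each $\{\alpha\}$ is open and $\kappa\setminus\{\alpha\}\in\mathcal{U}$ by nonprincipality.

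For $(\Rightarrow)$, suppose $\mathcal{F}$ is a free $\omega_1$-complete open ultrafilter on $X$. The plan is to extract a nonprincipal $\omega_1$-complete ultrafilter on a suitable cardinal $\kappa$ and then invoke Theorem~\ref{classic}. First I would apply Zorn's lemma to pick a maximal pairwise disjoint family $\mathcal{W}=\{W_\alpha:\alpha<\kappa\}$ of nonempty open subsets of $X$ which do not belong to $\mathcal{F}$; such sets exist by freeness together with the open ultrafilter property, which produce, for each $x\in X$, an open $V_x\ni x$ with $X\setminus V_x\in\mathcal{F}$, hence $V_x\notin\mathcal{F}$.

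The decisive step will be to show that $W:=\bigsqcup_{\alpha<\kappa}W_\alpha$ belongs to $\mathcal{F}$. If not, then $X\setminus W\in\mathcal{F}$ since $W$ is open, and as $\mathcal{F}$ has a base of open sets there is an open $U\in\mathcal{F}$ with $U\subseteq X\setminus W$; because $X\setminus W$ is closed, $U\subseteq\operatorname{Int}(X\setminus W)=X\setminus\overline{W}$, making $X\setminus\overline{W}$ a nonempty open set. Freeness applied to any $x\in X\setminus\overline{W}$ then gives an open $V_x\notin\mathcal{F}$ containing $x$, and $V_x\cap(X\setminus\overline{W})$ is a nonempty open subset of $X$, not in $\mathcal{F}$, disjoint from $W$; adjoining it to $\mathcal{W}$ contradicts maximality. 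This is where the interplay of all three hypotheses (freeness, the open-ultrafilter condition, and the fact that $\mathcal{F}$ has a base of open sets) enters essentially, and I expect it to be the conceptual heart of the argument.

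Finally, with $W\in\mathcal{F}$, define $\mathcal{U}:=\{A\subseteq\kappa:\bigsqcup_{\alpha\in A}W_\alpha\in\mathcal{F}\}$. Pairwise disjointness of $\mathcal{W}$ yields the identity $\bigsqcup_{\alpha\in A\cap B}W_\alpha=\bigl(\bigsqcup_{\alpha\in A}W_\alpha\bigr)\cap\bigl(\bigsqcup_{\alpha\in B}W_\alpha\bigr)$, and an analogous identity for countable intersections, so $\mathcal{U}$ is a filter inheriting $\omega_1$-completeness from $\mathcal{F}$. For primality, each $\bigsqcup_{\alpha\in A}W_\alpha$ is open and thus decided by $\mathcal{F}$; in the case $X\setminus\bigsqcup_{\alpha\in A}W_\alpha\in\mathcal{F}$ one intersects with $W$ to get $\bigsqcup_{\alpha\in\kappa\setminus A}W_\alpha\in\mathcal{F}$, showing $\kappa\setminus A\in\mathcal{U}$. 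Nonprincipality of $\mathcal{U}$ follows from $W_\alpha\notin\mathcal{F}$ for every $\alpha$. Invoking Theorem~\ref{classic} on $\mathcal{U}$ then produces a measurable cardinal, completing the proof.
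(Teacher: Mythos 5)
Your proof is correct. Note, however, that the paper does not prove this statement at all: Theorem~\ref{Liu} is quoted from Liu's 1969 paper, so there is no in-text argument to compare with line by line. Both directions of your argument check out: the discrete space on a measurable cardinal handles sufficiency, and for necessity your maximal pairwise disjoint family $\{W_\alpha:\alpha<\kappa\}$ of nonempty open sets outside $\F$ does the job — freeness guarantees such sets exist, maximality together with freeness and the open base of $\F$ forces $\bigcup_\alpha W_\alpha\in\F$ (your contradiction via $X\setminus\overline{W}$ is sound, since any open subset of the witness $V_x\notin\F$ is again outside $\F$ by upward closure), pairwise disjointness makes $A\mapsto\bigcup_{\alpha\in A}W_\alpha$ commute with countable intersections so that the induced filter on $\kappa$ inherits $\omega_1$-completeness, and the open-ultrafilter property of $\F$ applied to the open set $\bigcup_{\alpha\in A}W_\alpha$, intersected with $W$, gives primality. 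It is worth observing that your argument closely parallels the proof the authors do give for the closed-ultrafilter analogue, Proposition~\ref{met}: there the disjoint open family is extracted from a $\sigma$-disjoint refinement supplied by the screenedness hypothesis, because a closed ultrafilter need not decide an arbitrary open union; in your setting the open ultrafilter decides such unions outright, and a maximal cellular family (available in any space via Zorn) replaces screenedness, which is precisely why Liu's theorem needs no covering-type hypothesis on $X$ while Proposition~\ref{met} does. The only cosmetic remark is that the existence of nonempty open sets outside $\F$ needs only freeness and properness of the filter, not the ultrafilter property.
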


This paper is organized as follows: in Section~\ref{2} we investigate spaces which possess free $\w_1$-complete closed, $G_\delta$, $F_\sigma$ or Borel ultrafilters. In particular, we show that the existence of a metric space possessing a free $\w_1$-complete closed, $G_\delta$, $F_\sigma$ or Borel ultrafilter is equivalent to the existence of a
measurable cardinal. Section~\ref{3} is devoted to general properties of open filters. There we characterize spaces for which the poset of free open filters is a lattice. In Section~\ref{4} we investigate spaces which possess linearly ordered lattices of free open filters. In particular, we affirmatively answer Questions~\ref{q1} and~\ref{q2}, as well as introduce and investigate a new stratification of ultrafilters based on scattered subspaces of $\beta(\kappa)$. In Section~\ref{5} we deal with spaces having finite non-linear lattice of free open filters. In particular, assuming the existence of $n$ measurable cardinals, for every $m_0,\ldots,m_{n}\in\N$ we construct a space $X$ such that $\mathbf{OF}(X)$ is order isomorphic to $\prod_{i=0}^nm_i$.

%In this paper we characterize spaces whose posets of free open filters are lattices.
% This way we give an affirmative answer to questions~\ref{q1} and~\ref{q2}.   We show that only complete distributive lattices can be isomorphic to $\mathbf{OF}(X)$ for some Hausdorff space $X$. %This sheds some light on Question~\ref{q3}. 
%It is proved that the existence of a space $X$ which possesses a free $\omega_1$-complete open ultrafilter is equivalent to the existence of a measurable cardinal. This way we explicitly answer Question~\ref{q4}.

\section{Completeness of different sorts of ultrafilters}\label{2}

Looking at Theorem~\ref{Liu} it is natural to ask whether similar characterization holds also for closed, $G_\delta$, $F_\sigma$ or Borel ultrafilters.

Following~\cite{Burke}, a space $X$ is called {\em screened} if 
any open cover $\mathcal V$ of a space $X$ admits a refinement $\mathcal W=\bigcup_{n\in\omega}\mathcal W_n$, where for each $n\in\omega$ the family $\mathcal W_n$ consists of pairwise disjoint open sets. By Corollary 2.4 from~\cite{Burke}, every paracompact space is screened.

\begin{proposition}\label{met}
There exists a screened space $X$ possessing a free $\omega_1$-complete closed ultrafilter if and only if there exists a measurable cardinal.
\end{proposition}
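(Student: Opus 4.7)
The plan is to prove both implications, with the easy direction using a discrete space of measurable cardinality and the harder direction using the screened property to reduce a free closed ultrafilter to a classical nonprincipal $\omega_1$-complete ultrafilter on a cardinal, at which point Theorem~\ref{classic} yields a measurable cardinal.

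For the ``if'' direction, suppose $\kappa$ is a measurable cardinal. Let $X=\kappa$ carry the discrete topology. Then $X$ is paracompact, hence screened by Corollary 2.4 of \cite{Burke}. Every subset of $X$ is both open and closed, so a nonprincipal $\omega_1$-complete ultrafilter $\F$ on $\kappa$ (which exists by definition of measurability) is automatically a closed ultrafilter on $X$, and freeness follows at once because $\{x\}$ is open and $X\setminus\{x\}\in\F$ for every $x\in\kappa$.

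For the ``only if'' direction, let $X$ be screened and $\F$ a free $\omega_1$-complete closed ultrafilter on $X$. For each $x\in X$ choose an open neighborhood $U_x$ with $X\setminus U_x\in\F$, and apply the screening property to the cover $\{U_x:x\in X\}$ to obtain a refinement $\mathcal W=\bigcup_{n\in\w}\mathcal W_n$ in which each $\mathcal W_n$ consists of pairwise disjoint open sets. For every $W\in\mathcal W$ we have $X\setminus W\supseteq X\setminus U_x\in\F$ for some $x$, so $X\setminus W\in\F$. Since $\bigcap_n \bigl(X\setminus\bigcup\mathcal W_n\bigr)=\emptyset\notin\F$, $\omega_1$-completeness forces some $n$ with $X\setminus\bigcup\mathcal W_n\notin\F$; the closed ultrafilter property then gives $U:=\bigcup\mathcal W_n\in\F$. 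Fix such $n$ and index $\mathcal W_n=\{W_\alpha:\alpha\in\kappa\}$.

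Now transfer $\F$ to $\kappa$ by setting $\G=\{A\subseteq\kappa:U_A\in\F\}$, where $U_A:=\bigcup_{\alpha\in A}W_\alpha$. Pairwise disjointness of the $W_\alpha$'s gives $U_A\cap U_B=U_{A\cap B}$ and $U_{\kappa\setminus A}=U\setminus U_A$, from which one checks directly that $\G$ is a filter and is closed under countable intersections; the key identity to spell out is $U_{\bigcap_n A_n}=\bigcap_n U_{A_n}$. To see $\G$ is an ultrafilter on $\kappa$, note that for arbitrary $A\subseteq\kappa$, $U_A$ is open, so either $U_A\in\F$ (giving $A\in\G$) or $X\setminus U_A\in\F$; in the latter case $U\cap(X\setminus U_A)=U_{\kappa\setminus A}$ belongs to $\F$ by the filter property, so $\kappa\setminus A\in\G$. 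Finally $\G$ is nonprincipal, because $\{\alpha\}\in\G$ would mean $W_\alpha\in\F$, contradicting $W_\alpha\subseteq U_x$ and $X\setminus U_x\in\F$. Hence $\kappa$ must be uncountable, and Theorem~\ref{classic} produces a measurable cardinal.

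The one step that requires care is verifying the ultrafilter property of $\G$: $X\setminus U_A\in\F$ only gives a \emph{closed} complement, and without intersecting with the distinguished open set $U\in\F$ one cannot conclude $U_{\kappa\setminus A}\in\F$. The disjointness of the $W_\alpha$'s, together with the fact that $U\in\F$ was secured by $\omega_1$-completeness in the previous paragraph, is precisely what makes the transfer to $\kappa$ go through.
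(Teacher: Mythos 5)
Your proof is correct and follows essentially the same route as the paper: screenedness applied to an open cover witnessing freeness, $\omega_1$-completeness together with the closed-ultrafilter dichotomy to find a level $\mathcal W_n$ whose union lies in $\F$, and a transfer of $\F$ to a nonprincipal $\omega_1$-complete ultrafilter on the index set $\kappa$, finishing with Theorem~\ref{classic}. The only cosmetic difference is in the transfer step: the paper takes the filter generated by the sets $H_F=\{\alpha\in\kappa: F\cap W_\alpha\neq\emptyset\}$, whereas you define $A\in\G$ directly by $\bigcup_{\alpha\in A}W_\alpha\in\F$; thanks to the pairwise disjointness of $\mathcal W_n$ these yield the same ultrafilter.
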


\begin{proof}
The ``if'' part of the proof follows from the fact that every discrete space is screened. 

Assume that a screened space $X$ possesses a free $\omega_1$-complete closed ultrafilter $\F$. Consider the open cover $\mathcal V=\{X\setminus F: F\in \F$, $F$ is closed$\}$.
Since the space $X$ is screened, the cover $\mathcal V$ admits a refinement $\mathcal W=\bigcup_{n\in\omega}\mathcal W_n$, where for each $n\in\omega$ the family $\mathcal W_n$ consists of pairwise disjoint open sets.  
For each $n\in\w$ let $\mathbf{W_n}=\bigcup \mathcal W_n$. Note that if $\mathbf{W_n}\notin \F$, then the maximality of $\F$ implies that the closed set $X\setminus\mathbf{W_n}$ belongs to $\F$. Consider the set $M=\{n\in\omega: \mathbf{W_n}\in\F\}$. %The following argument ensures that $M\neq \emptyset$.
If $\mathbf{W_n}\notin \F$ for all $n\in\w$, then, by the $\omega_1$-completeness of $\F$, $\emptyset=\bigcap_{n\in\omega}(X\setminus \mathbf{W_n})\in \F$ which implies a contradiction. Hence the set $M$ is not empty. 
%By the definition of the cover $\mathcal V$, there exists $n\in M$ such that $W\notin \F$ for any $W\in\mathcal W_n$;
Fix any $n\in M$.
Enumerate $\mathcal W_n$ as $\{W_{\alpha}:\alpha\in\kappa\}$. For any $F\in\F$ define $H_F=\{\alpha\in\kappa: F\cap W_{\alpha}\neq \emptyset\}$. Consider the filter $\mathcal H$ on $\kappa$ generated by the base $\{H_F:F\in\F\}$. By the definition of the cover $\mathcal V$, $W_{\alpha}\notin \F$ for any $\alpha\in\kappa$. Therefore, the filter $\mathcal H$ is nonprincipal. Since the filter $\F$ is $\omega_1$-complete, then so is $\mathcal H$. Pick any subset $A\subset \kappa$.
Assume that $A\notin\mathcal H$. Then for every $F\in \F$ the set $F\cap (\bigcup_{\alpha\in\kappa\setminus A}W_{\alpha})\neq \emptyset$. It follows that $F\cap (X\setminus (\bigcup_{\alpha\in A})W_{\alpha})\neq \emptyset$ for all $F\in \F$. Since the set $G=X\setminus (\bigcup_{\alpha\in A})W_{\alpha}$ is closed and $\F$ is a closed ultrafilter, we get that $G\in\F$. %Fix any $F\in\F$ such that $F\subset \mathbf{W_n}$. 
Then $\mathcal H\ni H_G\subset \kappa\setminus A$, witnessing that $\kappa\setminus A\in\mathcal H$.
Hence $\mathcal H$ is a nonprincipal $\omega_1$-complete ultrafilter. Theorem~\ref{classic} yields the existence of a measurable cardinal.
\end{proof}

\begin{lemma}\label{Fs}
An $\omega_1$-complete filter $\F$ on a space $X$ is an $F_{\sigma}$ ultrafilter if and only if $\F$ is a closed ultrafilter.
 \end{lemma}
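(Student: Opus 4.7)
The plan is to prove both directions by bouncing between the two defining properties of an $R$ ultrafilter (namely, having an $R$ base and deciding every $R$ set) and to extract from $\omega_1$-completeness the ability to handle countable decompositions of $F_\sigma$ sets. The key observation is that every closed set is trivially $F_\sigma$, so the two notions share the same ``decision'' sets on the closed side; all the content lies in promoting closed-decidability to $F_\sigma$-decidability (and vice versa) using countable unions.

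For the direction closed ultrafilter $\Rightarrow$ $F_\sigma$ ultrafilter, I would first note that a closed base is automatically an $F_\sigma$ base, so $\F$ is an $F_\sigma$ filter. To check maximality, let $A = \bigcup_{n \in \omega} A_n$ be any $F_\sigma$ set with each $A_n$ closed. Apply the closed ultrafilter property to each $A_n$: either some $A_n \in \F$, in which case $A \supseteq A_n$ forces $A \in \F$; or every $A_n \notin \F$, so every $X \setminus A_n \in \F$, and then $\omega_1$-completeness gives $X \setminus A = \bigcap_{n \in \omega}(X \setminus A_n) \in \F$. (Note one does not need $X \setminus A_n$ to be closed — only that it lies in $\F$.)

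For the converse, assume $\F$ is an $F_\sigma$ ultrafilter. The main step is to show that $\F$ in fact has a base of closed sets. Pick any $F_\sigma$ base element $B \in \F$ and write $B = \bigcup_{n \in \omega} B_n$ with each $B_n$ closed. If no $B_n$ lies in $\F$, then (since each $B_n$ is closed and hence $F_\sigma$) the $F_\sigma$ ultrafilter property yields $X \setminus B_n \in \F$ for every $n$, and $\omega_1$-completeness gives $X \setminus B = \bigcap_n(X \setminus B_n) \in \F$, contradicting $B \in \F$. Hence some $B_n \in \F$, producing a closed set in $\F$ below $B$; this shows $\F$ is a closed filter. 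Maximality of $\F$ on the closed sets is then immediate, since every closed set is $F_\sigma$ and so is decided by the $F_\sigma$ ultrafilter $\F$.

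I do not expect a serious obstacle here: the argument is essentially a matter of writing $F_\sigma$ sets as countable unions of closed sets and invoking $\omega_1$-completeness to turn ``no closed piece is in $\F$'' into ``the whole union is out of $\F$.'' The one subtlety to keep visible is that at no point is the complement of a closed or $F_\sigma$ set required to be of the same type; only membership in $\F$ is used, which is exactly what the ultrafilter decisions provide.
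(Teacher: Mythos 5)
Your proof is correct and follows essentially the same route as the paper's: in both directions you decompose the $F_\sigma$ set into a countable union of closed pieces, use the relevant ultrafilter property on each piece, and invoke $\omega_1$-completeness to intersect the complements. The only differences are cosmetic (you phrase the closed-to-$F_\sigma$ direction as a case split rather than starting from a set not in $\F$), and your explicit remark that complements need not be of the same type is exactly the point the paper relies on implicitly.
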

 
 \begin{proof}
($\Rightarrow$) Fix an arbitrary $F\in\F$. There exists an $F_{\sigma}$ set  $B\subset F$ which belongs to $\F$. Then $B=\bigcup_{i\in\omega}B_i$ where the sets $B_i$ are closed. To derive a contradiction, assume that there exists no $i\in\omega$ such that $B_i\in\F$. Taking into account that the sets $B_i$ are closed and $\F$ is an $F_{\sigma}$ ultrafilter, we get that $X\setminus B_i\in \F$ for every $i\in\omega$. Since the filter $\F$ is $\omega_1$-complete, the set $C=\bigcap_{i\in\omega}(X\setminus B_i)$ belongs to $\F$. But $\emptyset=B\cap C\in\F$ which implies a contradiction. Hence there exists $i\in\omega$ such that $B_i\in\F$. This provides that $\F$ is a closed filter. At this point it is easy to see that $\F$ is a closed ultrafilter.

($\Leftarrow$) Fix any $F_{\sigma}$ subset $B\subset X$ such that $B\notin \F$. Find closed sets $B_{i}$, $i\in\omega$ such that $B=\bigcup_{i\in\omega}B_i$. Since $\F$ is a closed ultrafilter and $B\notin \F$, for each $i\in\omega$ there exists $F_i\in\F$ such that $F_i\cap B_i=\emptyset$. Then $F=\bigcap F_i\in \F$ and $F\cap B=\emptyset$, witnessing that $\F$ is an $F_{\sigma}$ ultrafilter.
 \end{proof}

A space $X$ is called {\em perfectly normal} if it is normal and every closed subset in $X$ is $G_{\delta}$. For a filter $\F$ on a space $X$ by $\overline{\F}$ we denote the closed filter on $X$ generated by the family $\{\overline{F}: F\in\F\}$.

\begin{lemma}\label{Gd}
Let $R$ be any property such that each closed subset of a perfectly normal space $X$ has this property. If $\F$ is a free $\omega_1$-complete $R$ ultrafilter on $X$, then $\overline{\F}$ is a free $\omega_1$-complete closed ultrafilter.
\end{lemma}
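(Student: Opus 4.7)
The plan is to verify directly that $\overline{\F}$ satisfies the four required properties: it is a closed filter, it is free, it is $\omega_1$-complete, and it is maximal among closed filters.

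First I would observe that $\{\overline{F}:F\in\F\}$ is a base of closed sets for a proper filter: the containment $\overline{F_1\cap F_2}\subseteq\overline{F_1}\cap\overline{F_2}$ handles the finite intersection property of the base, and nonemptiness of each $F\in\F$ gives properness. This makes $\overline{\F}$ a closed filter. Freeness transfers from $\F$: given $x\in X$, pick an open neighborhood $U$ of $x$ with $X\setminus U\in\F$; since $X\setminus U$ is already closed, it lies in the base of $\overline{\F}$. For $\omega_1$-completeness, I would use $\overline{\bigcap_{n\in\omega}F_n}\subseteq\bigcap_{n\in\omega}\overline{F_n}$ combined with $\bigcap_{n\in\omega}F_n\in\F$ (by $\omega_1$-completeness of $\F$).

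The main content is maximality among closed filters. Let $C\subseteq X$ be an arbitrary closed set. By the hypothesis on $R$, $C$ has property $R$, so the $R$-ultrafilter dichotomy applied to $C$ gives either $C\in\F$ or $X\setminus C\in\F$. If $C\in\F$, then $C=\overline{C}$ is in the base of $\overline{\F}$. If $X\setminus C\in\F$, perfect normality of $X$ provides an $F_\sigma$-decomposition $X\setminus C=\bigcup_{n\in\omega}B_n$ with each $B_n$ closed. Each $B_n$ has property $R$, so the $R$-ultrafilter dichotomy applies to each of them; if no $B_n$ belonged to $\F$, then $X\setminus B_n\in\F$ for every $n$, and $\omega_1$-completeness of $\F$ would yield $C=\bigcap_{n\in\omega}(X\setminus B_n)\in\F$, contradicting $X\setminus C\in\F$. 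Hence some $B_n$ lies in $\F$, and then $B_n=\overline{B_n}\in\overline{\F}$ with $B_n\subseteq X\setminus C$, which upgrades to $X\setminus C\in\overline{\F}$.

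The main obstacle is precisely this final step: producing a closed member of $\overline{\F}$ that sits inside $X\setminus C$, using only the $R$-ultrafilter dichotomy of $\F$. The resolution rests on a tight interplay of three ingredients — perfect normality to decompose $X\setminus C$ into closed pieces, the hypothesis on $R$ to make each piece eligible for the dichotomy, and $\omega_1$-completeness of $\F$ to force one of them into $\F$. Weakening any one of these breaks the argument, which also explains why perfect normality enters the statement of the lemma at all.
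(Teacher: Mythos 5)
Your proposal is correct and follows essentially the same route as the paper: apply the $R$-ultrafilter dichotomy to a closed set, use perfect normality to write its complement as a countable union of closed sets, and use $\omega_1$-completeness to force one of those closed pieces into $\F$ (the same mechanism as in Lemma~\ref{Fs}), which yields the closed-ultrafilter property of $\overline{\F}$. The only difference is cosmetic: you verify the routine filter/free/$\omega_1$-complete properties explicitly where the paper dismisses them as clear.
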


\begin{proof}
Clearly, $\overline{\F}$ is an $\omega_1$-complete closed filter. 
Consider any closed set $A$ such that $A\notin \overline{\F}$. Assuming that $A\in \F$ we obtain that $A=\overline{A}\in\overline\F$, which implies a contradiction. Thus, $A\notin \F$. Since $A$ has property $R$ and $\F$ is a $R$ ultrafilter, we obtain that $X\setminus A\in \F$. Since every open subset of $X$ is $F_{\sigma}$, there exist closed sets $B_i$, $i\in\omega$ such that $X\setminus A=\bigcup_{i\in\omega}B_i$. Bearing in mind that the sets $B_i$, $i\in\w$ have property $R$, similarly as in the proof of Lemma~\ref{Fs} one can check that there exists $i\in\omega$ such that $B_i\in\F$. Then $B_i=\overline{B_i}\in \overline \F$ and $A\cap B_i=\emptyset$, witnessing that $\overline{\F}$ is a closed ultrafilter.
 \end{proof}

Theorem~\ref{Liu}, Proposition~\ref{met} and Lemmas~\ref{Fs},~\ref{Gd} imply the following.

\begin{theorem}\label{bim}
The following statements are equivalent:
\begin{enumerate}
    \item there exists a space $X$ possessing a free $\omega_1$-complete open ultrafilter;
    \item there exists a screened space $X$ possessing a free $\omega_1$-complete closed ultrafilter;
    \item there exists a screened space $X$ possessing a free $\omega_1$-complete $F_{\sigma}$ ultrafilter;
    \item there exists a perfectly normal screened space $X$ possessing a free $\omega_1$-complete $G_{\delta}$ ultrafilter;
      \item there exists a perfectly normal screened space $X$ possessing a free $\omega_1$-complete Borel ultrafilter;
    \item there exists a measurable cardinal.
\end{enumerate}
\end{theorem}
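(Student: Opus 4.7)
The plan is to organize the claimed equivalences into a short cycle, drawing on the results already proved. Two implications are already available from the excerpt: $(1)\Rightarrow(6)$ is Theorem~\ref{Liu} and $(2)\Rightarrow(6)$ is Proposition~\ref{met}. The forward half of Lemma~\ref{Fs} immediately gives $(3)\Rightarrow(2)$, since it asserts that a free $\omega_1$-complete $F_{\sigma}$ ultrafilter on any space is automatically a closed ultrafilter; no change of the underlying screened space is needed. So the remaining work splits into the two collective implications $(6)\Rightarrow(1)$--$(5)$ and $(4),(5)\Rightarrow(2)$.

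For $(6)\Rightarrow(1)$--$(5)$ I would handle all five targets in one stroke: pick a measurable cardinal $\kappa$ and take $X=\kappa$ equipped with the discrete topology. Such an $X$ is metrizable, hence paracompact (in particular screened) and perfectly normal, and every subset of $X$ is simultaneously open, closed, $G_{\delta}$, $F_{\sigma}$ and Borel. A nonprincipal $\kappa$-complete ultrafilter on $\kappa$ furnished by Theorem~\ref{classic} is then automatically a free $\omega_1$-complete $R$ ultrafilter for each of the five properties $R$, witnessing $(1)$--$(5)$ at once.

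For $(4)\Rightarrow(2)$ and $(5)\Rightarrow(2)$ I would invoke Lemma~\ref{Gd}. Taking $R=\,$``being $G_{\delta}$'', the perfect normality of $X$ ensures that every closed subset of $X$ has property $R$, so the lemma supplies a free $\omega_1$-complete closed ultrafilter $\overline{\F}$ on the same perfectly normal screened space, giving $(2)$. The argument for $(5)$ is formally identical with $R=\,$``being Borel'', using that closed subsets are trivially Borel in any space. Composing these with the already-established $(2)\Rightarrow(6)$ closes the cycle.

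There is no serious obstacle here; the theorem is essentially a packaging statement whose components were assembled in the preceding results. The only point I would double-check is that the filter $\overline{\F}$ produced by Lemma~\ref{Gd} really lives on the same space $X$ as the original $\F$, so that the properties ``perfectly normal'' and ``screened'' inherited from $X$ make Proposition~\ref{met} directly applicable to $\overline{\F}$ without any intermediate construction.
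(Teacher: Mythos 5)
Your proposal is correct and follows essentially the same route as the paper, whose proof is just the assembly of Theorem~\ref{Liu}, Proposition~\ref{met} and Lemmas~\ref{Fs},~\ref{Gd}, with the discrete space on a measurable cardinal giving $(6)\Rightarrow(1)$--$(5)$. Your explicit check that Lemma~\ref{Gd} keeps the same (perfectly normal, screened) space, so Proposition~\ref{met} applies directly, is exactly the intended reading.
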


Note that each metrizable space is perfectly normal, paracompact and, thus, screened. Theorem~\ref{bim} implies the following.

\begin{theorem}
For any property $R\in\{open, closed, G_{\delta}, F_{\sigma}, Borel\}$ there exists a metrizable space $X$ posse\-ssing a free $\w_1$-complete $R$ ultrafilter if and only if there exists a measurable cardinal.
\end{theorem}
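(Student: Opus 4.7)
The plan is to derive this statement as a direct corollary of Theorem~\ref{bim}, using only the standard facts that every metrizable space is both perfectly normal and paracompact, and hence screened by Corollary 2.4 of~\cite{Burke}. I would split the argument into the two implications of the biconditional, and in both directions essentially all the topological work has already been absorbed into Theorem~\ref{bim}.

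For the ``if'' direction, starting from a measurable cardinal $\kappa$, I would take $X=\kappa$ equipped with the discrete topology, which is Hausdorff, infinite and metrizable via the discrete metric. Theorem~\ref{classic} supplies a nonprincipal $\omega_1$-complete (in fact $\kappa$-complete) ultrafilter $\F$ on $\kappa$. Since every subset of a discrete space is clopen and thus simultaneously open, closed, $G_{\delta}$, $F_{\sigma}$ and Borel, the same $\F$ serves as an $R$ ultrafilter for \emph{every} $R$ in the list at once. Topological freeness is also immediate, because each singleton is open and $\F$ is nonprincipal.

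For the ``only if'' direction, I would assume that $X$ is a metrizable space carrying a free $\omega_1$-complete $R$ ultrafilter for one of the listed properties $R$. Metrizability yields perfect normality together with paracompactness, hence screenedness, so the hypothesis of one of clauses (1)--(5) of Theorem~\ref{bim} is fulfilled (the precise clause depending on which $R$ is at hand: (1) for open, (2) for closed, (3) for $F_{\sigma}$, (4) for $G_{\delta}$, (5) for Borel). Invoking the equivalence of these clauses with clause (6) of Theorem~\ref{bim} then produces a measurable cardinal.

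There is no genuine obstacle here; the subtle content sits entirely in Theorem~\ref{bim} and in the preceding Lemmas~\ref{Fs} and~\ref{Gd}. The only small point worth verifying is that the single discrete metric space of measurable cardinality really does witness all five variants of $R$ simultaneously, which is clear because on a discrete space the classes \emph{open}, \emph{closed}, $G_{\delta}$, $F_{\sigma}$ and \emph{Borel} all coincide with the full power set.
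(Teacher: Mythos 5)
Your proposal is correct and follows essentially the same route as the paper: the paper obtains this theorem directly from Theorem~\ref{bim} together with the observation that every metrizable space is perfectly normal and paracompact, hence screened, with the discrete space of measurable cardinality serving as the witness in the ``if'' direction (where all five classes of sets coincide with the power set). Your explicit verification of the discrete witness and of topological freeness just spells out what the paper leaves implicit.
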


Next we present a few examples of spaces with nice topological properties possessing a free $\omega_1$-complete closed, $G_{\delta}$ or Borel ultrafilter. 
The following simple lemma shows that this kind of examples cannot be Lindel\"{o}f.
\begin{lemma}\label{Lind}
If a space $X$ is Lindel\"{o}f, then $X$ possesses no free $\omega_1$-complete filters.
\end{lemma}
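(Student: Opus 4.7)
The proof should be a one-shot argument combining the definition of ``free'' (as recalled earlier: for every point there is an open neighborhood whose complement is in the filter) with Lindel\"ofness and $\omega_1$-completeness, so there is really only one plan. Suppose $\F$ is a free $\omega_1$-complete filter on $X$. By freeness, for each $x \in X$ pick an open neighborhood $U_x \ni x$ with $X \setminus U_x \in \F$. The family $\{U_x : x \in X\}$ is an open cover of $X$.

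Since $X$ is Lindel\"of, this cover has a countable subcover $\{U_{x_n} : n \in \omega\}$. For each $n \in \omega$ we have $X \setminus U_{x_n} \in \F$, so by $\omega_1$-completeness
\[
\bigcap_{n \in \omega} (X \setminus U_{x_n}) \;=\; X \setminus \bigcup_{n \in \omega} U_{x_n} \;=\; \emptyset
\]
belongs to $\F$, contradicting the fact that $\F$ is a filter. Hence no such $\F$ exists.

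There is no real obstacle here; the only thing to double-check is that the definition of ``free'' in force in this paper is the one stated in the preliminaries (a neighborhood witness at every point), rather than the weaker ``$\bigcap \F = \emptyset$'', and indeed that is the definition given in the list of bullet points at the start of the paper. With that convention the three ingredients — freeness producing the cover, Lindel\"ofness producing the countable subcover, and $\omega_1$-completeness producing $\emptyset \in \F$ — line up immediately.
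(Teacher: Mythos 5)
Your proof is correct and is essentially the same as the paper's: freeness gives the open cover $\{U_x\}$ with complements in $\F$, Lindel\"ofness gives a countable subcover, and $\omega_1$-completeness forces $\emptyset\in\F$. Your remark about which definition of ``free'' is in force is also the right thing to check, and it matches the paper's convention.
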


\begin{proof}
Fix any free filter $\F$ on a 
Lindel\"{o}f space $X$. Then every point $x\in X$ possesses an open neighborhood $V(x)$ such that $X\setminus V(x)\in \F$. Since $X$ is Lindel\"{o}f the open cover $\{V(x):x\in X\}$ has a countable subcover $\{V(x_n):n\in\omega\}$. Since $\bigcap_{n\in\omega}X\setminus V(x_n)=\emptyset$, the filter $\F$ is not $\omega_1$-complete. 
\end{proof}

In what follows in this section we assume that ordinals carry the order topology.
A filter $\F$ on an ordinal $\lambda$ of uncountable cofinality is called the {\em club filter} if $\F$ is generated by closed unbounded subsets of $\lambda$.  For more about club filters, see Chapter II.6 of~\cite{Kun}. By $cf(\lambda)$ we denote the cofinality of $\lambda$. The next lemma shows that Proposition~\ref{met} fails without the assumption of being screened.

\begin{lemma}\label{club}
For each ordinal $\lambda$ of uncountable cofinality the club filter is a free $cf(\lambda)$-complete closed ultrafilter.  
\end{lemma}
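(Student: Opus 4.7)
The plan is to verify directly the four constituent properties of the statement: that $\F$ is a proper filter, that it is $\mathrm{cf}(\lambda)$-complete, that it is free, and that it is maximal among closed filters. Throughout, write $\kappa=\mathrm{cf}(\lambda)$ and invoke the classical fact that the intersection of strictly fewer than $\kappa$ many clubs in $\lambda$ is again a club. This immediately yields $\kappa$-completeness and that $\F$ is a proper filter, while being generated by closed sets $\F$ is automatically a closed filter.

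Freeness is essentially by inspection of the order topology. Given any $x<\lambda$, since $\lambda$ is a limit ordinal (of uncountable cofinality) pick $\beta<\lambda$ with $\beta>x$ and take an open neighborhood $U$ of $x$ contained in $[0,\beta)$ (for instance $U=[0,\beta)$ if $x=0$ and $U=(\alpha,\beta)$ for some $\alpha<x$ otherwise). Then $\lambda\setminus U\supseteq[\beta,\lambda)$, and $[\beta,\lambda)$ is closed in $\lambda$ (its complement $[0,\beta)$ is an open initial segment) and unbounded (because $\lambda$ is a limit ordinal), hence a club, so $\lambda\setminus U\in\F$.

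The main content is the closed ultrafilter property. Let $A\subseteq\lambda$ be an arbitrary closed subset and consider two cases according to whether $A$ is bounded in $\lambda$. If $A$ is unbounded, then $A$ is itself closed and unbounded, i.e.\ a club, so $A\in\F$. If $A$ is bounded, say $A\subseteq[0,\beta]$ with $\beta<\lambda$, then $\lambda\setminus A\supseteq(\beta,\lambda)$, and $(\beta,\lambda)$ is closed in $\lambda$ (being the complement of the open initial segment $[0,\beta+1)$) and unbounded, hence a club, so $\lambda\setminus A\in\F$. In either case $\F$ decides $A$, which combined with the previous observations finishes the verification.

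I do not anticipate any real obstacle: once the standard club calculus on $\lambda$ (finite and $<\kappa$ intersections of clubs, together with closedness and unboundedness of tails $[\beta,\lambda)$) is accepted, the whole argument reduces to a clean dichotomy between bounded and unbounded closed subsets of $\lambda$.
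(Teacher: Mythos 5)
Your proof is correct and follows essentially the same route as the paper: freeness via bounded neighborhoods whose complements contain tails, the ultrafilter property via the bounded/unbounded dichotomy for closed sets (the paper only spells out the bounded case), and $\mathrm{cf}(\lambda)$-completeness by the standard fact that intersections of fewer than $\mathrm{cf}(\lambda)$ clubs are club (Kunen, Lemma~6.8). No issues.
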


\begin{proof}
Fix any bounded closed subset $A\subset \lambda$. Then the set $\{\alpha\in \lambda:\alpha> \sup A\}$ is an element of the club filter disjoint with $A$. Hence the club filter is a closed ultrafilter. Since each element of $\lambda$ has a bounded open neighborhood, the club filter is free. Finally, the $cf(\lambda)$-completeness of the club filter follows from~\cite[Lemma 6.8]{Kun}.
\end{proof}

By $\mathfrak{t}$ we denote the minimal cardinality of a maximal tower on $\omega$. See~\cite{Bla} for more details. It is known that $\omega_1\leq \mathfrak t\leq \mathfrak c$. A space $X$ is called {\em sequentially compact} if each sequence in $X$ has a convergent subsequence.

\begin{proposition}
There exists a separable first-countable normal locally compact space $X$ of cardinality $\omega_1$ which possesses a free $\omega_1$-complete closed ultrafilter. 
Moreover, if $\mathfrak t=\omega_1$, then $X$ is sequentially compact.
\end{proposition}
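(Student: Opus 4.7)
I would carry out a Franklin--Rajagopalan-style construction. Fix a $\subseteq^*$-decreasing chain $(T_\alpha)_{\alpha<\omega_1}$ of infinite subsets of $\omega$ (which exists in ZFC since $\mathfrak t\ge\omega_1$), built by a transfinite construction so as to secure the separation properties needed below. Put $X=\omega\cup\omega_1$, declare $\omega$ to be open and discrete, and let the basic neighborhoods of $\alpha\in\omega_1$ be the sets $U_\beta(\alpha)=(\beta,\alpha]\cup(T_\beta\setminus T_\alpha)$ for $\beta<\alpha$. Standard verifications then yield: Hausdorffness; first countability (the countably many $U_\beta(\alpha)$, $\beta<\alpha$, form a local base at $\alpha$); separability (the countable set $\omega$ is dense since $T_\beta\setminus T_\alpha$ is infinite inside every $U_\beta(\alpha)$); local compactness (one checks that $\overline{U_\beta(\alpha)}=[\beta,\alpha]\cup(T_\beta\setminus T_\alpha)$ is compact, using that $[\beta,\alpha]$ is a compact ordinal interval and almost all isolated points of the closure are absorbed by any single basic neighborhood of $\alpha$); and $|X|=\omega_1$.

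For the free $\omega_1$-complete closed ultrafilter I would lift the club filter of $\omega_1$ to $X$. Since $\omega$ is open in $X$, the set $\omega_1$ is closed in $X$, and the subspace topology on $\omega_1$ coincides with the order topology; in particular, every club $D\subseteq\omega_1$ is closed in $X$. Define
\[\F=\bigl\{F\subseteq X:\,F\cap\omega_1\text{ contains a club in }\omega_1\bigr\}.\]
Then $\F$ is a closed filter with a base of clubs, is $\omega_1$-complete by Lemma~\ref{club} together with the classical fact that a countable intersection of clubs is a club, and is free because every point admits a basic neighborhood whose complement contains the tail-club $[\alpha+1,\omega_1)$ (for $x=\alpha\in\omega_1$ and neighborhood $U_\beta(\alpha)$) or all of $\omega_1$ (for $x\in\omega$ and neighborhood $\{x\}$). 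Maximality among closed filters follows by the dichotomy: for any closed $C\subseteq X$, the set $C\cap\omega_1$ is closed in the order topology, so is either unbounded --- hence itself a club with $C\in\F$ --- or bounded by some $\alpha$, in which case the club $[\alpha+1,\omega_1)$ is disjoint from $C$ and witnesses $C\notin\F$.

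For the moreover clause I would invoke $\mathfrak t=\omega_1$ to ensure that $(T_\alpha)_{\alpha<\omega_1}$ is a genuine tower, i.e.\ has no pseudo-intersection. A sequence in $X$ with infinitely many terms in $\omega_1$ admits a convergent subsequence by countable compactness of $\omega_1$ in the order topology. If instead the sequence has infinitely many terms in $\omega$, let $A\subseteq\omega$ be its infinite range and let $\alpha<\omega_1$ be the least ordinal with $A\not\subseteq^* T_\alpha$; by minimality, $A\subseteq^* T_\beta$ for every $\beta<\alpha$, so any injective enumeration of the infinite set $A\setminus T_\alpha$ lies cofinitely in $T_\beta\setminus T_\alpha\subseteq U_\beta(\alpha)$ for every $\beta<\alpha$, and therefore converges to $\alpha$.

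I expect the main obstacle to be securing normality of $X$ in ZFC. Depending on the ambient set-theoretic universe the chain $(T_\alpha)$ may or may not have the right shape to make the FR-space normal, and arranging this --- for instance via a Nyikos-type transfinite construction producing a P-tower-like sequence --- is the delicate step. All other verifications (separability, first countability, local compactness, freeness and maximality of $\F$, $\omega_1$-completeness, and sequential compactness under $\mathfrak t=\omega_1$) then reduce to routine checks in the FR framework together with classical facts about club filters.
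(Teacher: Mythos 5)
Your construction is the same as the paper's (a Franklin--Rajagopalan space over an $\omega_1$-tower, with the free $\omega_1$-complete closed ultrafilter obtained by lifting the club filter from the copy of $\omega_1$, and sequential compactness under $\mathfrak t=\omega_1$ from the absence of a pseudo-intersection), and those parts of your verification are essentially right. The genuine gap is normality: it is part of the statement, yet you do not prove it --- you only flag it as ``the delicate step'' and speculate that the tower may need a special shape, perhaps arranged by a Nyikos-type construction or extra set-theoretic hypotheses. As written, the proposal therefore does not establish the proposition. Moreover, the worry is misplaced: no special tower is needed. The paper proves normality outright for an arbitrary $\omega_1$-tower by the following dichotomy. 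If $A,B\subseteq X$ are disjoint closed sets, they cannot both meet the copy $Z\cong\omega_1$ in an unbounded set (two disjoint clubs in $\omega_1$ would intersect), so one of them, say $A$, satisfies $A\cap Z\subseteq\{T_\alpha:\alpha\le\xi\}$ for some $\xi<\omega_1$. Then $A$ and $B\cap R$ live in the countable open subspace $R=\omega\cup\{T_\alpha:\alpha\le\xi\}$, which is regular and first countable, hence metrizable and normal, so they can be separated there; shrinking the neighborhood of $A$ so that its trace on $\omega$ is (modulo the isolated points of $A$ itself) contained in $T_\xi$, one then adds to the other side the open ``tail'' $W=(\omega\setminus T_\xi)\cup\{T_\alpha:\alpha>\xi\}$, which covers $B\setminus R$ and misses the shrunken neighborhood of $A$. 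Supplying this (or an equivalent) argument is exactly what is missing from your proof.

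Two smaller points. First, your basic neighborhoods $U_\beta(\alpha)=(\beta,\alpha]\cup(T_\beta\setminus T_\alpha)$ cannot be used literally: since the chain is only $\subseteq^*$-decreasing (a literally $\subseteq$-decreasing $\omega_1$-chain of subsets of $\omega$ does not exist), these sets are not closed under intersection up to membership, and one cannot even separate $\alpha$ from an integer $n\in\bigcap_{\beta<\alpha}T_\beta\setminus T_\alpha$; you must allow deletion of finite subsets of $\omega$, as in the paper's basic sets $B(S,T,F)$ (and you must also say what the neighborhoods of the bottom point of $\omega_1$ are). Second, the claim $\overline{U_\beta(\alpha)}=[\beta,\alpha]\cup(T_\beta\setminus T_\alpha)$ is inaccurate --- with the corrected base these basic sets are clopen (and compact) --- but this does not affect local compactness. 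These are presentational repairs; the substantive omission is the normality argument above.
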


\begin{proof}
The space $X$ will be a subspace of a space constructed by Franklin and Rajagopalan in~\cite{FR} (see also Example 7.1 in~\cite{int}). 
Fix an increasing maximal tower $\mathcal T=\{T_{\alpha}\mid \alpha\in \kappa\}$ on $\omega$.
That is $\mathcal T\subset [\omega]^{\omega}$, $T_{\alpha}\subset^* T_{\beta}$, $|T_{\beta}\setminus T_{\alpha}|=\omega$ for any $\alpha\in\beta$, and there exists no subset $P\subset \omega$ such that $|\omega\setminus P|=\omega$ and $T_{\alpha}\subset^* P$ for all $\alpha\in \kappa$. The maximality of the tower $\mathcal T$ implies that the ordinal $\kappa$ is uncountable.
Consider the space $Y(\mathcal T)=\mathcal T\cup\omega$ which is topologized as follows. Points of $\omega$ are isolated and a basic open neighborhood of $T\in \mathcal T$ has the form
$$B(S,T,F)=\{P\in\mathcal T\mid S\subset^* P\subseteq^* T\}\cup ((T\setminus S)\setminus F),$$
where $S\in \mathcal T\cup\{\emptyset\}$ satisfies $S\subset^{*}T$ and $F$ is a finite subset of $\omega$. 
By Example 7.1 from~\cite{int}, $Y(\mathcal T)$ is separable normal locally compact and sequentially compact. Note that the subspace $\mathcal T$ of $Y(\mathcal T)$ is homeomorphic to the ordinal $\kappa$.
Let $X=\omega\cup \{T_{\alpha}:\alpha\in\omega_1\}$ be a subspace of $Y(\mathcal T)$. It is easy to see that the subspace $Z=\{T_{\alpha}:\alpha\in\omega_1\}$ of $X$ is homeomorphic to the cardinal $\omega_1$. Lemma~\ref{club} implies that the subspace $Z$ possesses a free $\omega_1$-complete closed  ultrafilter $\F$. Since the set $Z$ is closed in $X$, the filter $\F$ generates a free $\omega_1$-complete closed  ultrafilter on $X$. Since $\omega$ is a dense subset of $X$, the space $X$ is separable. Being an open subspace of a locally compact space $Y$, the space $X$ remains locally compact. First-countability of $X$ follows from the definition of topology on $Y$. Consider any two closed disjoint subsets $A,B$ of $X$. If $|A|=|B|=\omega_1$, then $A\cap Z$ and $B\cap Z$ are closed unbounded. It follows that $A\cap B\neq \emptyset$ which contradicts our assumption. Hence without loss of generality we can assume that the set $A$ is countable. Let $\xi=\sup\{\alpha\in\omega_1: T_{\alpha}\in A\}$ and $R=\omega\cup\{T_{\alpha}:\alpha\leq \xi\}$. Put $C= B\cap R$. Clearly, $C$ and $A$ are disjoint closed subsets of an open countable subset $R$ of $X$. Hence there exist disjoint open subsets $U(A)$ and $U(C)$ of $R$ such that $A\subset U(A)$ and $C\subset U(C)$. Moreover, the definition of the topology on $X$ implies that we lose no generality assuming that $(U(A)\cup U(C))\cap \omega\subset T_{\xi}$. It is easy to check that the set $W=(\omega\setminus T_{\xi})\cup \{T_{\alpha}: \alpha>\xi\}$ is open. Then the open sets $U(A)$ and $U(B)=U(C)\cup W$ are disjoint and contain the sets $A$ and $B$, respectively. Hence the space $X$ is normal.   

If $\mathfrak t=\omega_1$, then we can assume that $\kappa=\omega_1$. In this case $X=Y(\mathcal T)$ is sequentially compact and satisfies other mentioned properties (see~\cite{FR} or Example 7.1 from~\cite{int} for more details).
\end{proof}

Recall that a space $X$ is called {\em countably compact} if any countable family with the finite intersection property consisting of closed subsets of $X$ has a nonempty intersection.

\begin{lemma}\label{lcc}
Each free closed filter on a countably compact space can be extended to a free $\omega_1$-complete closed ultrafilter. 
\end{lemma}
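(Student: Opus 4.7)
The plan is to first extend $\F$ to a maximal closed filter $\F^*$ by a routine Zorn's lemma argument (the union of a chain of closed filters extending $\F$ is again a closed filter, since one can take for its base the union of the closed bases). By the remark in the introduction that an $R$ ultrafilter is the same as a maximal $R$ filter whenever $R$ is closed under finite intersections, $\F^*$ is automatically a closed ultrafilter. It remains to verify that $\F^*$ is free and $\omega_1$-complete.

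Freeness is immediate: every $x\in X$ has an open neighborhood $U$ with $X\setminus U\in\F$, and $\F\sbst\F^*$, so $X\setminus U\in\F^*$.

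For $\omega_1$-completeness, which is where countable compactness enters, I would argue as follows. Fix a countable family $\{F_n:n\in\w\}\sbst\F^*$ and set $G=\bigcap_{n\in\w}F_n$, which is closed in $X$. Suppose toward a contradiction that $G\notin\F^*$. Since $\F^*$ is a closed ultrafilter, there exists a closed $F\in\F^*$ with $F\cap G=\emptyset$. Then the countable family $\{F\}\cup\{F_n:n\in\w\}$ consists of closed subsets of $X$ all belonging to $\F^*$, so it has the finite intersection property; however its total intersection $F\cap G$ is empty, contradicting the countable compactness of $X$. Hence $G\in\F^*$, as required.

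I do not anticipate a genuine obstacle; the only content beyond Zorn is the observation that in a countably compact space every closed ultrafilter is automatically $\omega_1$-complete, which is forced by combining the ultrafilter dichotomy for closed sets with the finite intersection property formulation of countable compactness.
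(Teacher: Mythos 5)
Your proof is correct and takes essentially the same route as the paper: extend to a (automatically free) closed ultrafilter and obtain $\omega_1$-completeness from countable compactness plus maximality — the paper packages the last step by noting that countable intersections generate a closed filter which must equal the ultrafilter, while you get the same conclusion via the dichotomy for closed sets. One small point to tidy: elements $F_n\in\F^*$ need not themselves be closed (a closed filter is only required to have a closed base), so first replace each $F_n$ by a closed member of $\F^*$ contained in it; then $G$ is indeed closed and your contradiction with countable compactness goes through verbatim.
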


\begin{proof}
Let $X$ be a countably compact space possessing a free closed filter $\F'$. Enlarge $\F'$ to a free closed ultrafilter $\F$. By the countable compactness of $X$, the family $\{\bigcap H: H\in [\F]^{\leq \omega}\}$ generates an $\omega_1$-complete free closed filter $\H$. Taking into account the maximality of $\F$ and the inclusion $\F\subset \mathcal H$, we obtain that $\F=\mathcal H$ is an $\omega_1$-complete free closed ultrafilter. 
\end{proof}

%Lemma~\ref{lcc} does not hold for pseudocompact spaces. Consider Mrowka space.

The {\em club principle} is the following consistent with ZFC statement: There is a sequence $\{A_{\alpha}: \alpha<\omega_1, \alpha$ limit$\}$ such that for every $\alpha$, $A_{\alpha}$ is cofinal in $\alpha$ and for every uncountable $X\subset \omega_1$ the set $\{\alpha\in \omega_1: A_{\alpha}\subset X\}$ is not empty. The club principle was introduced by Ostaszewski~\cite{Ost} and usually is denoted by $\clubsuit$. 

According to \cite{ER} an uncountable space $X$ is called {\em sub-Ostaszewski} if every open subset of $X$ is either countable or co-countable. By \cite[Proposition 2.10]{ER} each sub-Ostaszewski space $X$ has size $\omega_1$. Proposition 2.2 from \cite{ER} implies that at most one point in $X$ has no countable open neighborhood. Note that the subspace $Y=\{x\in X: x$ possesses a countable open neighborhood$\}$ of $X$ is not Lindel\"of, witnessing that $X$ is not hereditary Lindel\"of. By~\cite[Proposition 2.4]{ER}, any sub-Ostaszewski space is hereditary separable. Recall that a hereditary separable regular space, which is not hereditary Lindel\"of, is called an {\em S-space}~\cite{Roitman}. By the renowned result of Todor\v{c}evi\'c~\cite{Tod}, there is a model of ZFC which contains no S-spaces and, as a consequence, no regular sub-Ostaszewski spaces.  %and has cardinality $\omega_1$. 
%By Lemma~\ref{Lind}, no Lindel\"of space possesses a free $\omega_1$-complete filter. 
%So, it is consistent with ZFC that there exist no regular sub-Ostaszewski spaces. 

%Note that every Hausdorff sub-Ostaszewski space is locally countable in all but possible one point. Removing the point we get a locally countable sub-Ostaszewski space. By~\cite{ER}, any sub-Ostaszewski space has cardinality $\omega_1$ 

\begin{lemma}\label{club1}
($\clubsuit$) There exists a perfectly normal hereditary separable first-countable locally compact space $X$ which possesses a free $\omega_1$-complete filter $\F$ which is simultaneously closed $G_{\delta}$, $F_{\sigma}$ and Borel ultrafilter.
\end{lemma}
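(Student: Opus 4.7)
The plan is to take $X$ to be the classical Ostaszewski space from \cite{Ost} and let $\F$ be the filter of co-countable subsets of $X$. Under $\clubsuit$, Ostaszewski's construction delivers a space built on $\omega_1$ which is perfectly normal, hereditarily separable, first-countable, locally compact, and countably compact; in addition, it is sub-Ostaszewski in the sense recalled before the lemma (every open subset is countable or co-countable), and each point admits a countable open neighborhood, since basic neighborhoods of an ordinal $\alpha$ are arranged during the construction to lie inside $\alpha+1$.

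Next I would set $\F=\{A\subseteq X:X\setminus A\text{ is countable}\}$. This is manifestly a filter on $X$, and $\omega_1$-completeness amounts to the fact that a countable union of countable sets is countable. Freeness is immediate: each $x\in X$ has a countable open neighborhood $U_x$ whose complement is co-countable, hence in $\F$.

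The heart of the argument is to observe that the collection $\mathcal{S}$ of subsets of $X$ that are either countable or co-countable forms a $\sigma$-algebra: it is trivially closed under complements, and closed under countable unions because a countable union of countable sets is countable while a countable union involving any co-countable set is co-countable. The sub-Ostaszewski property puts every open subset of $X$ into $\mathcal{S}$, so the entire Borel $\sigma$-algebra of $X$ is contained in $\mathcal{S}$. Since $|X|=\omega_1$, no set and its complement can both be countable, so for every Borel set $B$ exactly one of $B$, $X\setminus B$ is co-countable and hence belongs to $\F$. This shows $\F$ is a Borel ultrafilter at once, and \emph{a fortiori} a closed, $G_\delta$, and $F_\sigma$ ultrafilter, as required.

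The main obstacle is really only careful bookkeeping to confirm that Ostaszewski's construction simultaneously yields perfect normality, hereditary separability, first countability, local compactness, the sub-Ostaszewski dichotomy, and local countability; this is standard and documented in \cite{Ost} and \cite{ER}. Once that is in hand, the $\sigma$-algebra collapse trivializes the ultrafilter verification for all four Borel classes in one stroke, and no separate inductive construction of the filter is needed.
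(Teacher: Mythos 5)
Your construction is the paper's own proof in all essentials: the same Ostaszewski-type space obtained from $\clubsuit$, and the same filter (on $\omega_1$ the co-countable filter coincides with the filter generated by the tails $\omega_1\setminus\xi$, since countable subsets of $\omega_1$ are bounded), with the ultrafilter property reduced to the observation that in a sub-Ostaszewski space every Borel set is countable or co-countable.

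Two points need attention. First, in the paper's terminology an $R$ ultrafilter must in particular be an $R$ filter, i.e.\ possess a base of sets with property $R$; so deciding every Borel set does not \emph{a fortiori} make $\F$ a closed, $G_{\delta}$ or $F_{\sigma}$ ultrafilter. You must add the (easy) verification that the tails $\omega_1\setminus\xi$ are closed --- this is precisely where the fact that initial segments are open in the Ostaszewski construction is used --- so that $\F$ is a closed and hence $F_{\sigma}$ filter, and that each co-countable set is $G_{\delta}$ because its complement is a countable union of singletons, so $\F$ is a $G_{\delta}$ (and Borel) filter. All the ingredients are already in your write-up (you note that each $\alpha$ has a neighborhood inside $\alpha+1$); only this link is missing. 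Second, your claim that the $\clubsuit$-construction is countably compact is unjustified and should be dropped: as the paper remarks after the lemma, countable compactness of this example requires $\diamondsuit$ rather than $\clubsuit$, and it plays no role in the argument.
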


\begin{proof}
In~\cite{Ost} it was proven that assuming $\clubsuit$ there exists a topology $\tau$ on $\omega_1$ such that $X=(\omega_1,\tau)$ is a perfectly normal first-countable locally compact sub-Ostaszewski space. Moreover, for any $\xi\in\omega_1$ the initial segment $\{\alpha: \alpha\leq \xi\}$ is open in $X$. It is straightforward to check that every Borel subset of  any sub-Ostaszewski space is either countable or co-countable.

%For each $\xi\in\omega_1$ put $[\xi,\omega_1)=\{\alpha\in\omega_1:\alpha\geq \xi\}$.
Consider the filter $\F$ generated by the family $\{\omega_1\setminus \xi: \xi\in\omega_1\}$. Clearly, the filter $\F$ is $\omega_1$-complete. Since initial segments are open in $X$, the filter $\F$ is free and closed. Consequently, the filter $\F$ is $F_{\sigma}$ and Borel.  Since the complement to each element of $\F$ is countable, $\F$ is a $G_{\delta}$ filter. Let us show that $\F$ is simultaneously closed, $F_{\sigma}$, $G_{\delta}$ and Borel ultrafilter.

Pick any Borel set $A\subset X$ such that $A\notin \F$. Since any Borel set in $X$ is either countable or co-countable, we get that the set $A$ is countable. Then $\omega_1\setminus (\sup A+1)$ is an element of $\F$ disjoint with $A$. Hence $\F$ is simultaneously closed, $F_{\sigma}$, $G_{\delta}$ and Borel ultrafilter. %Lemma~\ref{Fs} implies that $\F$ is an $F_{\sigma}$ ultrafilter. Note that $\overline{\F}=\F$. Since the space $X$ is perfectly normal, Lemma~\ref{Gd} implies that $\F$ is a $G_{\delta}$ ultrafilter.  
\end{proof}

% which admit a free $\omega_1$-complete filter. 

The example constructed in the proof of Lemma~\ref{club1} can be made countably compact~\cite{Ost}. But for this we need to assume $\diamondsuit$ instead of $\clubsuit$. For more information about $\diamondsuit$ see Chapter II.7 of~\cite{Kun}. Thus, we get the following:

\begin{proposition}\label{perf}
($\diamondsuit$) There exists a hereditary separable perfectly normal first-countable locally compact countably compact space which possesses a free $\omega_1$-complete filter $\F$ which is simultaneously closed $G_{\delta}$, $F_{\sigma}$ and Borel ultrafilter.
\end{proposition}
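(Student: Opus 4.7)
The plan is to invoke Ostaszewski's original construction from \cite{Ost} under $\diamondsuit$ and then verify that the filter considered in the proof of Lemma~\ref{club1} continues to do the job. Ostaszewski proved that $\diamondsuit$ yields a topology $\tau$ on $\omega_1$ such that $X=(\omega_1,\tau)$ is a perfectly normal, first-countable, locally compact, countably compact, sub-Ostaszewski space in which each initial segment $\{\alpha:\alpha\leq\xi\}$ is open. Since $X$ is sub-Ostaszewski, it is hereditarily separable by \cite[Proposition 2.4]{ER}, so all the topological properties demanded in the statement are taken care of at once.

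Next, I would let $\F$ be the filter on $X$ generated by the family $\{\omega_1\setminus\xi:\xi\in\omega_1\}$ of co-initial-segments. Because initial segments are open, these generators are closed, so $\F$ is a closed filter; as their complements are countable, the generators are also $G_\delta$ (hence Borel, hence $F_\sigma$), and $\F$ is obviously $\omega_1$-complete and free.

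To verify that $\F$ is an $R$-ultrafilter for each of the properties $R\in\{\text{closed},G_\delta,F_\sigma,\text{Borel}\}$, I would repeat the argument from the proof of Lemma~\ref{club1} essentially verbatim. Since every Borel subset of a sub-Ostaszewski space is either countable or co-countable, any Borel set $A\notin\F$ must be countable; then $\omega_1\setminus(\sup A+1)$ is an element of $\F$ disjoint from $A$. This simultaneously establishes maximality of $\F$ among closed, $G_\delta$, $F_\sigma$ and Borel filters.

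There is no serious obstacle here: the proposition is really a strengthening of Lemma~\ref{club1} in which $\clubsuit$ is replaced by $\diamondsuit$ in order to import the extra conclusion of countable compactness from Ostaszewski's construction. The only point that needs a small comment is that Ostaszewski's space under $\diamondsuit$ retains all the features used in Lemma~\ref{club1} (openness of initial segments, the sub-Ostaszewski property, perfect normality, first countability, local compactness, hereditary separability), so the analysis of $\F$ transfers without change.
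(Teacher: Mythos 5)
Your proposal matches the paper's own treatment: the paper likewise obtains the proposition by noting that under $\diamondsuit$ Ostaszewski's construction yields a countably compact version of the space used in Lemma~\ref{club1}, with the same filter generated by co-initial-segments serving as the desired ultrafilter. Your verification that the Lemma~\ref{club1} argument transfers unchanged is exactly the intended (and correct) reasoning.
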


Weiss~\cite{W} showed that MA implies that every perfectly normal countably compact space is compact. Moreover, Nyikos and Zdomskyy~\cite{NZ} proved that under PFA every first-countable separable normal countably compact space is compact. Thus, Proposition~\ref{perf} cannot be proved within ZFC.

%We finish this chapter with an auxiliary proposition about the character of open ultrafilters.

%We start with a simple but useful lemma.

%By Corollary 2.4 from~\cite{Burke}, every paracompact space is screened. Since metrizable spaces are paracompact we obtain the following.
%\begin{corollary}
%There exists a metrizable space $X$ possessing a free $\omega_1$-complete closed ultrafilter if and only if there exists a measurable cardinal.
%\end{corollary}

\section{General facts about the poset of free open filters on a given space}\label{3}
%In this section we investigate properties of a poset $\mathbf{OF}(X)$. 
%From this point on we assume all spaces to be Hausdorff.
%Most of results of this section are auxiliary and they will be used in the next ones. 
We start with the following auxiliary lemma.

\begin{lemma}\label{unew}
An open filter $\F$ on a space $X$ is an open ultrafilter if and only if for any $F\in\F$ and open subset $U\subset F$ either $U\in \F$ or $F\setminus\overline{U}\in \F$.
\end{lemma}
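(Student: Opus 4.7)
The plan is to exploit the two key characterizations of being an open ultrafilter that follow directly from the definitions: the maximality among open filters, equivalently, the dichotomy that for every open $A\subseteq X$ one has $A\in\F$ or $X\setminus A\in\F$; and the fact that every element of $\F$ contains an open element of $\F$.

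For the implication ($\Leftarrow$), I would apply the hypothesis to an arbitrary open set $A\subseteq X$ with the choice $F=X$ and $U=A$, using that $X$ belongs to every filter on $X$. The hypothesis yields either $A\in\F$ or $X\setminus\overline{A}\in\F$, and in the latter case the inclusion $X\setminus A\supseteq X\setminus\overline{A}$ combined with upward closure gives $X\setminus A\in\F$. This is precisely the open ultrafilter dichotomy.

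For ($\Rightarrow$), fix $F\in\F$ and an open $U\subseteq F$, and suppose $U\notin\F$. The ultrafilter property applied to the open set $U$ delivers $X\setminus U\in\F$. The delicate point is that $X\setminus U$ is only closed, while the target $F\setminus\overline{U}$ is a strictly smaller set. Here the open-base property of $\F$ is essential: it provides an open $V\in\F$ with $V\subseteq X\setminus U$. Because $V$ is open and disjoint from $U$, it is actually contained in $\operatorname{int}(X\setminus U)=X\setminus\overline{U}$. Hence $V\cap F\in\F$ and $V\cap F\subseteq F\setminus\overline{U}$, so $F\setminus\overline{U}\in\F$ by upward closure.

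The main subtlety, and the step that most deserves a highlight, is the passage from $X\setminus U\in\F$ to a filter element avoiding the closure $\overline{U}$ rather than merely $U$. Using only the ultrafilter dichotomy one could conclude $F\setminus U\in\F$, but this is strictly weaker. The argument therefore genuinely uses that $\F$ is an \emph{open} filter, not just a filter that decides every open set; this is the only nontrivial ingredient in the proof.
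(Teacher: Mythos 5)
Your proof is correct and follows essentially the same route as the paper's: in ($\Rightarrow$) the open set $V\in\F$ contained in $X\setminus U$ (hence in $X\setminus\overline{U}$) is exactly the paper's witness $H$, and in ($\Leftarrow$) both arguments specialize the hypothesis to $F=X$ and use $X\setminus\overline{U}\subseteq X\setminus U$. No issues.
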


\begin{proof}
($\Rightarrow$) Assuming that $\F$ is an open ultrafilter, fix any $F\in\F$ and an open subset $U\subset F$. Then either $U\in \F$ or there exists an open set $H\in\F$ such that $H\cap U=\emptyset$. In the latter case $H\cap \overline{U}=\emptyset$, implying that $\F\ni H\cap F\subseteq F\setminus \overline{U}$. Hence $F\setminus \overline{U}\in \F$.

($\Leftarrow$) Assume that $\F$ is an open filter on $X$ such that for any $F\in\F$ and any open subset $U\subset F$ either $U\in \F$ or $F\setminus\overline{U}\in \F$. Since $X\in\F$, for any open subset $U\subset X$ either $U\in \F$ or $X\setminus\overline{U}\in \F$. Then for any open set $U\notin\F$, the set $X\setminus \overline{U}$ is an element of $\F$ disjoint with $U$. Hence $\F$ is an open ultrafilter.
\end{proof}

Lemma~\ref{unew} implies that every open ultrafilter on a space $X$ contains all dense open subsets of $X$.

The set $\mathbf{H}(X)$ of all H-extensions of a space $X$ carries a natural partial order $\leq$ defined as follows: for any $Y,Z\in \mathbf{H}(X)$, $Y\leq Z$ if there exists a continuous surjection $f:Z\rightarrow Y$ such that $f(x)=x$ for any $x\in X$. It can be checked~\cite{M} that $\mathbf{H}(X)$ is a complete upper semilattice with respect to this order, i.e. each subset $A\subset \mathbf{H}(X)$ has supremum in $\mathbf{H}(X)$. Order properties of compactifications and H-extensions were investigated in~\cite{CF, K1, MRW, Men, MR, R}.

%Next we recall the definition of the Katetov H-extension.

\begin{definition}\label{Katetov}
The Katetov H-extension $K(X)$ of a space $X$ is the set 
 $$X\cup \{\F:\F\hbox{ is a free open ultrafilter on }X\}$$ endowed with the topology $\tau$ which satisfies the following conditions:
\begin{itemize}
\item A subset $A\subseteq X$ is open in $(K(X),\tau)$ if and only if $A$ is open in $X$;
\item open neighborhood base at $\F\in K(X)\setminus X$ consists of the sets $\{\F\}\cup F$, where $F\in\F$ is open.
\end{itemize}
\end{definition}
For any space $X$, the Katetov extension $K(X)$ is the supremum of $\mathbf{H}(X)$ and it can be considered as a generalization of \v{C}ech-Stone compactification (see~\cite{PV} or ~\cite{PVV} for more details).

A space $X$ is called 
\begin{itemize}
\item {\em locally H-closed} if each point $x\in X$ possesses an open neighborhood $U$ such that $\overline{U}$ is H-closed.
\item {\em almost H-closed} if it admits a unique free open ultrafilter.
\end{itemize}

A simple example of an almost H-closed space is the subspace $K(X)\setminus \{\F\}$ of $K(X)$, where $X$ is not H-closed, and $\F$ is a free open ultrafilter on $X$.
The following lemma is a consequence of
Theorem~4.1 from~\cite{G}.
\begin{lemma}\label{new}
Every almost H-closed space is locally H-closed.
\end{lemma}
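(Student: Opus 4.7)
The plan is to use the Katetov extension $K(X)$ from Definition~\ref{Katetov}. Let $\F$ be the unique free open ultrafilter on $X$, so $K(X) = X \cup \{\F\}$; a basic property of the Katetov construction is that $K(X)$ is itself H-closed.

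Fix $x \in X$. Using freeness of $\F$, I first choose an open neighborhood $V$ of $x$ in $X$ with $X \setminus V \in \F$; this immediately forces $V \notin \F$. Applying Lemma~\ref{unew} (with $F = X$) to the open ultrafilter $\F$ then gives $X \setminus \overline{V} \in \F$, so $\{\F\} \cup (X \setminus \overline{V})$ is a basic open neighborhood of the point $\F$ in $K(X)$ disjoint from $V$. Hence $\F \notin \overline{V}^{K(X)}$, and therefore $\overline{V}^{K(X)} = \overline{V}^{X}$ as subspaces of $K(X)$.

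Finally, I invoke the classical fact (a version of which is Theorem~4.1 of~\cite{G}) that the closure of an open set in an H-closed space is itself H-closed. Applied to $V$ inside $K(X)$, this yields that $\overline{V}^X$ is H-closed, which is precisely local H-closedness of $X$ at $x$. For a self-contained verification of the classical fact in this setting, given any open filter $\G$ on $\overline V^X$, density of $V$ in $\overline V^X$ makes $\{G \cap V : G \in \G\}$ a nonempty open filter base on the H-closed space $K(X)$; its cluster point cannot be $\F$, since $X \setminus \overline V \in \F$ but $(X \setminus \overline V) \cap V = \emptyset$, so it must lie in $\overline V^X$ and clusters $\G$ there. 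Beyond this bookkeeping inside $K(X)$ I foresee no real obstacle; the only delicate point is arranging that $V \notin \F$ (via Lemma~\ref{unew}) so that the ambient closure in $K(X)$ coincides with the closure in $X$.
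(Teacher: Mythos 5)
Your proof is correct, but it does not follow the paper: the paper offers no argument at all for this lemma, simply recording it as a consequence of Theorem~4.1 of Girou's paper \cite{G}, whereas you give a self-contained proof. Your route — embed $X$ into its Katetov extension $K(X)=X\cup\{\F\}$, use freeness of $\F$ at $x$ together with Lemma~\ref{unew} to get $X\setminus\overline V\in\F$ and hence $\overline V^{K(X)}=\overline V^{X}$, and then invoke (and verify) the classical fact that regular closed subsets of H-closed spaces are H-closed — is sound. The one place where the almost-H-closedness is genuinely used is exactly where you flag it: with a unique free open ultrafilter there is only one point of $K(X)\setminus X$ to push out of $\overline V$, and a single $V$ suffices; with infinitely many free open ultrafilters the same trick would require intersecting infinitely many neighborhoods and fails, as it must. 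Two cosmetic remarks: your cluster-point argument shows every open filter on $\overline V^{X}$ has a cluster point, and to match the paper's definition of H-closed (namely $\mathbf{OF}(\overline V^{X})=\emptyset$) one should add the one-line observation that a free open filter cannot have a cluster point, since an open element of the filter contained in $X\setminus U$ has closure missing the point with neighborhood $U$; and the H-closedness of $K(X)$ that you use is asserted in the paper (as $K(X)$ being the supremum of $\mathbf{H}(X)$, citing \cite{PV}), so it is fair to take it for granted. What your approach buys is a proof readable entirely within the paper's own toolkit, at the cost of reproving a special case of a standard fact that the cited reference already covers in general.
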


Let $X$ be a locally H-closed non-H-closed space. By $\F_{\inf}$ we denote the filter generated by the base $\{X\setminus K: K$ is an H-closed subspace of $X\}$. Observe that $\F_{\inf}$ is well-defined, as the union of finitely many H-closed subspaces remains H-closed. 

\begin{lemma}\label{inf}
For any locally H-closed non-H-closed space $X$ the filter $\F_{\inf}$ is the infimum of $\mathbf{OF}(X)$. 
\end{lemma}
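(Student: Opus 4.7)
The plan is to establish two things: (i) that $\F_{\inf}$ is itself a free open filter on $X$, i.e.\ $\F_{\inf}\in \mathbf{OF}(X)$; and (ii) that $\F_{\inf}\subseteq \F$ for every $\F\in\mathbf{OF}(X)$, so that $\F_{\inf}$ is the minimum of the poset.

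For (i), I will first verify that the declared base $\{X\setminus K:K\text{ is H-closed in }X\}$ generates a proper filter. Closure under finite intersections uses exactly the observation already recorded in the statement, namely that a finite union of H-closed subspaces is H-closed, so $(X\setminus K_1)\cap (X\setminus K_2)=X\setminus(K_1\cup K_2)$ lies in the base. Properness follows from the assumption that $X$ is non-H-closed, so that no single H-closed subspace can equal all of $X$. Since H-closed subspaces of a Hausdorff space are automatically closed in the ambient space, every base element is open, and $\F_{\inf}$ qualifies as an open filter. Freeness is where local H-closedness enters: given $x\in X$, pick an open neighborhood $U$ of $x$ with $\overline{U}$ H-closed; then $X\setminus\overline{U}\in\F_{\inf}$, so its superset $X\setminus U$ is also in $\F_{\inf}$, witnessing freeness at $x$.

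For (ii), fix $\F\in\mathbf{OF}(X)$ and an H-closed subspace $K\subseteq X$, and target the conclusion $X\setminus K\in\F$. If some $F\in\F$ happens to be disjoint from $K$, then $F\subseteq X\setminus K$ and we are done. Otherwise, every element of $\F$ meets $K$, so the trace $\F|_K=\{F\cap K:F\in\F\}$ is a proper open filter on $K$. Here I invoke the standard characterization of H-closedness -- a Hausdorff space is H-closed iff every open filter on it has a cluster point -- to produce a cluster point $y\in K$ of $\F|_K$. Now freeness of $\F$ supplies an open neighborhood $V_y$ of $y$ in $X$ with $X\setminus V_y\in\F$; but then $V_y\cap K$ is an open (in $K$) neighborhood of $y$ that is disjoint from the trace element $(X\setminus V_y)\cap K\in\F|_K$, contradicting the clustering of $y$.

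The main obstacle, to the extent there is one, is selecting the correct characterization of H-closedness: the equivalence ``every open filter has a cluster point'' pairs perfectly with freeness of $\F$, since the two conditions pull against each other and produce the contradiction in one line. Once that idea is in place, everything else is routine; I do not expect any subtle technical difficulty, in particular because closedness of H-closed subspaces in the Hausdorff space $X$ makes the interplay between closures in $K$ and in $X$ unproblematic.
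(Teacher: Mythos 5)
Your proof is correct and follows essentially the same route as the paper: verify that $\F_{\inf}$ is a free open filter (the paper dismisses this as clear, using local H-closedness exactly as you do), and for minimality pass to the trace on an H-closed subspace $K$ that every element of the given filter meets. Your cluster-point argument is just an unfolding of the paper's one-line conclusion that this trace is a free open filter on $K$, contradicting $\mathbf{OF}(K)=\emptyset$ (freeness of a filter being equivalent to the absence of cluster points), so the two proofs coincide in substance.
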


\begin{proof}
Clearly, $\F_{\inf}$ is a free open filter on $X$. To derive a contradiction, assume that there exists a free open filter $\mathcal H$ on $X$ such that $\F_{\inf}\not\subseteq \mathcal H$. Then there exists an H-closed subset $K$ of $X$ such that $H\cap K\neq \emptyset$ for any $H\in\mathcal H$. Then the trace of $\mathcal H$ on $K$ is a free open filter, which contradicts the H-closedness of $K$.
\end{proof}

%By~\cite{P}, for each locally H-closed non-H-closed space $X$ the poset $\mathbf{H}(X)$ contains the infimum, which now is called the Obreanu-Porter extension $OP(X)$~\cite{Her}. The extension $OP(X)$ has a singleton remainder $y$ and open neighborhood base $\mathcal B(y)$ at $y$ consists of the sets $\{y\}\cup X\setminus K$, where $K$ runs over H-closed subspaces of $X$. It is worth to note that the open filter $\mathcal F_{\inf}$ on $X$ generated by the family $\{X\setminus K: K$ is an H-closed subspace of $X\}$ is the infimum of the poset $\mathbf{OF}(X)$.

\begin{lemma}\label{op1}
Let $X$ be a locally H-closed space and $\mathcal{T}$ be a nonempty subset of $\mathbf{OF}(X)$. Then $\inf \mathcal{T}=\bigcap \mathcal{T}$.
\end{lemma}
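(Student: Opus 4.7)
The plan is to show that $\bigcap\mathcal{T}$ is itself a free open filter on $X$; once this is in hand the lemma is immediate, because by construction $\bigcap\mathcal{T}$ is contained in each $\F\in\mathcal{T}$ and must contain every free open filter that is a lower bound of $\mathcal{T}$ in $\mathbf{OF}(X)$. That $\bigcap\mathcal{T}$ is a filter on $X$ is routine: it inherits closure under finite intersections and upward closure from each $\F\in\mathcal{T}$, contains $X$, and omits $\emptyset$. Freeness comes from Lemma~\ref{inf}: since $\mathcal{T}\neq\emptyset$ witnesses that $X$ is non-H-closed, $\F_{\inf}$ is defined and satisfies $\F_{\inf}\subseteq\F$ for every $\F\in\mathbf{OF}(X)$; hence $\F_{\inf}\subseteq\bigcap\mathcal{T}$, and any filter extending the free filter $\F_{\inf}$ is itself free.

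The delicate point is verifying that $\bigcap\mathcal{T}$ admits a base of open sets. My approach is the interior trick. Given $A\in\bigcap\mathcal{T}$ and any $\F\in\mathcal{T}$, the open-filter property of $\F$ supplies an open set $U_\F\in\F$ with $U_\F\subseteq A$; as $U_\F$ is open and contained in $A$, we have $U_\F\subseteq\Int A$, so upward closure of $\F$ in $P(X)$ forces $\Int A\in\F$. Since this holds for every $\F\in\mathcal{T}$, we conclude $\Int A\in\bigcap\mathcal{T}$, and therefore $\{\Int A:A\in\bigcap\mathcal{T}\}$ is the required open base.

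I expect this openness step to be the real obstacle, because possessing a base of sets with a prescribed property is not automatically preserved under intersections of filters, whereas being a filter and being free are. Note also that the hypothesis of local H-closedness enters only indirectly, through Lemma~\ref{inf}, which is what guarantees both that $\F_{\inf}$ belongs to $\mathbf{OF}(X)$ and that it sits below every member of $\mathcal{T}$. Once the openness is secured, the infimum assertion follows at once from the definition of the partial order on $\mathbf{OF}(X)$ given in the introduction.
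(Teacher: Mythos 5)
Your proof is correct and follows essentially the same route as the paper: the paper also reduces the claim to showing that $\bigcap\mathcal{T}$ is a free open filter, dismisses openness of the intersection as straightforward, and derives freeness from Lemma~\ref{inf} exactly as you do. Your interior-trick verification simply fills in the openness step the paper leaves to the reader, and it is valid.
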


\begin{proof}
It is straightforward to check that  the intersection of an arbitrary family of open filters is an open filter.
Hence to prove that $\inf \mathcal{T}=\bigcap \mathcal{T}$, it suffices to show that the open filter $\bigcap \mathcal{T}$ is free. 
%Since the space $X$ is almost H-closed, there exists a unique free open ultrafilter $\mathcal{F}_{\sup}$ which contains every element of $\mathcal T$.  
%By Lemma~\ref{new}, $X$ is locally H-closed. 
Lemma~\ref{inf} implies that the filter $\F_{\inf}$ is the infimum of $\mathbf{OF}(X)$. 
%It is easy to check that for every free open filter $\F$ on $X$ and H-closed subspace $K$ of $X$, the set $X\setminus K\in \F$. Then, taking into account that $X$ is locally H-closed, the open filter $\mathcal F_{\inf}$ generated by the family $\{X\setminus K: K$ is a H-closed subspace of $X\}$ is free and  coincides with the infimum of the poset $\mathbf{OF}(X)$. 
Hence $\mathcal F_{\inf}\subseteq \mathcal F$ for each $\F\in \mathcal T$. Since the filter $\F_{\inf}$ is free we get that so is $\bigcap \mathcal T$.
%To prove that the filter $\bigcap \mathcal T$ is open fix any $F\in \bigcap\mathcal{T}$. Since the set $\mathcal T$ consists of open filters, for each $\F\in \mathcal T$ there exists an open set $O_{\F}\in\F$ such that $O_{\F}\subset F$. Then $H=\bigcup_{\F\in \mathcal T}O_{\F}$ is open, $H\in \bigcap \mathcal T$ and $H\subset F$, witnessing that $\bigcap\mathcal T$ is a free open filter.
\end{proof}

\begin{lemma}\label{op}
Let $X$ be an almost H-closed space and $\mathcal{T}$ be a nonempty subset of $\mathbf{OF}(X)$. Then $\sup\mathcal T$ is generated by the family $\bigcup \mathcal{T}$.
\end{lemma}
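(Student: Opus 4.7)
The plan is to let $\G$ denote the family of open sets $G \subseteq X$ that contain some finite intersection $F_1 \cap \cdots \cap F_n$ of members of $\bigcup \mathcal{T}$, and then to verify in sequence that $\G \in \mathbf{OF}(X)$ and that $\G = \sup \mathcal{T}$ in this poset.

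The only substantive content is the finite intersection property of $\bigcup \mathcal{T}$, and this is exactly where almost H-closedness will be used. By hypothesis there is a unique free open ultrafilter $\U$ on $X$. A routine Zorn's lemma argument, relying on the observations that (i) the union of a chain of free open filters is again a free open filter, and (ii) since the property of being open is closed under finite intersections, every maximal open filter is an open ultrafilter (as remarked after the definition of ``$R$ ultrafilter'' in the introduction), shows that each $\F \in \mathbf{OF}(X)$ extends to a free open ultrafilter. By uniqueness, every $\F \in \mathcal{T}$ is contained in $\U$, so $\bigcup \mathcal{T} \subseteq \U$, and therefore arbitrary finite intersections of its members are nonempty. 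This immediately gives that $\G$ is a proper filter; it is open because finite intersections of open sets are open; and it is free because freeness is inherited from any single $\F \in \mathcal{T}$ (which exists, as $\mathcal{T} \neq \emptyset$): for $x \in X$, any open neighborhood $U$ of $x$ with $X \setminus U \in \F$ also satisfies $X \setminus U \in \G$.

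It remains to identify $\G$ as the supremum. Each $\F \in \mathcal{T}$ is trivially contained in $\G$ by definition, so $\G$ is an upper bound in $\mathbf{OF}(X)$. Conversely, if $\H \in \mathbf{OF}(X)$ satisfies $\F \subseteq \H$ for every $\F \in \mathcal{T}$, then $\bigcup \mathcal{T} \subseteq \H$, and since $\H$ is closed under finite intersections and supersets we obtain $\G \subseteq \H$. Hence $\sup \mathcal{T} = \G$.

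I do not anticipate a genuine obstacle: the only delicate point is the application of Zorn's lemma used to deduce that every member of $\mathbf{OF}(X)$ sits inside $\U$, and once this has been noted the rest of the argument is pure bookkeeping about what it means to be a filter generated by a union of filters.
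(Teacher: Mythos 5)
Your proof is correct and follows essentially the same route as the paper's: establish the finite intersection property of $\bigcup\mathcal T$ from the fact that every member of $\mathcal T$ lies below the unique free open ultrafilter, and then check that the filter generated by $\bigcup\mathcal T$ is free, open, and is an upper bound contained in every upper bound. You are in fact slightly more explicit than the paper, which simply invokes the supremum of $\mathbf{OF}(X)$ where you spell out the Zorn's-lemma extension to the unique ultrafilter; the only cosmetic quibble is that your $\G$, consisting of open sets only, is a base for the generated filter rather than the filter itself.
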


\begin{proof}
Let $\F_{\sup}$ be the supremum of $\mathbf{OF}(X)$.
Since for each $\F\in \mathcal T$, $\F\subset\mathcal F_{\sup}$, the family $\bigcup \mathcal{T}$ has the finite intersection property. Let $\mathcal H$ be the filter generated by the family $\bigcup \mathcal{T}$. Since each filter $\F\in \mathcal T$ is free and $\F\subset \mathcal H$, the filter $\mathcal H$ is free as well. It remains to show that the filter $\mathcal H$ is open. For this, fix any $H\in \mathcal H$. It follows that there exist $\F_0,\ldots, \F_{n-1}\in \mathcal T$ and elements $F_{i}\in \mathcal F_{i}$, $i\in n$ such that $\bigcap_{i\in n}F_{i}\subseteq H$. Since the filters $\F_i$, $i\in n$ are open, for every $i\in n$ there exists an open set $O_{i}\in\F_{i}$ such that $O_i\subset F_i$. Observe that the open set $\bigcap_{i\in n}O_{i}\subseteq H$ belongs to $\mathcal H$.
\end{proof}

Since the increasing union of filters is a filter, Lemma~\ref{op} implies the following.

\begin{corollary}\label{uni}
Let $\mathcal{T}$ be a nonempty linearly ordered subset of $\mathbf{OF}(X)$. Then  $\sup\mathcal T=\bigcup \mathcal{T}$.
\end{corollary}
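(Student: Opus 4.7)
The plan is to reduce Corollary~\ref{uni} to Lemma~\ref{op} by verifying that, in the linearly ordered case, no generation step is needed: the union $\bigcup\mathcal{T}$ is already a filter, so the open filter it generates coincides with it.

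First I would verify that $\bigcup\mathcal{T}$ is a filter on $X$. It is nonempty because $\mathcal{T}$ is nonempty and each $\F\in\mathcal{T}$ contains $X$. Upward closure is inherited element-by-element from each $\F\in\mathcal{T}$. For closure under binary intersections, suppose $A,B\in\bigcup\mathcal{T}$; pick $\F_A,\F_B\in\mathcal{T}$ with $A\in\F_A$ and $B\in\F_B$. Since $\mathcal{T}$ is linearly ordered by $\subseteq$, without loss of generality $\F_A\subseteq\F_B$, so $A,B\in\F_B$ and hence $A\cap B\in\F_B\subseteq\bigcup\mathcal{T}$. Properness ($\emptyset\notin\bigcup\mathcal{T}$) follows from the properness of each individual $\F\in\mathcal{T}$.

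Next, I would invoke Lemma~\ref{op}, which asserts that $\sup\mathcal{T}$ is the filter generated by $\bigcup\mathcal{T}$. Since the previous step shows that $\bigcup\mathcal{T}$ is already a filter, the generated filter is literally $\bigcup\mathcal{T}$ itself, yielding $\sup\mathcal{T}=\bigcup\mathcal{T}$. The open and free properties come for free along this reduction: each $U\in\bigcup\mathcal{T}$ lies in some $\F\in\mathcal{T}$, which is open, so $U$ contains an open member of $\F\subseteq\bigcup\mathcal{T}$; and freeness follows because $\F_{\inf}\subseteq\F\subseteq\bigcup\mathcal{T}$ for any $\F\in\mathcal{T}$, using Lemma~\ref{inf}.

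There is no real obstacle here; the only thing worth flagging is that the corollary relies on the ambient hypothesis of Lemma~\ref{op} (namely, that $X$ is almost H-closed, so that $\sup\mathcal{T}$ admits the explicit description supplied by that lemma). Once this is in place, the argument is entirely formal.
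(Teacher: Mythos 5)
Your proof is correct and follows exactly the paper's route: the paper's own justification is the one-line observation that an increasing union of filters is a filter, combined with Lemma~\ref{op}. Your added detail (checking closure under intersections via linearity, and flagging the ambient almost H-closed hypothesis needed for Lemma~\ref{op}) is a faithful expansion of the same argument.
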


%\begin{proof}
%By Lemma~\ref{op}, it suffices to show that $\bigcup \mathcal{T}$ is a filter. Fix any finite subset $\{F_i:i\in n\}\subset \bigcup \mathcal{T}$. By the linearity of $\mathcal T$, there exists $\F\in\mathcal T$ such that $\{F_i:i\in n\}\subset \F$. It follows that $\bigcap_{i\in n}F_i\in \F\subset \bigcup \mathcal{T}$. Hence $\bigcup \mathcal{T}$ is a filter.
%\end{proof}

A lattice $(L,\vee,\wedge)$ is called {\em distributive} if 
$x\wedge(y\vee z)=(x\wedge y)\vee(x\wedge z)$. It is well-known that the latter condition is equivalent to its dual: 
$x\vee(y\wedge z)=(x\vee y)\wedge(x\vee z)$.

\begin{theorem}\label{char}
For a Hausdorff space $X$ the following conditions are equivalent:
\begin{enumerate}
\item $X$ is almost H-closed;
\item the posets $\mathbf{H}(X)$ and $\mathbf{OF}(X)$ are order isomorphic;
\item the poset $\mathbf{OF}(X)$ is a complete distributive lattice;
\item the poset $\mathbf{OF}(X)$ is a lattice.
\end{enumerate}
\end{theorem}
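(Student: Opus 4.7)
The plan is to establish the cycle $(1)\Rightarrow(3)\Rightarrow(4)\Rightarrow(1)$ together with $(1)\Rightarrow(2)\Rightarrow(1)$, which gives all four equivalences.

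For $(1)\Rightarrow(3)$, since an almost H-closed space is locally H-closed by Lemma~\ref{new}, Lemmas~\ref{op1} and~\ref{op} supply infima (given by intersections) and suprema (generated by unions) for arbitrary nonempty subfamilies of $\mathbf{OF}(X)$, making it a complete lattice. For distributivity I would verify $\F\wedge(\G\vee\H)=(\F\wedge\G)\vee(\F\wedge\H)$ directly: given $A\in\F$ with $G\cap H\subseteq A$ where $G\in\G$, $H\in\H$ are open, the identity $A=(A\cup G)\cap(A\cup H)$ expresses $A$ as an intersection of open members of $\F\cap\G$ and $\F\cap\H$, so $A\in(\F\wedge\G)\vee(\F\wedge\H)$. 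The implication $(3)\Rightarrow(4)$ is immediate.

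For $(4)\Rightarrow(1)$: a lattice is nonempty, so $X$ is not H-closed and admits at least one free open ultrafilter. Suppose two distinct free open ultrafilters $\F_1\neq\F_2$ exist. Pick open $U\in\F_1\setminus\F_2$; by maximality of $\F_2$ together with Lemma~\ref{unew}, some open $V\in\F_2$ is disjoint from $U$. Any supremum of $\F_1$ and $\F_2$ in $\mathbf{OF}(X)$ would have to contain both $U$ and $V$, which is impossible. Hence the free open ultrafilter is unique.

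For $(1)\Rightarrow(2)$, let $\F$ be the unique free open ultrafilter. For each $\F'\in\mathbf{OF}(X)$ define $\Phi(\F'):=Y_{\F'}$, where $Y_{\F'}=X\cup\{y_{\F'}\}$ carries the topology in which $X$ is open and basic open neighborhoods of $y_{\F'}$ are the sets $\{y_{\F'}\}\cup F$ for open $F\in\F'$. Freeness of $\F'$ gives Hausdorffness of $Y_{\F'}$, and I would verify its H-closedness by ruling out any free open ultrafilter $\mathcal U$ on $Y_{\F'}$: its trace on $X$ would be a free open filter containing $X\setminus F$ for every open $F\in\F'$, forcing its unique ultrafilter extension on $X$ to be $\F$, which contradicts $\F'\subseteq\F$. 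To see every H-extension $Y$ has the form $Y_{\F'}$, note that if there were two distinct points $y_1,y_2\in Y\setminus X$, Hausdorffness in $Y$ would separate them by opens whose traces on $X$ would be disjoint members of two free open filters both contained in $\F$, impossible. Order-preservation in both directions is a direct check: $\F'\subseteq\F''$ is equivalent to the continuity of the surjection $Y_{\F''}\to Y_{\F'}$ fixing $X$ and sending $y_{\F''}\mapsto y_{\F'}$. Finally $(2)\Rightarrow(1)$ follows because $\mathbf{H}(X)$ is a complete upper semilattice (so pairwise suprema always exist), while the argument from $(4)\Rightarrow(1)$ shows two distinct free open ultrafilters in $\mathbf{OF}(X)$ block common upper bounds; the case $X$ H-closed is excluded since then $\mathbf{OF}(X)=\emptyset$ while $X\in\mathbf{H}(X)$.

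The main obstacle is the $(1)\Rightarrow(2)$ step: proving H-closedness of $Y_{\F'}$ and the completeness of the description of H-extensions both rely essentially on uniqueness of the free open ultrafilter, and this is where the content of the theorem lives. The other implications reduce to routine lattice-theoretic manipulations once the preceding lemmas are in hand.
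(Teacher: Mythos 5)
Your proposal is correct, and its skeleton is the same as the paper's: uniqueness of the free open ultrafilter forcing one-point remainders, joins generated by unions and meets given by intersections (Lemmas~\ref{op1} and~\ref{op}), and disjoint members of two distinct free open ultrafilters blocking any common upper bound for $(4)\Rightarrow(1)$. The differences are in how two supporting steps are discharged. The paper goes $(2)\Rightarrow(3)$ and obtains completeness of $\mathbf{OF}(X)$ by citing Herrlich's result that the one-point H-closed extensions form a complete sublattice of $\mathbf{H}(X)$, whereas you prove $(1)\Rightarrow(3)$ directly and get completeness from Lemmas~\ref{op1} and~\ref{op} alone (every nonempty subfamily has an infimum and a supremum in the nonempty poset $\mathbf{OF}(X)$), which makes the argument self-contained; the distributivity computation is the same in both, your identity $A=(A\cup G)\cap(A\cup H)$ being exactly the paper's equality of the generating families $\{F\cup(G\cap H)\}$ and $\{(F\cup G)\cap(F\cup H)\}$. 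In $(1)\Rightarrow(2)$ you build the map from $\mathbf{OF}(X)$ to $\mathbf{H}(X)$ rather than the reverse, and, a genuine plus, you actually verify that the one-point extension $Y_{\F'}$ is H-closed using almost H-closedness of $X$; the paper's surjectivity step simply asserts $Y\in\mathbf{H}(X)$ at this point. Your $(2)\Rightarrow(1)$ uses the complete-upper-semilattice property of $\mathbf{H}(X)$ instead of the Katetov extension being its top, and also disposes of the case $\mathbf{OF}(X)=\emptyset$ (then $X$ is H-closed while $X\in\mathbf{H}(X)$), which the paper leaves implicit. One cosmetic slip: in the H-closedness argument the trace need only contain $X\setminus F$ for \emph{some} open $F\in\F'$ (which is exactly what freeness of the hypothetical ultrafilter at the added point provides), not for every $F\in\F'$ as you state; since your contradiction uses only the weaker fact, nothing is lost.
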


\begin{proof}
(1)$\Rightarrow$(2). Let us show that any proper extension of $X$ has a singleton reminder. To derive a contradiction, assume that $Y$ is an extension of $X$ such that $Y\setminus X$ contains two distinct points $a,b$. By Zorn's Lemma, the traces of the filters $\mathcal{N}(a)$ and $\mathcal{N}(b)$ on $X$ can be enlarged to open ultrafilters $\F_a$ and $\F_b$, respectively. Since the space $Y$ is Hausdorff, there exists $U_a\in \mathcal{N}(a)$ and $U_b\in\mathcal{N}(b)$ such that $U_a\cap U_b=\emptyset$. Therefore, $\F_a$ and $\F_b$ are distinct filters, which contradicts the almost H-closedness of $X$. Define $\phi: \mathbf{H}(X)\rightarrow \mathbf{OF}(X)$ by $\phi(Y)=\F_y$, where $Y\setminus X=\{y\}$ and $\F_y$ is the trace of the filter $\mathcal{N}(y)$ (whose base consists of open neighborhoods of $y$) on $X$. Clearly, the filter $\F_y$ is open and free, witnessing that the map $\phi$ is well-defined.
At this point it is straightforward to check that the map $\phi$ is injective. Let us check that $\phi$ is an order homomorphism. Fix any two H-extensions $Y_1$ and $Y_2$ of $X$ such that $Y_1\leq Y_2$. Let $\{y_1\}=Y_1\setminus X$ and $\{y_2\}=Y_2\setminus X$. There exists a continuous surjective map $h: Y_2\rightarrow Y_1$ fixing elements of $X$. Obviously, $h(y_2)=y_1$. Since the map $h$ is continuous, $\phi(Y_1)\subseteq \phi(Y_2)$. Hence $\phi$ is an injective order homomorphism between posets $\mathbf{H}(X)$ and $\mathbf{OF}(X)$.
Fix any free open filter $\F$ on $X$. Let $\tau$ be the topology on the set $X\cup\{\F\}$ which satisfies the following conditions:
\begin{itemize}
\item A subset $A\subseteq X$ is open in $(X\cup\{\F\},\tau)$ if and only if $A$ is open in $X$;
%\item $X$ is an open subspace of $(X\cup\{\F\},\tau)$;
\item the family $\{\{\F\}\cup F: F\in \F$ and $F$ is open$\}$ forms a base at $\F$.
\end{itemize}
Then $Y=(X\cup\{\F\},\tau)\in \mathbf{H}(X)$ and $\phi(Y)=\F$ which implies that the map $\phi$ is surjective.

(2)$\Rightarrow$ (1).  Assume that the space $X$ possesses two distinct free open ultrafilters $\F_1$, $\F_2$. Then there exist $F_1\in\F_1$ and $F_2\in \F_2$ such that $F_1\cap F_2=\emptyset$. It follows that there exists no open filter $\mathcal H$ on $X$ such that $\F_1\subset \H$ and $\F_2\subset \H$. Consequently, the poset $\mathbf{OF}(X)$ does not have a supremum. On the other hand, the Katetov extension $K(X)$ is the supremum of $\mathbf{H}(X)$ (see~\cite{PV}). Hence the posets $\mathbf{OF}(X)$ and $\mathbf{H}(X)$ cannot be order isomorphic.

(2)$\Rightarrow$(3). Since (1)$\Leftrightarrow$(2) the space $X$ is almost H-closed. Lemma~\ref{new} implies that $X$ is locally H-closed.  By the previous arguments, we have that each H-closed extension of $X$ has a singleton remainder. Proposition 1 from~\cite{Her} provides that the set of all H-closed extensions with a singleton remainder of a space $X$ forms a complete sublattice of $\mathbf{H}(X)$. Hence the lattice $\mathbf{OF}(X)$ is complete. Consider any free open filters $\mathcal F$, $\mathcal G$, $\mathcal H$ on $X$. Lemmas~\ref{op1} and~\ref{op} imply that the filter $\mathcal F\wedge(\mathcal G\vee\mathcal H)$ is generated by the family $\mathcal A=\{F\cup(G\cap H):F\in\mathcal F, G\in\mathcal G, H\in\mathcal H\}$, and the filter $(\mathcal F\wedge \mathcal G)\vee(\mathcal F\wedge \mathcal H)$ is generated by the family $\mathcal B=\{(F\cup G)\cap(F\cup H):F\in\mathcal F, G\in\mathcal G, H\in\mathcal H\}$. Obviously, $\mathcal A=\mathcal B$ witnessing that the lattice $\mathbf{OF}(X)$ is distributive.

%(2)$\Rightarrow$(3). Lemma~\ref{lat} ensures that the posets $H_1(X)$ and $\mathrm{OF(X)}$ are complete lattices.

%(1)$\Rightarrow$(2). Since the space $X$ is almost H-closed, all extensions of $X$ are H-closed and have a singleton reminder. Fix any such H-closed %extension $Y=X\cup\{y\}$. For each $x\in X$ there exists an open neighborhood $U$ of $x$ such that $y\notin \overline{U}$. Since the closure of open %set in the H-closed space is H-closed we get that $\overline{U}$ is an H-closed neighborhood of $x$. Hence $X$ is locally H-closed. By %Lemma~\ref{lat}, the poset $H_1(X)$ is a complete lattice. It remains to observe that $\mathrm{OF(X)}$ is isomorphic to the poset $H_1(X)$.

The implication (3)$\Rightarrow$(4) is trivial.

(4)$\Rightarrow$(1). Assume that $\mathbf{OF}(X)$ is a nonempty lattice and nonetheless $X$ admits two distinct free open ultrafilters $\F_1, \F_2$. Then there exists $F_1\in\F_1$ and $F_2\in\F_2$ such that $F_1\cap F_2=\emptyset$. Since $\mathbf{OF}(X)$ is a lattice
there exists a filter $\mathcal{G}\in\mathrm{OF}(X)$ such that $\F_1\cup \F_2\subset \mathcal{G}$. It follows that $\emptyset= F_1\cap F_2\in \mathcal G$, which implies a contradiction.
\end{proof}

Recall that a lattice is distributive if and only if it contains as a sublattice none of the following two lattices:

\begin{figure}[H]
\begin{center}
\begin{tikzpicture}[]
\node[circle,draw] (a) at (0, 0) {};
\node[circle,draw] (b) at (1.5, 0) {};
\node[circle,draw] (c) at (3, 0) {};
\node[circle,draw] (d) at (1.5, -1.5) {};
\node[circle,draw] (e) at (1.5, 1.5) {};
\node[circle,draw] (f) at (7, 0.4) {};
\node[circle,draw] (g) at (7, -0.4) {};
\node[circle,draw] (h) at (8.5, -1.5) {};
\node[circle,draw] (i) at (8.5, 1.5) {};
\node[circle,draw] (j) at (10, 0) {};
\foreach \from/\to in {a/d,b/d,c/d,a/e,b/e,c/e,f/g,g/h,f/i,j/h,j/i} \draw [-] (\from) -- (\to);
\end{tikzpicture}
\end{center}
\caption{Non-distributive lattices.}
\label{diagram:1}
\end{figure}

The latter fact and Theorem~\ref{char} imply the following: % corollary which sheds some light on Question~\ref{q3}.

\begin{corollary}\label{fin}
There exist five-element lattices which are not order isomorphic to $\mathbf{OF}(X)$ for any space $X$. 
\end{corollary}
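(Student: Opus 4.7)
The plan is to exploit Theorem \ref{char} together with the classical Birkhoff characterization of distributive lattices recalled just before the statement. Concretely, I would show that each of the two five-element lattices already depicted in Figure \ref{diagram:1} — the diamond $M_3$ and the pentagon $N_5$ — serves as a witness.

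First, I would note that by the implication (4)$\Rightarrow$(3) of Theorem \ref{char}, as soon as $\mathbf{OF}(X)$ is a (necessarily nonempty) lattice, it is automatically a distributive lattice. Therefore, it suffices to exhibit a five-element lattice that is not distributive. Next, I would inspect Figure \ref{diagram:1}: the left-hand lattice is $M_3$, with a bottom $d$, a top $e$, and three pairwise incomparable atoms $a,b,c$, each satisfying $a\wedge b=a\wedge c=b\wedge c=d$ and $a\vee b=a\vee c=b\vee c=e$; the right-hand lattice is $N_5$, with a bottom $h$, a top $i$, the chain $h<g<f<i$ on one side, and an additional element $j$ satisfying $h<j<i$ incomparable with both $f$ and $g$. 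In both cases the underlying set has exactly five elements, and each of $M_3$, $N_5$ contains itself as a sublattice. By the characterization recalled in the paragraph preceding the corollary — a lattice is distributive if and only if it contains neither $M_3$ nor $N_5$ as a sublattice — neither $M_3$ nor $N_5$ is distributive.

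Combining these two steps, if there existed a space $X$ with $\mathbf{OF}(X)$ order isomorphic to $M_3$ (respectively $N_5$), then by Theorem \ref{char} the lattice $\mathbf{OF}(X)\cong M_3$ (respectively $N_5$) would have to be distributive, contradicting the previous paragraph. Hence $M_3$ and $N_5$ are the required examples, and the corollary follows. No real obstacle arises; the only point to keep straight is that the hypothesis of Theorem \ref{char}(4) is simply that $\mathbf{OF}(X)$ be a lattice, with no further restriction on $X$, so the argument applies uniformly to every Hausdorff space considered in the paper.
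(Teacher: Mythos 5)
Your proof is correct and follows essentially the same route as the paper: the corollary is obtained by combining Theorem~\ref{char} (a lattice $\mathbf{OF}(X)$ must be distributive) with the Birkhoff criterion showing that the two five-element lattices of Figure~\ref{diagram:1} are non-distributive. No gaps; the argument matches the paper's.
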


Corollary~\ref{fin} implies the following natural problem, which will be studied in the next sections.

\begin{problem}\label{pr1}
Characterize finite lattices which can be represented as the lattice $\mathbf{OF}(X)$ for some Hausdorff space $X$.
\end{problem}

\begin{lemma}\label{dens}
Let $X$ be an almost H-closed space and $\mathcal{F}_1,\mathcal{F}_2\in\mathbf{OF}(X)$. Then $\Int(\overline{F})\in\mathcal{F}_2$ for each $F\in\mathcal{F}_1$.
\end{lemma}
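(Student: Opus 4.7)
The plan is to exploit the almost H-closedness of $X$: by definition there is a unique free open ultrafilter $\mathcal{F}^\ast$ on $X$, and every free open filter extends via Zorn's Lemma to a free open ultrafilter, which by uniqueness has to be $\mathcal{F}^\ast$. Consequently, both $\mathcal{F}_1$ and $\mathcal{F}_2$ are contained in $\mathcal{F}^\ast$. This will be the engine of the argument.

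First, I would reduce to the case where $F$ itself is open: since $\mathcal{F}_1$ is an open filter there exists open $F'\in\mathcal{F}_1$ with $F'\subseteq F$, and $\Int(\overline{F'})\subseteq \Int(\overline{F})$, so it suffices to prove $\Int(\overline{F'})\in\mathcal{F}_2$. Now I would assume towards a contradiction that $\Int(\overline{F})\notin\mathcal{F}_2$, and set $V:=X\setminus\overline{F}$, an open set. The crux of the proof will be the claim that $V$ meets every member of $\mathcal{F}_2$. Indeed, if some $G\in\mathcal{F}_2$ were disjoint from $V$, I could pass to an open refinement $G'\in\mathcal{F}_2$ with $G'\subseteq G$; then $G'\subseteq\overline{F}$, and because $\Int(\overline{F})$ is the largest open subset of $\overline{F}$, one gets $G'\subseteq\Int(\overline{F})$, forcing $\Int(\overline{F})\in\mathcal{F}_2$, which contradicts the standing assumption.

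With this claim in hand, $\mathcal{F}_2\cup\{V\}$ has the finite intersection property, so it generates an open filter (since $V$ is open and $\mathcal{F}_2$ has an open base) which is free (it contains the free filter $\mathcal{F}_2$). Extending this to a free open ultrafilter and invoking almost H-closedness, that ultrafilter must be $\mathcal{F}^\ast$. But then $V\in\mathcal{F}^\ast$ and $F\in\mathcal{F}_1\subseteq\mathcal{F}^\ast$, yielding $\emptyset=F\cap V\in\mathcal{F}^\ast$, the desired contradiction.

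No step here is particularly deep; the only place where one should be careful is the derivation of the FIP of $\mathcal{F}_2\cup\{V\}$, which is exactly the elementary observation that an open subset of $\overline{F}$ automatically sits inside $\Int(\overline{F})$. Everything else is just bookkeeping around the uniqueness of $\mathcal{F}^\ast$.
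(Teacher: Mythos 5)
Your proof is correct and runs on essentially the same engine as the paper's: you adjoin $V=X\setminus\overline{F}$ to a free open filter and derive a contradiction from the fact that the resulting free open filter and $\mathcal{F}_1$ must both sit inside the unique free open ultrafilter, while $F\cap V=\emptyset$. The only cosmetic difference is that the paper routes the argument through the infimum $\F_{\inf}$ and supremum $\F_{\sup}$ of the complete lattice $\mathbf{OF}(X)$, whereas you work directly with $\mathcal{F}_2$ and the unique ultrafilter; the underlying mechanism is identical.
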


\begin{proof}
By Theorem~\ref{char}, the poset $\mathbf{OF}(X)$ is a complete lattice. Let $\mathcal F_{\sup}$ and $\mathcal{F}_{\inf}$ be the supremum and infimum, respectively, of $\mathbf{OF}(X)$. 
Note that for any $F\subset X$ and open filter $\mathcal F$ on $X$, $\overline{F}\in\mathcal F$ if and only if $\Int(\overline{F})\in\F$.
Taking into account the latter argument, it suffices to check that for each $F\in\F_{\sup}$, $\overline{F}\in\mathcal F_{\inf}$. To derive a contradiction, assume that there exists $F\in \F_{\sup}$ such that $\overline{F}\notin \F_{\inf}$. It follows that for each $H\in \F_{\inf}$ the set $H\setminus \overline{F}$ is nonempty. Then the family $\{H\setminus \overline{F}: H\in\F_{\inf}\}$ forms a base of some free open filter on $X$ which is not contained in the filter $\F_{\sup}$, contradiction.
\end{proof}

\begin{corollary}\label{col}
Let $X$ be an almost H-closed space and $\mathcal U$ be the unique free open ultrafilter on $X$. Then the family $\{\Int(\overline{U}):U\in\mathcal U\}$ is a base of the filter $\F_{\inf}$.
\end{corollary}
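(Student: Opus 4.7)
The plan is to let $\mathcal G$ be the filter on $X$ generated by the family $\mathcal B=\{\Int(\overline{U}):U\in \mathcal U\}$ and show that $\mathcal G=\F_{\inf}$. First, since $\mathcal U$ is the \emph{unique} free open ultrafilter on $X$, and since every free open filter extends to a free open ultrafilter (Zorn's Lemma is applicable thanks to Corollary~\ref{uni}), $\mathcal U$ is the top element of $\mathbf{OF}(X)$, i.e., $\mathcal U=\F_{\sup}$. Applying Lemma~\ref{dens} with $\F_1=\mathcal U$ and $\F_2=\F_{\inf}$ gives $\Int(\overline{U})\in \F_{\inf}$ for every $U\in \mathcal U$, so every generator in $\mathcal B$ already lies in $\F_{\inf}$ and hence $\mathcal G\subseteq \F_{\inf}$.

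For the reverse inclusion I would verify that $\mathcal G$ is itself a free open filter on $X$; this suffices because $\F_{\inf}$ is the minimum of $\mathbf{OF}(X)$ by Lemmas~\ref{new} and~\ref{inf} (note that $X$ is locally H-closed and not H-closed, as $\mathbf{OF}(X)\neq\emptyset$). The family $\mathcal B$ has the finite intersection property and is downward directed, since $\mathcal U$ is closed under finite intersections and $\Int(\overline{U_1\cap U_2})\subseteq \Int(\overline{U_1})\cap\Int(\overline{U_2})$; and $\mathcal G$ is open because each set $\Int(\overline{U})$ is open by definition.

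The only non-routine step is verifying that $\mathcal G$ is free, and this is where Lemma~\ref{unew} does the work. Given $x\in X$, freeness of $\mathcal U$ yields an open $V\ni x$ with $X\setminus V\in \mathcal U$. Since $V\cap(X\setminus V)=\emptyset$, we must have $V\notin \mathcal U$, so applying Lemma~\ref{unew} to $F=X$ and the open subset $V\subseteq X$ gives $U':=X\setminus\overline{V}\in \mathcal U$. As $X\setminus V$ is closed and contains $U'$, we get $\overline{U'}\subseteq X\setminus V$, hence $\Int(\overline{U'})\subseteq X\setminus V$. Thus $X\setminus V$ is a superset of a generator in $\mathcal B$, so $X\setminus V\in \mathcal G$, witnessing freeness at the point $x$. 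This proves $\F_{\inf}\subseteq \mathcal G$, hence equality, and therefore $\mathcal B$ is a base of $\F_{\inf}$. The main (and essentially the only) obstacle is the freeness verification, which is handled cleanly by combining the freeness of $\mathcal U$ with the maximality condition from Lemma~\ref{unew}.
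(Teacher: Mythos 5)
Your proposal is correct and follows essentially the same route as the paper: show the filter generated by $\{\Int(\overline{U}):U\in\mathcal U\}$ is a free open filter, get one inclusion from Lemma~\ref{dens} and the other from the minimality of $\F_{\inf}$ (Lemma~\ref{inf}). The only difference is that you spell out the freeness verification (via Lemma~\ref{unew}) that the paper dismisses as ``clearly,'' and that verification is sound.
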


\begin{proof}
Clearly, the filter $\mathcal W$ generated by the family $\{\Int(\overline{U}):U\in\mathcal U\}$ is a free open filter. %Note that for each $U\in\mathcal U$, $\overline{U}=\overline{\Int(\overline{U})}$. Since the filter $\mathcal U$ is free, the filter $\mathcal W$ is free as well. 
Lemma~\ref{dens} provides that the filter $\mathcal W$ is contained in any other free open filter on $X$. Lemma~\ref{inf} implies that $\mathcal W=\F_{\inf}$.
\end{proof}

The following lemma is useful for the investigation of finite and linear posets of free open filters.

\begin{lemma}\label{lm}
Let $\F, \H$ be two distinct free open filters on a space $X$ such that $\F\subset \H$ and there exists no open filter $\mathcal G$ satisfying $\F\subsetneqq\mathcal G\subsetneqq \H$. Let $P$ be any open set which belongs to $\H\setminus \F$. By $\mathcal T_P$ we denote the  filter on $X$ generated by the family $\{F\cap(X\setminus P): F\in \F\}$. Then the following statements hold:
\begin{enumerate}
\item $\H$ is generated by the family $\F\cup\{P\}$;
\item $\F=\{T\cup H: T\in \mathcal T_P$ and $H\in \H\}$;
\item $\mathcal T_P$ traces a free open ultrafilter on $X\setminus P$;
\item if for any open filter $\mathcal G$ on $X$ the inclusion $\F\subsetneqq \mathcal G$ implies that $\H\subset \mathcal G$, then $\F$ is generated by the family $\{U\subset X: U$ is open and $U\cap (X\setminus P)\in \mathcal T_P\}$.
\end{enumerate}

If, moreover, the space $X$ is almost H-closed, then

\begin{itemize}
\item[(5)] $\mathcal T_P$ traces an open ultrafilter on $\overline{P}\setminus P$. % generated by the family $\{F\cap(\overline{P}\setminus P): F\in \F\}$.
%\item[(6)] the filter $\mathcal T_P$ is generated by the family $\{F\cap (\overline{H}\setminus P): F\in \F, H\in\H\}$.
%\item[(6)] the filter $\mathcal T_P$ is generated by the family $\{F\cap (\overline{H}\setminus H): F\in \F, P\supset H\in\H\}$;
\end{itemize}
\end{lemma}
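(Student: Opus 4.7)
The plan is to exploit the covering-pair structure of $\F\subset\H$ in $\mathbf{OF}(X)$ together with the role of $P$ as a separator between them. Items (1)--(4) are order-theoretic and rely only on this covering property; item (5) is where almost H-closedness enters through Lemma~\ref{dens}, and I expect it to be the main obstacle.

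For (1), the filter generated by $\F\cup\{P\}$ is an open filter strictly above $\F$ (since $P\notin\F$) and contained in the proper filter $\H$, so the covering hypothesis forces it to equal $\H$. For (2), the decomposition $F = (F\setminus P)\cup (F\cap P)$ combined with (1) gives both inclusions: the first piece lies in $\mathcal T_P$ and the second in $\H$, while conversely $T\cup H\supseteq F_1\cap F_2\in\F$ for suitable $F_1, F_2\in\F$ once (1) is used to witness $H$.

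For (3), I first check that $\mathcal T_P$ is proper (otherwise $F\subseteq P$ for some $F\in\F$ yields $P\in\F$) and that its trace on $X\setminus P$ is a free open filter. The ultrafilter property is the only real content. The strategy is contrapositive: if $V = U\cap(X\setminus P)$ with $U$ open in $X$ is compatible with but missing from the trace, the open filter on $X$ generated by $\F\cup\{U\cup P\}$ is proper (as $V$ meets every element of $\mathcal T_P$) and strictly between $\F$ and $\H$ --- strict containment above $\F$ comes from $V\notin$ trace; the assumption that this filter equals $\H$ would force $P$ to be covered by a set of the form $F\cap(U\cup P)$, which in turn would force $F\cap(X\setminus P)$ to be disjoint from $V$, contradicting compatibility. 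This contradicts the covering hypothesis. Item (4) is similar: defining $\F^*$ as the filter generated by $\{U\subseteq X:U$ open, $U\cap(X\setminus P)\in\mathcal T_P\}$, the crucial calculation is that $\bigcap U_i\subseteq P$ with $U_i\cap(X\setminus P)\supseteq F_i\cap(X\setminus P)$ forces $\bigcap F_i\subseteq P$, so $P\notin\F^*$; the additional hypothesis then yields $\F=\F^*$.

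The main obstacle is item (5). For properness of the trace on $\overline{P}\setminus P$: if an open $F\in\F$ satisfied $F\cap\overline{P}\subseteq P$, then the filter generated by $\F\cup\{X\setminus\overline{P}\}$ would be proper (otherwise $G\cap F\subseteq\overline{P}\cap F\subseteq P$ for some $G\in\F$, contradicting $P\notin\F$), hence a free open filter on $X$; almost H-closedness together with Lemma~\ref{dens} then gives $\Int(\overline{X\setminus\overline{P}})\in\H$, a set contained in $X\setminus P$ while $P\in\H$, a contradiction. For the ultrafilter property of the trace, given an open $V = W\cap(\overline{P}\setminus P)$ outside the trace on $\overline{P}\setminus P$, I test $W\cap(X\setminus P)$ against the open ultrafilter on $X\setminus P$ from (3): if it is in that ultrafilter, pulling back shows $V$ lies in the trace on $\overline{P}\setminus P$, a contradiction; if not, a disjoint trace element $V' = W'\cap(X\setminus P)$ produces $V'\cap(\overline{P}\setminus P)$ in the trace on $\overline{P}\setminus P$ and disjoint from $V$, witnessing maximality.
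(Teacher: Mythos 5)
Your proof is correct, and items (1)--(4) follow essentially the same route as the paper: (1) and (2) use the same decomposition $F=(F\cap(X\setminus P))\cup(F\cap P)$ together with (1) to witness $H\supseteq F_2\cap P$; (3) is the paper's argument in contrapositive clothing (the paper starts from a larger open filter $\mathcal U$ on $X\setminus P$ and extends a set $U\in\mathcal U\setminus\mathcal T_P$ to an open $V\subseteq X$, then uses the filter generated by $\F\cup\{V\cup P\}$, exactly your $\F\cup\{U\cup P\}$); and (4) is the paper's argument with the implicit point $P\notin\F^*$ filled in by the same finite-intersection computation. The only genuine divergence is the first half of (5): the paper applies Lemma~\ref{dens} directly to $P\in\H$ to get $\Int(\overline{P})\in\F$, hence $\Int(\overline{P})\cap(X\setminus P)\in\mathcal T_P$ and so $\overline{P}\setminus P$ is actually a \emph{member} of $\mathcal T_P$, after which (5) follows from (3) by precisely the transfer argument you spell out. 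You instead argue by contradiction through the auxiliary free open filter generated by $\F\cup\{X\setminus\overline{P}\}$ and apply Lemma~\ref{dens} to that filter against $\H$; this is valid (the key facts you use --- properness of the auxiliary filter and $\Int(\overline{X\setminus\overline{P}})\subseteq X\setminus P$ --- both check out), but it is a detour and yields only that $\overline{P}\setminus P$ is $\mathcal T_P$-positive rather than a member, which is all your maximality transfer needs, so nothing is lost. One phrasing slip in (3): ``$P$ to be covered by a set of the form $F\cap(U\cup P)$'' should be the containment $F\cap(U\cup P)\subseteq P$; since your subsequent deduction uses exactly that containment, this is cosmetic rather than a gap.
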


\begin{proof}
1. Observe that the filter $\mathcal G$ generated by the family $\F\cup\{P\}$ is open and satisfies $\F\subset\mathcal G\subset\H$, which implies that either $\mathcal G=\F$ or $\mathcal G=\H$. Since $P\notin \F$, we deduce that $\mathcal G=\H$.

2. Let $\mathcal Z$ be the filter generated by the family $\{T\cup H: T\in \mathcal{T}_P, H\in \mathcal{H}\}$. Consider any $Z\in \mathcal Z$. By the definition of $\mathcal Z$, there exist $T\in \mathcal{T}_P$ and $H\in \mathcal{H}$ such that $T\cup H\subset Z$. By item (1), the filter $\mathcal{H}$ is generated by the family $\mathcal{F}\cup\{P\}$. It follows that there exists an element $F\in \mathcal F$ such that $F\cap P\subset H$. By the definition of $\mathcal T_P$, there exists $G\in\F$ such that $G\subset F$ and $G\cap (X\setminus P)\subset T$. The choice of $F$ implies that $G\cap P\subset F\cap P\subset H$. Thus, $G\subset T\cup H\subset Z$ witnessing that $\mathcal Z\subset\mathcal{F}$.
To show the converse inclusion, fix any $F\in\mathcal F$. Put $H=F\cap P\in\mathcal H$ and $T=F\cap (X\setminus P)\in \mathcal T_P$. Obviously, $T\cup H\in \mathcal{Z}$ and $T\cup H= F$ witnessing that $\mathcal{Z}=\mathcal{F}$.

3. Clearly, $\mathcal T_P$ traces a free open filter on $X\setminus P$. To derive a contradiction, assume that the trace of $\mathcal T_P$ on $X\setminus P$ is properly contained in an open filter $\mathcal U$ on $X\setminus P$. It follows that there exists a set $U\in \mathcal U\setminus \mathcal T_P$ which is open in $X\setminus P$. Then there exists an open set $V\subset X$ such that $V\cap (X\setminus P)=U$. Let $W=V\cup P$. Consider the filter $\mathcal Z$ generated by the family $\F\cup\{W\}$. Since the set $W$ is open we get that $\mathcal Z$ is an open filter. By the definition of $\mathcal Z$, $\F\subsetneqq \mathcal Z\subsetneqq \H$ which implies a contradiction.

4. Assume that for any open filter $\mathcal G$ on $X$ the inclusion $\F\subsetneqq \mathcal G$ implies that $\H\subset \mathcal G$. Consider the filter $\mathcal Z$ which is generated by the family $\{U\subset X: U$ is open and $U\cap (X\setminus P)\in \mathcal T_P\}$. Obviously the filter $\mathcal Z$ is open and $\F\subset \mathcal Z$. If $\mathcal Z\neq \F$, the assumption of the statement implies that $\H\subset \mathcal Z$. But this is not possible, since $P\notin \mathcal Z$. Hence $\mathcal Z=\F$.

5. By Lemma~\ref{dens}, $\overline{P}\setminus P\in\mathcal T_P$. At this point item (5) follows from item (3).
\end{proof}

%For an element $p$ of a poset $P$ let
%$${\uparrow} p=\{x\in P: p\leq x\}\qquad\hbox{and}\qquad {\downarrow}p=\{x\in P: x\leq p\}.$$

A {\em character} of a filter $\F$ is the cardinal $\chi(\F)=\min\{|\mathcal B|: \mathcal B$ is a base of $\F\}$.
Recall that $$\mathfrak{u}=\min\{\chi(\F): \F\hbox{ is a nonprincipal ultrafilter on }\w\}.$$
A space $X$ is called {\em ccc} if every pairwise disjoint family of open subsets of $X$ is countable. 
We finish this chapter with an auxiliary proposition about the character of open ultrafilters.
%A {\em cellularity} of a topological space $X$ is the cardinal $$c(X)=\sup\{|\mathcal A|:\mathcal A \hbox{ is a family of open pairwise disjoint subsets of } X\}.$$ 

%For a cardinal $\kappa$, let 
%$$\mathfrak{u}_{\kappa}=\min\{\chi(\F): \F\hbox{ is an ultrafilter on }\kappa \hbox{ and }|F|=\kappa \hbox{ for any } F\in\F\}.$$  

\begin{proposition}
Let $\F$ be a free open ultrafilter on a ccc space $X$. Then $\chi(\F)\geq\mathfrak{u}$.
\end{proposition}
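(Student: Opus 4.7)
My plan is to produce a nonprincipal ultrafilter on $\w$ of character at most $\chi(\F)$ by pushing $\F$ forward along a countable clopen partition of some $V\in\F$. The enabling observation, already available from Lemma~\ref{unew}, is that an open ultrafilter decides every clopen subset of $X$; so once $V$ splits into countably many pairwise disjoint nonempty open pieces $\{U_n:n\in\w\}$, none of which lies in $\F$, the map $f\colon V\to\w$ sending $U_n$ to $n$ will be continuous into the discrete space $\w$, and $f(\F)=\{A\subseteq\w:f^{-1}(A)\in\F\}$ will be the desired nonprincipal ultrafilter on $\w$.

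To obtain the partition, I will apply Zorn's Lemma to the poset of pairwise disjoint families of nonempty open subsets of $X$, none of which lies in $\F$; freeness guarantees this poset is nonempty. A maximal such $\mathcal A$ has cardinality at most $\w$ by ccc, and the two non-routine claims I will need to verify are that $\mathcal A$ is infinite and that $V=\bigcup\mathcal A$ lies in $\F$. Both reduce to the same dichotomy about $X\setminus\overline V$, which by Lemma~\ref{unew} belongs to $\F$ whenever $\mathcal A$ is finite or $V\notin\F$: if $X\setminus\overline V$ contains two distinct points, Hausdorffness produces two disjoint nonempty open subsets of it which, by maximality of $\mathcal A$, would both lie in $\F$, an absurdity; if $X\setminus\overline V=\{p\}$, then $\{p\}\in\F$ contradicts freeness of $\F$ at $p$. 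I expect this Hausdorff-plus-freeness dichotomy to be the main technical step.

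Once $\mathcal A=\{U_n:n\in\w\}$ and $V\in\F$ are in place, the remaining work is routine. Lemma~\ref{unew} applied inside $V$ to the open set $f^{-1}(A)\subseteq V$ gives either $f^{-1}(A)\in\F$ or $V\setminus\overline{f^{-1}(A)}\in\F$, and since $V\setminus\overline{f^{-1}(A)}\subseteq f^{-1}(\w\setminus A)$, this makes $f(\F)$ an ultrafilter on $\w$; nonprincipality follows from $U_n\notin\F$. For the character bound, given a base $\mathcal B$ of $\F$ with $|\mathcal B|=\chi(\F)$, I will show that the sets $\tilde B=\{n\in\w:B\cap U_n\neq\emptyset\}$ for $B\in\mathcal B$ form a base of $f(\F)$: the inclusion $f^{-1}(\tilde B)\supseteq B\cap V\in\F$ places each $\tilde B$ in $f(\F)$, while any $A\in f(\F)$ contains some $\tilde B$ because a base element $B\subseteq f^{-1}(A)$ forces $\tilde B\subseteq A$. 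This will yield $\mathfrak{u}\leq\chi(f(\F))\leq\chi(\F)$.
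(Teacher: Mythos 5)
Your proof is correct, and the overall route is the paper's: manufacture a countable pairwise disjoint family $\{U_n:n\in\w\}$ of nonempty open sets, none in $\F$, whose union $V$ belongs to $\F$, and push $\F$ forward to a nonprincipal ultrafilter on $\w$ of character at most $\chi(\F)$; your base $\{\tilde B:B\in\mathcal B\}$ is exactly the paper's $\{H_B:B\in\mathcal B\}$, and Lemma~\ref{unew} plays the same role in showing the pushforward is an ultrafilter. The one place you genuinely diverge is the construction of the family: the paper builds it by transfinite recursion, invoking freeness at every step to append a new piece and using ccc to guarantee the recursion halts at a countable limit stage with union already in $\F$; you instead take a Zorn-maximal disjoint family, use ccc only to bound its size by $\w$, and recover $V\in\F$ together with infiniteness of the family from the dichotomy on $X\setminus\overline{V}$ --- two points there contradict maximality via Hausdorff separation, a single point contradicts freeness, and the finite case reduces to $V\notin\F$ since $\overline{V}$ is then a finite union of closures. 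Both versions are sound; your maximality argument concentrates the appeals to freeness and Hausdorffness into one step at the end, at the cost of the extra verification that a maximal family must already cover an element of the filter, which the recursive construction obtains by fiat at the terminating stage.
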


\begin{proof}
First we construct a special countable family of pairwise disjoint open subsets of $X$. Let $U_0$ be any open subset of $X$ which doesn't belong to $\F$. Assume that we already constructed pairwise disjoint open subsets $U_\alpha$, $\alpha\in \xi\in\w_1$ such that for any $\delta\in\xi$, $\bigcup_{\alpha\in\delta}U_{\alpha}\notin\F$. If $\bigcup_{\alpha\in\xi}U_{\alpha}\in \F$, then $\{U_{\alpha}:\alpha\in\xi\}$ is the desired family. Otherwise, an open set $T=X\setminus \overline{\bigcup_{\alpha\in\xi}U_{\alpha}}$ is nonempty, as it belongs to $\F$. Pick any $x\in T$. Since the filter $\F$ is free, there exists an open neighborhood $U(x)\subset T$ of $x$ such that $U(x)\notin\F$. Then put $U_{\xi}=U(x)$. Note that the ccc property of the space $X$ ensures that our induction will stop at some countable limit step, i.e. there exists a countable limit ordinal $\theta$ such that $\bigcup_{\alpha\in\theta}U_{\alpha}\in \F$. Enumerate the family $\{U_{\alpha}:\alpha\in\theta\}$ as $\{V_n:n\in\w\}$. For any $F\in\F$ consider the set $H_F=\{n\in\w: F\cap V_n\neq \emptyset\}$. Observe that $\mathcal H=\{H_F: F\in\F\}$ is the filter on $\w$. Pick any subset $A\subset \w$. Lemma~\ref{unew} implies that either $\bigcup_{n\in A}V_n\in\F$ or $\bigcup_{n\in\w\setminus A}V_n\in\F$. In the first case the set $\bigcup_{n\in A}V_n\in\F$ is a witness for $A\in\mathcal H$. In the second case the set $\bigcup_{n\in\omega\setminus A}V_n\in\F$ is a witness for $\w\setminus A\in \mathcal H$. Hence $\mathcal H$ is an ultrafilter. Observe that for any base $\mathcal B$ of $\F$ the family $\{H_B:B\in\mathcal B\}$ is a base of $\mathcal H$. Since $\chi(\mathcal H)\geq \mathfrak{u}$ we get that $\chi(\F)\geq \mathfrak u$.
\end{proof}

%\section{Answering questions~\ref{q1} and~\ref{q2}}
\section{Linearly ordered lattices of free open filters}\label{4}
%In this section ordinals are assumed to carry the order topology if the converse not stated.
In what follows, ordinals are assumed to carry the discrete topology, unless stated otherwise.
%The \v{C}ech-Stone compactification of a space $X$ is denoted by $\beta(X)$. 
If $X$ is a discrete space, then $\beta (X)$ can be described as the set of all ultrafilters on $X$ endowed with the topology $\tau$ generated by the base $\mathcal B=\{\langle A\rangle: A\subseteq X\}$, where $\langle A\rangle=\{u\in\beta(X): A\in u\}$. Recall that each element $x\in X$ is identified with the corresponding principal ultrafilter. Further we refer only to the aforementioned representation of $\beta(X)$.
%For a cardinal $\kappa$ we shall identify a point $\alpha\in\kappa\subset \beta(\kappa)$ with the corresponding principal ultrafilter.
%The family of infinite subsets of $\omega$ is denoted by $[\omega]^{\omega}$.
A space $X$ is called {\em scattered} if each nonempty subspace of $X$ contains an isolated point. A {\em height} of a space $X$ is the minimal ordinal $ht(X)$ such that the $ht(X)$-th Cantor-Bendixson derivative of $X$ is empty.
Recall that Cantor-Bendixson derivatives of a scattered space $X$ are defined by transfinite induction as follows, where $X'$ is the set of all accumulation points of $X$:
\begin{itemize}
\item $X^{0}=X$;
\item $X^{\alpha+1}=\big(X^{\alpha}\big)'$;
\item $X^{\alpha}=\bigcap_{\beta<\alpha}X^{\beta}$, if $\alpha$ is a limit ordinal.
\end{itemize}
The set $X^{\alpha}\setminus X^{\alpha+1}$ is called the $\alpha$-th {\em Cantor-Bendixson level} of $X$ and is denoted by $X^{(\alpha)}$.

A subset $X$ of a space $Y$ is called {\em strongly discrete} if there exists a family $(U_{x})_{x\in X}$ of open pairwise disjoint subsets of $Y$ such that $U_{x}\cap X=\{x\}$ for every $x\in X$.
The following lemma is a folklore.
% and it can be proved similarly as Theorem~3.6.14 from~\cite{Eng}.
Nonetheless, we give an easy proof of it.

\begin{lemma}\label{d}
Let $X=\{x_{\alpha}\}_{\alpha\in\lambda}$ be a strongly discrete subset of $\beta(\kappa)$. Then there exists a homeomorphism $h:\overline{X}\to\beta(\lambda)$ such that $h(x_{\alpha})=\alpha$ for each $\alpha\in\lambda$.
\end{lemma}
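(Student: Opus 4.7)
The plan is to invoke the universal property of the Stone-\v{C}ech compactification of the discrete space $\lambda$ to construct $h^{-1}$ and then invert. First, I would use the pairwise disjoint open sets $(U_\alpha)_{\alpha \in \lambda}$ witnessing strong discreteness of $X$ to choose, for each $\alpha$, a basic clopen neighborhood $\langle A_\alpha \rangle \subseteq U_\alpha$ of $x_\alpha$; here $A_\alpha \subseteq \kappa$ and $A_\alpha \in x_\alpha$. Since $\langle A_\alpha \rangle \cap \langle A_\beta \rangle = \langle A_\alpha \cap A_\beta \rangle$ and the $U_\alpha$'s are pairwise disjoint, the family $\{A_\alpha : \alpha \in \lambda\}$ consists of pairwise disjoint subsets of $\kappa$.

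Next, I would define $f: \lambda \to \overline{X}$ by $f(\alpha) = x_\alpha$ and appeal to the universal property of $\beta(\lambda)$: since $\lambda$ carries the discrete topology and $\overline{X}$ is compact Hausdorff (being closed in $\beta(\kappa)$), the map $f$ extends uniquely to a continuous $g: \beta(\lambda) \to \overline{X}$ with $g(\alpha) = x_\alpha$ for every $\alpha \in \lambda$. The image $g(\beta(\lambda))$ is compact, hence closed in $\overline{X}$, and contains $X$, so $g$ is surjective. Once injectivity of $g$ is established, the standard compact-to-Hausdorff principle shows that $g$ is a homeomorphism, and $h := g^{-1}$ is the desired map.

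The main step is therefore the injectivity of $g$, and this is where the clopen separation provided by the $A_\alpha$'s becomes crucial. Given distinct $u, v \in \beta(\lambda)$, pick $B \subseteq \lambda$ with $B \in u$ and $\lambda \setminus B \in v$. Since $\langle B \rangle$ equals the closure of $B$ in $\beta(\lambda)$, continuity of $g$ gives $g(u) \in \overline{\{x_\alpha : \alpha \in B\}}$ and $g(v) \in \overline{\{x_\alpha : \alpha \in \lambda \setminus B\}}$. Let $C_B = \bigcup_{\alpha \in B} A_\alpha \subseteq \kappa$; then pairwise disjointness of the $A_\alpha$'s together with $A_\alpha \in x_\alpha$ yields $x_\alpha \in \langle C_B \rangle$ if and only if $\alpha \in B$, and $\langle C_B \rangle$ is clopen in $\beta(\kappa)$. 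Hence $\overline{\{x_\alpha : \alpha \in B\}} \subseteq \langle C_B \rangle$, and analogously $\overline{\{x_\alpha : \alpha \in \lambda \setminus B\}} \subseteq \langle C_{\lambda \setminus B} \rangle$. Since $\langle C_B \rangle \cap \langle C_{\lambda \setminus B} \rangle = \langle \emptyset \rangle = \emptyset$, we conclude $g(u) \ne g(v)$.

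I expect the only subtle point to be the passage from the refining open sets $U_\alpha$ to basic clopen neighborhoods $\langle A_\alpha \rangle$ with pairwise disjoint index sets in $\kappa$; after that, the argument reduces to standard properties of $\beta(\lambda)$ and the automatic homeomorphism principle for continuous bijections between compact Hausdorff spaces.
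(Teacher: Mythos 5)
Your proof is correct. It takes a slightly different route from the paper's: the paper shows that every function $f:X\to[0,1]$ extends continuously to $\overline{X}$ (by transporting $f$ to a function on $\kappa$ that is constant on each $A_\alpha\cap\kappa$ and extending over $\beta(\kappa)$), i.e.\ it verifies that $X$ is $C^*$-embedded in the compact space $\overline{X}$, and then cites Corollary~3.6.3 of Engelking to conclude $\overline{X}\cong\beta(X)\cong\beta(\lambda)$. You instead build the map in the opposite direction: extend $\alpha\mapsto x_\alpha$ over $\beta(\lambda)$ by the universal property, get surjectivity from compactness and density of $X$ in $\overline{X}$, and prove injectivity by hand using the clopen sets $\langle C_B\rangle$, $\langle C_{\lambda\setminus B}\rangle$ obtained from the pairwise disjoint $A_\alpha\subseteq\kappa$; the compact-to-Hausdorff principle then gives the homeomorphism. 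Both arguments hinge on the same combinatorial core — replacing the separating open sets $U_\alpha$ by basic clopen neighborhoods $\langle A_\alpha\rangle$ with pairwise disjoint $A_\alpha$ — and your reduction of disjointness of the $A_\alpha$ from disjointness of the $U_\alpha$ (via $\langle A_\alpha\cap A_\beta\rangle=\emptyset$ forcing $A_\alpha\cap A_\beta=\emptyset$) is sound. The trade-off is that your version is self-contained and avoids the external citation, at the cost of a slightly longer injectivity check, while the paper's version is shorter but leans on the characterization of $\beta X$ via $C^*$-embedding.
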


\begin{proof}
Since the set $X$ is strongly discrete there exists a family $\{A_{\alpha}:\alpha\in\lambda\}$ of open pairwise disjoint subsets of $\beta(\kappa)$ such that $X\cap A_{\alpha}=\{x_{\alpha}\}$ for any $\alpha\in\lambda$. 
Fix any function $f:X\rightarrow [0,1]$. Define the function $f_1:\kappa\rightarrow [0,1]$ as follows:
\begin{equation*}
    f_1(\xi)=
    \left\{
      \begin{array}{cl}
        f(x_{\alpha}), & \hbox{if~~} \xi\in A_{\alpha}\cap \kappa;\\
        0,   & \hbox{if~~} \xi\in \kappa\setminus(\bigcup_{\alpha\in\lambda}A_{\alpha}).
      \end{array}
    \right.
\end{equation*}

By the definition of $\beta(\kappa)$, there exists a continuous extension $\hat{f_1}:\beta(\kappa)\rightarrow [0,1]$ of $f_1$. Let $g=\hat{f_1}{\restriction}_{\overline{X}}$. Since $\hat{f_1}(x_{\alpha})=f(x_{\alpha})$, $g$ is a continuous extension of $f$.  Hence every function $f:X\rightarrow [0,1]$ can be extended to a continuous function $g:\overline{X}\rightarrow [0,1]$. Corollary 3.6.3 from~\cite{Eng} yields a homeomorphism $h: \overline{X}\to \beta(X)$ which fixes points of $X$. Since $X$ is a discrete set of cardinality $\lambda$, we can identify $\beta(X)$ and $\beta(\lambda)$.
\end{proof}

Let $\rho$ be an equivalence relation on a set $X$. Then for each $x\in X$ such that $x\rho y$ if and only if $x=y$, we denote the equivalence class $\{x\}$ simply by $x$.  
The following scheme was invented by Mooney~\cite{M} and it will be crucial in constructing almost H-closed spaces with certain properties of their lattices of free open filters.

\begin{construction}[Mooney~\cite{M}]\label{cons}
Let $X$ be a non-H-closed topological space, $x^*$ be a non-isolated point of $X$, and $K(X)$ be the Katetov extension of $X$ (see Definition~\ref{Katetov}).

Consider the Tychonoff product $K(X){\times}\{0,1\}$ where the set $\{0,1\}$ carries the discrete topology. Let $Y$ be the quotient space $(K(X){\times}\{0,1\})/\rho$ where the equivalence relation $\rho$ is defined as follows: $(a,i)\sim (b,j)$ if and only if $a=b$ and $i=j$, or $a=b\in K(X)\setminus X$. Finally, put $\mathrm{M}(X,x^*)=Y\setminus\{(x^*,0)\}$.
One can easily check that the space $\mathrm{M}(X,x^*)$ is scattered if and only if $X$ is scattered.
\end{construction}

An open filter $\F$ on a space $X$ is called {\em regular open} if the family $\{\Int(\overline{F}): F\in\F\}$ forms a base of $\F$. 

\begin{proposition}\label{supernew}
Let $\F$ be a free regular open filter on a space $X$. Then there exists a space $Y$ such that $\mathbf{OF}(Y)$ is order isomorphic to the subposet $\{\G\in \mathbf{OF}(X): \F\subseteq \G\}$ of $\mathbf{OF}(X)$.
\end{proposition}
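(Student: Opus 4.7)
The plan is to realize $Y$ as a subspace of the Katětov extension $K(X)$. Set $C_\F = \{u \in K(X) \setminus X : \F \subseteq u\}$, the set of open ultrafilters on $X$ that extend $\F$, and put $Y = K(X) \setminus C_\F$ with the subspace topology. First I would verify that $C_\F$ is closed in $K(X)$: if $y \in K(X) \setminus C_\F$ and $y \notin X$, then $y$ is an open ultrafilter with $\F \not\subseteq y$, so one can select an open $V \in \F$ with $V \notin y$ and an open $W \in y$ with $W \cap V = \emptyset$, whence the basic neighborhood $\{y\} \cup W$ of $y$ in $K(X)$ misses $C_\F$. Consequently $Y$ is an open, and therefore Hausdorff, subspace of $K(X)$ containing $X$ as a dense open subset.

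For each open set $H$ of $X$, write $H^* := H \cup \{u \in Y \setminus X : H \in u\}$; each $H^*$ is open in $Y$, being the trace on $Y$ of an open subset of $K(X)$. The order isomorphism will be the trace map $\Phi \colon \mathbf{OF}(Y) \to \{\G \in \mathbf{OF}(X) : \F \subseteq \G\}$, $\Phi(\G') = \{G \cap X : G \in \G'\}$, with inverse $\Psi$ sending $\G$ to the filter on $Y$ generated by $\{H^* : H$ an open element of $\G\}$. Both maps visibly preserve inclusion, and the identity $\Phi \circ \Psi = \mathrm{id}$ is immediate from $H^* \cap X = H$.

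The crucial well-definedness step is to show $\F \subseteq \Phi(\G')$ for every $\G' \in \mathbf{OF}(Y)$. Suppose by contradiction that some open $V \in \F$ is not in $\Phi(\G')$. A Zorn's lemma argument in the lattice of open filters on $X$ extends $\Phi(\G')$ to an open ultrafilter $u$ with $V \notin u$. Because $V \in \F$, we have $\F \not\subseteq u$, so $u \in Y \setminus X \subseteq Y$ by construction of $Y$. Applying freeness of $\G'$ at $u$ yields a basic open neighborhood $\{u\} \cup W$ (with $W \in u$ open) such that $Y \setminus (\{u\} \cup W) \in \G'$; tracing on $X$ gives $X \setminus W \in \Phi(\G') \subseteq u$, contradicting $W \in u$. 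Dually, $\Psi(\G)$ is free at every point $u \in Y \setminus X$ because such $u$ fails to contain some open $V_0 \in \F \subseteq \G$, and for any open $W_0 \in u$ disjoint from $V_0$ the generator $V_0^*$ lies in $Y \setminus (\{u\} \cup W_0)$.

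The main obstacle is the identity $\Psi \circ \Phi = \mathrm{id}$: one must show that every open $G$ in a free open filter $\G'$ on $Y$ is a superset of some $H_0^*$ with $H_0$ an open element of $\Phi(\G')$, even though a priori an open $G \subseteq Y$ with $G \cap X = H$ can be strictly smaller than $H^*$. This is the point where the regular openness of $\F$ is expected to be used decisively: combined with a freeness argument of the same flavor as above, applied to any open ultrafilter witnessing a potential gap in $H^* \setminus G$, one derives a contradiction and concludes that $G$ necessarily equals $(G \cap X)^*$, so that $\G'$ coincides with $\Psi(\Phi(\G'))$ and $\Phi$ is an order isomorphism onto $\{\G \in \mathbf{OF}(X) : \F \subseteq \G\}$.
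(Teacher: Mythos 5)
Your construction $Y=K(X)\setminus C_\F$ does not work: the trace map $\Phi$ is surjective onto $\{\G\in\mathbf{OF}(X):\F\subseteq\G\}$ (and your argument that $\F\subseteq\Phi(\G')$, which is where regular openness enters, is essentially the paper's key step), but it is badly non-injective, and the identity $\Psi\circ\Phi=\mathrm{id}$ that you defer to the end is simply false. The problem is that the remainder points of $K(X)$ that you keep, namely the free open ultrafilters $u$ with $\F\not\subseteq u$, form a closed nowhere dense subset of $Y$ on which free open filters can ``concentrate'' without violating freeness, and all such filters have the same trace on $X$. Concretely, take $X=\omega$ discrete and $\F=p$ a free ultrafilter on $\omega$ (every filter on a discrete space is regular open and free). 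Then $C_\F=\{p\}$, the target poset $\{\G\in\mathbf{OF}(\omega):p\subseteq\G\}$ is the singleton $\{p\}$, and $Y=K(\omega)\setminus\{p\}$. Let $\G_1$ be the filter on $Y$ generated by $\{P:P\in p\}$ and $\G_2=\Psi(p)$ the filter generated by $\{P^*:P\in p\}$. Both are free open filters on $Y$ (freeness at a remainder point $u\neq p$ is witnessed by any $F\in u$ disjoint from some $P\in p$), both trace to $p$ on $\omega$, yet $\G_1\supsetneq\G_2$ because every infinite $P\subseteq\omega$ lies in some free ultrafilter other than $p$, so no $P_0^*$ is contained in $P$. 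In fact the paper's Proposition~\ref{L} shows such a $Y$ carries antichains of free open filters of size $>2^\omega$. This also refutes your claim that an open $G\subseteq Y$ in a free open filter must equal $(G\cap X)^*$: the set $G=P\subseteq\omega$ is open in $Y$ and is a proper subset of $P^*$.

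The paper circumvents exactly this obstruction with Mooney's doubling construction: it forms $X_\F$ ($X$ with the single point $\F$ attached), takes $Y=\mathrm{M}(X_\F,\F)$, i.e.\ glues two copies of $K(X_\F)$ along the whole remainder and deletes only $(\F,0)$. The effect is that the entire remainder is swallowed by the H-closed subspace $K(X_\F){\times}\{1\}$, whose complement $X{\times}\{0\}$ must therefore belong to \emph{every} free open filter on $Y$; this is what makes the trace map injective. Regular openness of $\F$ is then used only once, to rule out a free open filter on $Y$ whose trace misses some $F=\Int(\overline F)\in\F$ (such a filter would accumulate at a point $(\mathcal U,1)$ of $Y$, contradicting freeness). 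If you want to salvage your approach you need some analogous mechanism forcing $X$ itself into every free open filter on $Y$; merely deleting $C_\F$ from $K(X)$ does not provide one.
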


\begin{proof}
By $X_\F$ we denote the space $X$ with an attached singleton $\F$, whose open neighborhood base consists of the sets $F\cup\{\F\}$, where $F\in\F$ is open. Let $Y=\mathrm{M}(X_\F,\F)$ (see Construction~\ref{cons}). 

Consider any free open filter $\Phi$ on $Y$. Note that the subspace $K(X_\F){\times}\{1\}$ of $Y$, being homeomorphic to $K(X_\F)$, is H-closed. So, $Y\setminus (K(X_\F){\times}\{1\})=X{\times}\{0\}\in\Phi$.
This allows us to define the map $g:\mathbf{OF}(Y)\to \mathbf{OF}(X)$ by $g(\Phi)=\{G\subseteq X: G{\times}\{0\}\in\Phi\}$. It is easy to see that the map $g$ is injective and order preserving.
% Put $\G_{\Phi}=\{G\subseteq X: G{\times}\{0\}\in\Phi\}$. Clearly, $\G_{\Phi}$ is a free open filter on $X$. 
Seeking a contradiction, assume that there exists $\Phi\in\mathbf{OF}(Y)$ such that $\F\setminus g({\Phi})\neq \emptyset$. Then, taking into account that the filter $\F$ is regular open, there exists a set $F=\Int(\overline{F})\in\F$ such that $F\notin g(\Phi)$.
%$G\cap (X\setminus F)\neq \emptyset$ for every $G\in g(\Phi)$.
Therefore $\overline{F}\notin g(\Phi)$, because otherwise we would have 
$F =\Int(\overline{F}) \in  g(\Phi)$, which we assumed to be not the case.
%We claim that $\overline{F}\notin g(\Phi)$. Indeed, otherwise, we would get that $\Int(\overline{F})=F\in g(\Phi)$, as the filter $g(\Phi)$ is open. But this contradicts our assumption. %Hence $H\cap ((X\setminus F){\times}\{0\})\neq \emptyset$ for any $H\in\Phi$. Since the filter $\Phi$ is open, $(\overline{F}\setminus F){\times}\{0\}\notin\Phi$. 
Hence $H\cap ((X\setminus \overline{F}){\times}\{0\})\neq \emptyset$ for each $H\in\Phi$. Let $\mathcal U$ be any free open ultrafilter on $Y$ such that $\Phi\cup\{(X\setminus \overline{F}){\times}\{0\}\}\subset \mathcal U$. It is easy to see that $(\mathcal U,1)\in Y$ is an accumulation point of the filter $\Phi$, which yields a contradiction. Hence $\F\subseteq g(\Phi)$ for every $\Phi\in\mathbf{OF}(Y)$. If $\G$ is a free open filter on $X$ such that $\F\subseteq \G$, then let $\Phi_\G$ be the filter on $Y$ generated by the family $\{G{\times}\{0\}:G\in\G\}$. Obviously, $\Phi_\G\in\mathbf{OF}(Y)$ and $g(\Phi_\G)=\G$, which completes the proof. 
\end{proof}

The following definition is crucial for constructing spaces with linear lattices of free open filters.

\begin{definition}\label{def}
Let $\F$ be an ultrafilter on an infinite cardinal $\kappa$. We write that $\mathfrak{h}(\F)> \alpha$ (and say that the {\em height} of $\F$ is greater than $\alpha$)  if there exists a scattered space $X\subset \beta(\kappa)$ satisfying the following conditions:
\begin{itemize}
    \item[(1)] 
    $\{\F\}= X^{(\alpha)}$ and for any $\xi\in \alpha$ the filter $\mathcal N(\F)$ traces on $X^{(\xi)}$ an ultrafilter $\F_{\xi}$;
    \item[(2)] for every $0<\gamma\leq \alpha$ and for each selector $\langle F_{\xi}\in \F_{\xi}: \xi\in \gamma\rangle$ such that the set $\bigcup_{\xi\in \gamma}F_{\xi}$ is open in $X$, there exists $F\in \F$ satisfying $\langle F\rangle \cap \bigcup_{\xi\in\gamma}X^{(\xi)}\subseteq \bigcup_{\xi\in \gamma}F_{\xi}$.
\end{itemize}
\end{definition}

For a given scattered
space $X$ witnessing that $\mathfrak h(\F) > \alpha$, and fixed $\xi < \alpha$, one may form the
scattered subspace 
$Y=\bigcup_{\gamma\in\xi}X^{(\gamma)}\cup\{\F\}$ of $X$, which witnesses that $\mathfrak h(\F) >\xi$. Hence the following is well defined: 
 $\mathfrak{h}(\F)=\alpha$ if $\mathfrak{h}(\F)>\xi$ for any $\xi\in\alpha$, but $\lnot (\mathfrak{h}(\F)>\alpha)$. 
 %Also note that if
%$\mathfrak h(\F) > \alpha$, then this can be witnessed by a scattered
%space $X$ such that $X^{(\alpha)} = \{\F\}$.

Clearly, $\mathfrak{h}(\F)=1$ if and only if $\F$ is a principal ultrafilter. The next lemma shows that consistently there exist ultrafilters of height 2.

\begin{lemma}
$\mathfrak{h}(\F)=2$ for each P-point $\F\in\omega^*$. 
\end{lemma}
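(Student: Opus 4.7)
My plan is to show $\mathfrak{h}(\F) > 1$ and that no witness for $\mathfrak{h}(\F) > 2$ can exist. The first is a general property of any nonprincipal ultrafilter $\F \in \omega^*$: the subspace $X = \omega \cup \{\F\} \subseteq \beta(\omega)$ is a witness, because $X^{(0)} = \omega$ is discrete, $X^{(1)} = \{\F\}$, the trace $\F_0$ is $\F$ itself (an ultrafilter on $\omega$), and condition (2) at $\gamma = 1$ is satisfied by taking $F = F_0$ for each $F_0 \in \F_0$.

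For the main part, suppose towards contradiction that a scattered $X \subseteq \beta(\omega)$ witnesses $\mathfrak{h}(\F) > 2$, so $X^{(2)} = \{\F\}$ and the trace $\F_1$ of $\mathcal{N}(\F)$ on $X^{(1)}$ is an ultrafilter. Since isolated points of $\beta(\omega)$ are exactly $\omega$, we have $X^{(1)} \subseteq \omega^*$. My strategy is to derive two incompatible conclusions about $X^{(1)}$: a size bound $|X^{(1)}| \leq \mathfrak{c}$ and the claim that $\F_1$ is a nonprincipal $\omega_1$-complete ultrafilter on $X^{(1)}$.

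The size bound follows because every $\mathcal{V} \in X^{(1)}$ is isolated in $X^{(1)}$, so one can pick $B_\mathcal{V} \in \mathcal{V}$ with $\langle B_\mathcal{V}\rangle \cap X^{(1)} = \{\mathcal{V}\}$, and distinct $\mathcal{V}$'s yield distinct $B_\mathcal{V}$'s, embedding $X^{(1)}$ into $2^\omega$. For nonprincipality of $\F_1$: if $\{\mathcal{V}_0\} \in \F_1$ were witnessed by $A \in \F$ with $\langle A\rangle \cap X^{(1)} = \{\mathcal{V}_0\}$, then picking any $C \in \F \setminus \mathcal{V}_0$ (available since $\F \neq \mathcal{V}_0$) would produce $A \cap C \in \F$ with $\langle A \cap C\rangle \cap X^{(1)} = \emptyset$, contradicting $\F \in \overline{X^{(1)}}$. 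For $\omega_1$-completeness, I invoke the P-point hypothesis: given a countable partition $X^{(1)} = \bigsqcup_n A_n$ with every $A_n \notin \F_1$, the ultrafilter property provides $C_n \in \F$ with $\langle C_n\rangle \cap A_n = \emptyset$, and a P-point pseudo-intersection $C \in \F$ of $\{C_n\}$ satisfies $C \notin \mathcal{V}$ for all $\mathcal{V} \in X^{(1)}$ (if $\mathcal{V} \in A_n$, then $C_n \notin \mathcal{V}$, and $C \subseteq^* C_n$ combined with the nonprincipality of $\mathcal{V}$ forces $C \notin \mathcal{V}$). Hence $\langle C\rangle \cap X^{(1)} = \emptyset$, again contradicting $\F \in \overline{X^{(1)}}$.

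Finally, the Ulam argument underlying Theorem~\ref{classic} forces the cardinality of any set carrying a nonprincipal $\omega_1$-complete ultrafilter to be at least the first measurable cardinal, which is strongly inaccessible and therefore strictly greater than $\mathfrak{c}$. This contradicts $|X^{(1)}| \leq \mathfrak{c}$, completing the proof. I expect the $\omega_1$-completeness verification to be the main technical step, as it is the only place where the P-point hypothesis is used and where a countable partition of $X^{(1)}$ has to be coordinated with a pseudo-intersection of sets in $\F$ on $\omega$.
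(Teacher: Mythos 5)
Your proof is correct and follows essentially the same route as the paper: use the P-point property of $\F$, together with the fact that $X^{(1)}\subseteq\omega^*$ consists of free ultrafilters, to show that the trace $\F_1$ is a nonprincipal $\omega_1$-complete ultrafilter on a set of size at most $\mathfrak c$, contradicting the Ulam/measurable-cardinal bound (the paper phrases completeness via countable intersections rather than partitions, but this is the same argument). The only differences are that you explicitly supply the routine pieces the paper leaves implicit — the witness for $\mathfrak h(\F)>1$, the nonprincipality of $\F_1$, and the justification of $|X^{(1)}|\leq\mathfrak c$ — which is fine.
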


\begin{proof}
To derive a contradiction, assume that there exists a scattered subspace $X\subset \beta(\omega)$ and a P-point $\F\in \omega^*$ such that $\F\in X^{(2)}$ and $\F$ traces on the set $X^{(1)}$ an ultrafilter which we denote by $\F_1$. Consider any countable family $\{H_n:n\in\omega\}\subset \F_1$. By the definition of $\F_1$, for every $n\in\omega$ there exists $F_n\in\F$ such that $\langle F_n\rangle \cap X^{(1)}\subset H_n$. Since $\F$ is a P-point, there exists $F\in\F$ such that $F\subset ^* F_n$ for any $n\in\omega$. Taking into account that $X^{(1)}$ consists of free ultrafilters, we obtain that $\langle F\rangle\cap X^{(1)}\subset \langle F_n\rangle\cap X^{(1)}\subset H_n$ for every $n\in\omega$. Thus, $\langle F\rangle\cap X^{(1)}\subset \bigcap_{n\in\omega}H_n$, witnessing that the ultrafilter $\F_1$ is $\omega_1$-complete. By~\cite[Lemma 10.2]{Jec}, the cardinality of the set $X^{(1)}$ is greater or equal to some measurable cardinal. On the other hand, $|X^{(1)}|\leq \mathfrak{c}$ which implies a contradiction.  
 \end{proof}

 Note that the above lemma does not immediately follow from the fact that P-points are not in the closure of any countable set, as $X^{(1)}$ can be uncountable.

For an ordinal $\alpha$, let $S^{*}(\alpha)=\alpha$ if $\alpha$ is finite and $S^{*}(\alpha)=\alpha+1$ otherwise. Let us note that $S^{*}(\alpha)$ equals the order type of the set $\{\delta: 0<\delta<\alpha+1\}$. 
The following two results justify Definition~\ref{def}.

\begin{proposition}\label{prneww}
Let $\F$ be an ultrafilter on $\kappa$ of height $>\alpha$ and $X$ be a scattered subspace of $\beta(\kappa)$, which witnesses $\mathfrak h(\F)>\alpha$. Then $\mathcal N(\F)$ traces on $Y=X\setminus\{\F\}$ a free regular open filter $\Phi$, and the subposet $\{\G\in \mathbf{OF}(Y): \Phi\subseteq \G\}$ of $\mathbf{OF}(Y)$ is order isomorphic to the ordinal $S^{*}(\alpha)$ endowed with the reversed order.
\end{proposition}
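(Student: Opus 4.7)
The plan is to verify the claimed properties of $\Phi$ and then produce an explicit anti-isomorphism $\gamma\mapsto\G_\gamma$, with $\gamma$ ranging over the ordinals $\{1,\ldots,\alpha\}$, whose domain has order type $S^{*}(\alpha)$.

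First I would check that $\Phi$ is a free regular open filter on $Y$. The basic neighborhoods $\langle F\rangle$ of $\F$ in $\beta(\kappa)$ are clopen, so their traces $\langle F\rangle\cap Y$ form a clopen base of $\Phi$ in $Y$. Since $\Int(\overline{B})=B$ for every clopen $B$, this base is inherited by $\{\Int(\overline{G}):G\in\Phi\}$, giving regular openness. Freeness is immediate: given $y\in Y$, there is $A\in y$ with $\kappa\setminus A\in\F$, so $\langle A\rangle\cap Y$ is an open neighborhood of $y$ whose complement in $Y$ is an element of $\Phi$.

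Next I would define, for each $\gamma$ with $0<\gamma\leq\alpha$, the set $U_\gamma=\bigcup_{\xi<\gamma}X^{(\xi)}=X\setminus X^\gamma$, which is open in $X$ (and in $Y$) since $X^\gamma$ is a Cantor--Bendixson derived set. Let $\G_\gamma$ be the open filter on $Y$ generated by $\Phi\cup\{U_\gamma\}$, with canonical base $\{\langle F\rangle\cap U_\gamma:F\in\F\}$. Each base element is nonempty as it contains $\langle F\rangle\cap X^{(0)}\in\F_0$, and freeness is inherited from $\Phi$; note $\G_\alpha=\Phi$. If $0<\gamma_1<\gamma_2\leq\alpha$, then $U_{\gamma_1}\subsetneq U_{\gamma_2}$ gives $\G_{\gamma_2}\subseteq\G_{\gamma_1}$, and this inclusion is strict because the ultrafilter $\F_{\gamma_1}$ makes $\langle F\rangle\cap X^{(\gamma_1)}$ nonempty for every $F\in\F$, so no basic element $\langle F\rangle\cap U_{\gamma_2}$ can fit inside $U_{\gamma_1}$. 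Thus $\gamma\mapsto\G_\gamma$ is a strict order-reversing embedding.

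The main task will be to show this embedding is surjective. Given a free open filter $\G\supseteq\Phi$, I would set $\gamma^{*}:=\min\{\xi\leq\alpha:U_\xi\in\G\}$; this is well defined with $\gamma^{*}\in\{1,\ldots,\alpha\}$, since $U_\alpha=Y\in\G$ and $U_0=\emptyset\notin\G$. The inclusion $\G\supseteq\G_{\gamma^{*}}$ is immediate, and the reverse rests on the following key lemma: \emph{if $G\subseteq Y$ is open and $G\cap X^{(\xi)}=\emptyset$, then $G\subseteq U_\xi$.} I would prove this by transfinite induction on $\delta\in[\xi,\alpha)$ that $G\cap X^{(\delta)}=\emptyset$. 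At a successor $\delta+1$, every $p\in X^{(\delta+1)}$ has a neighborhood $V$ with $V\cap X^{\delta+1}=\{p\}$, in which $p$ is an accumulation point of $V\cap X^{(\delta)}$; so $p\in G$ would force $G$ to meet $X^{(\delta)}$, contradicting the hypothesis. At a limit $\delta$, any $p\in X^{(\delta)}$ accumulates at $X^{\xi}$, while in a witnessing $V$ with $V\cap X^\delta=\{p\}$ the points of $V\cap X^\xi\setminus\{p\}$ lie at levels in $[\xi,\delta)$, all killed by the hypothesis. Granting the lemma, for each $\xi<\gamma^{*}$ no element of $\G$ can miss $X^{(\xi)}$ (else $U_\xi\in\G$, contradicting the minimality of $\gamma^{*}$), so the trace of $\G$ on $X^{(\xi)}$ is a proper filter extending $\F_\xi$ and therefore equals it. For an open $W\in\G$, the selector $F_\xi:=W\cap X^{(\xi)}\in\F_\xi$ for $\xi<\gamma^{*}$ has union $W\cap U_{\gamma^{*}}$, which is open in $X$ because $U_{\gamma^{*}}$ is open in $X$ and $W$ extends to an open subset of $X$. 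Property~(2) of Definition~\ref{def} with $\gamma=\gamma^{*}$ then produces $F\in\F$ with $\langle F\rangle\cap U_{\gamma^{*}}\subseteq W$, proving $W\in\G_{\gamma^{*}}$ and hence $\G=\G_{\gamma^{*}}$.

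The hard part is the key lemma: it is the only place where the scattered structure of $X$ is genuinely exploited, forbidding open sets from ``skipping'' a Cantor--Bendixson level while still touching higher ones. Once this geometric input is available, condition~(2) of Definition~\ref{def} has been engineered to convert level-wise trace data into a single generator $\langle F\rangle\cap U_{\gamma^{*}}$ of $\G_{\gamma^{*}}$ sitting inside $W$, making the final identification $\G=\G_{\gamma^{*}}$ almost automatic.
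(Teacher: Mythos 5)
Your proposal is correct and takes essentially the same route as the paper: the filters $\G_\gamma$ generated by $\Phi\cup\{\bigcup_{\xi<\gamma}X^{(\xi)}\}$ are exactly the paper's $\G_\delta$, and the classification of an arbitrary $\G\supseteq\Phi$ via a minimal level, the ultrafilter traces $\F_\xi$, and condition (2) of Definition~\ref{def} is the paper's argument. The ``key lemma'' you isolate is precisely the step the paper leaves implicit (an open set missing a Cantor--Bendixson level is contained in the union of the lower levels), and it admits a one-line proof: if an open $G$ meets $X^{\xi}$, then $G\cap X^{\xi}$ is a nonempty relatively open subset of the scattered space $X^{\xi}$ and hence contains an isolated point of $X^{\xi}$, i.e.\ a point of $X^{(\xi)}$.
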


\begin{proof}
%Let $\Phi$ be the trace of $\mathcal N(\F)$ on $Y$. 
Since the space $X$ is regular and $\F\notin Y$, $\Phi$ is a free regular open filter. %By Proposition~\ref{supernew}, it is enough to check that the poset of all free open filters on $Y$ which contain $\Phi$ is order isomorphic to $(\alpha+1,\geq)$. 
For each nonzero ordinal $\delta<\alpha+1$ let $\G_\delta$ be the filter on $Y$ generated by the family $\Phi\cup\{\bigcup_{\xi\in\delta}X^{(\xi)}\}$. Since the space $Y$ is scattered, for each $0<\delta<\alpha+1$ the set $\bigcup_{\xi\in\delta}X^{(\xi)}$ is open. Thus, for every $0<\delta<\alpha+1$ the filter $\G_\delta$ is open. Since $\Phi$ is a free filter, we get that so are the filters $\G_\delta$, where $0<\delta<\alpha+1$.

Let $\G$ be an arbitrary free open filter on $Y$ such that $\Phi\subseteq \G$. Put $$\delta=\min\{\xi\in\alpha+1: \hbox{exists } G\in\G \hbox{ such that } G\cap X^{(\xi)}=\emptyset\}.$$ 
Since $X^{(0)}$ is an open dense subset of $Y$ and the filter $\G$ is open, $G\cap X^{(0)}\neq \emptyset$ for all $G\in\G$. Therefore, $\delta>0$.
We claim that $\G=\G_\delta$. The inclusion $\G_\delta\subseteq \G$ follows from the definition of $\delta$. In order to show the converse inclusion, fix any  $G\in\G$. Since the filter $\G$ is open and by the definition of $\delta$, we lose no generality assuming that the set $G$ is open and $G\subseteq \bigcup_{\xi\in\delta}X^{(\xi)}$. Since $\Phi\subseteq \G$, for every $\xi\in\delta$ the trace of $\G$ on $X^{(\xi)}$ is the ultrafilter $\F_{\xi}$ (see Definition~\ref{def}). Then for each $\xi\in\delta$ the set $F_\xi=G\cap X^{(\xi)}\in \F_{\xi}$. Since $\mathfrak{h}(\F)> \alpha$ (see condition (2) in Definition~\ref{def}), there exists $F\in\F$ such that $\langle F\rangle\cap \bigcup_{\xi\in\delta}X^{(\xi)}\subset G$. 
It remains to observe that $\langle F\rangle\in\mathcal N(\F)$, $\langle F\rangle\cap Y\in \Phi$ and 
$$G\supseteq \langle F\rangle\cap  \bigcup_{\xi\in\delta}X^{(\xi)}=\langle F\rangle\cap Y\cap \bigcup_{\xi\in\delta}X^{(\xi)}\in\G_\delta.$$ Thus, $G\in \G_\delta$ which implies that  $\G=\G_\delta$. Hence $$\{\G\in \mathbf{OF}(Y): \mathcal N(\F)\subseteq \G\}=\{\G_{\delta}:0<\delta<\alpha+1\}.$$
It is easy to see that the subposet $\{\G_{\delta}:0<\delta<\alpha+1\}\subset \mathbf{OF}(Y)$ is order isomorphic to $(S^{*}(\alpha),\geq)$. 
\end{proof}

Propositions~\ref{supernew} and~\ref{prneww} imply the following.

\begin{proposition}\label{prnew}
Let $\F$ be an ultrafilter on $\kappa$ of height $>\alpha$. Then there exists a scattered space $Y$ such that $\mathbf{OF}(Y)$ is order isomorphic to the ordinal $S^*(\alpha)$ endowed with the reversed order.
\end{proposition}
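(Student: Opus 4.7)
The plan is to compose Propositions~\ref{supernew} and~\ref{prneww}. First, I would fix a scattered subspace $X\subseteq\beta(\kappa)$ witnessing $\mathfrak{h}(\F)>\alpha$ and pass to $X_0=X\setminus\{\F\}$, which remains scattered as a subspace of a scattered space. By Proposition~\ref{prneww}, the trace $\Phi$ of $\mathcal{N}(\F)$ on $X_0$ is a free regular open filter, and the subposet $\mathcal{P}=\{\G\in\mathbf{OF}(X_0):\Phi\subseteq\G\}$ of $\mathbf{OF}(X_0)$ is order isomorphic to $(S^*(\alpha),\geq)$.

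Next, I would feed $(X_0,\Phi)$ into Proposition~\ref{supernew} to obtain a space $Y$ whose poset $\mathbf{OF}(Y)$ is order isomorphic to $\mathcal{P}$, and therefore to $(S^*(\alpha),\geq)$. The only remaining point is to verify that this $Y$ can be taken to be scattered. Inspecting the proof of Proposition~\ref{supernew} shows that $Y=\mathrm{M}((X_0)_\Phi,\Phi)$, where $(X_0)_\Phi$ denotes $X_0$ with a new point $\Phi$ attached whose neighborhood base consists of the sets $F\cup\{\Phi\}$ for open $F\in\Phi$. Since the neighborhoods of $\F$ in $X\subseteq\beta(\kappa)$ are precisely the sets $\langle F\rangle\cap X$ for $F\in\F$, the bijection fixing $X_0$ pointwise and sending $\Phi\mapsto\F$ is a homeomorphism $(X_0)_\Phi\to X$. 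Hence $(X_0)_\Phi$ inherits scatteredness from $X$, and the scatteredness clause of Construction~\ref{cons} then yields that $Y=\mathrm{M}((X_0)_\Phi,\Phi)$ is scattered as well.

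The main obstacle, as far as it can be called one, is the topological identification $(X_0)_\Phi\cong X$ that transfers scatteredness from the ambient subspace of $\beta(\kappa)$ to the space produced by Mooney's construction; once this is observed, the statement is immediate from the two cited propositions.
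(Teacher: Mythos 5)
Your proposal is correct and follows exactly the route the paper intends: the paper derives Proposition~\ref{prnew} simply by citing Propositions~\ref{supernew} and~\ref{prneww}, which is precisely your composition. Your extra verification that $(X_0)_\Phi$ is homeomorphic to $X$ and hence that $Y=\mathrm{M}((X_0)_\Phi,\Phi)$ is scattered fills in the one detail the paper leaves implicit (the scatteredness clause of Construction~\ref{cons}), and it is correct.
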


\begin{theorem}\label{thnew}
For each positive integer $n$ there exists an ultrafilter $\F$ on $\w$ such that $\mathfrak{h}(\F)> n$.
\end{theorem}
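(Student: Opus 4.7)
The plan is induction on $n$. The base case $n=1$ is immediate: for any free ultrafilter $\F$ on $\w$, the subspace $X := \w\cup\{\F\}\subseteq\beta(\w)$ is scattered with $X^{(0)}=\w$, $X^{(1)}=\{\F\}$; the trace of $\mathcal N(\F)$ on $\w$ is $\F$ itself, and the $\gamma=1$ selector condition is met by $F:=F_0$.

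For the inductive step, assume $\mathfrak h(\mathcal V) > n$ is witnessed by $Y\subseteq\beta(\w)$ with $Y^{(n)}=\{\mathcal V\}$. I would identify $\w$ with $\w\times\w$, set $\w_k := \{k\}\times\w$, let $Y_k\subseteq\beta(\w_k)\subseteq\beta(\w\times\w)$ be the image of $Y$ under the canonical homeomorphism $\beta(\w)\to\beta(\w_k)$, and write $\mathcal V_k\in Y_k$ for the image of $\mathcal V$. Fix a free ultrafilter $\mathcal W$ on $\w$ and define the classical sum of the $\mathcal V_k$'s along $\mathcal W$,
\[
\F := \bigl\{A\subseteq\w\times\w : \{k\in\w : A\cap\w_k\in\mathcal V_k\}\in\mathcal W\bigr\}.
\]
I claim $X := \bigcup_k Y_k\cup\{\F\}\subseteq\beta(\w\times\w)$ witnesses $\mathfrak h(\F)>n+1$ (and $\F$, transported to $\w$ via any bijection, is the required ultrafilter).

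The verification splits into three parts. First, since $\langle\w_k\rangle$ are pairwise disjoint clopen with $Y_k\subseteq\langle\w_k\rangle$, the columns $Y_k$ do not interfere, giving $X^{(\xi)}=\bigsqcup_k Y_k^{(\xi)}$ for $\xi\leq n$; freeness of $\mathcal W$ forces $\w_k\notin\F$ and the defining formula of $\F$ forces $\F$ to accumulate $\{\mathcal V_k\}_{k\in\w}$, whence $X^{(n+1)}=\{\F\}$. Second, the trace $\F_\xi$ of $\mathcal N(\F)$ on $X^{(\xi)}$ is an ultrafilter: at $\xi=n$ it coincides with $\mathcal W$ under $\mathcal V_k\leftrightarrow k$, and for $\xi<n$ the ultrafilter property of the induction-given traces $\F_\xi^k$ on $Y_k^{(\xi)}$ lets us decide any $S\subseteq X^{(\xi)}$ column by column, with the $\mathcal W$-set $\{k: S\cap Y_k^{(\xi)}\in\F_\xi^k\}$ (or its complement) giving a $\mathcal W$-set of columns for which a disjoint union of in-column witnesses produces an $F\in\F$. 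Third, the selector condition: for $0<\gamma\leq n+1$ and a selector with open union $U$ in $X$, extracting a generator in $\F$ of $F_\xi$ shows $\{k: F_\xi\cap Y_k^{(\xi)}\in\F_\xi^k\}\in\mathcal W$ for $\xi<\min(\gamma,n)$; intersect these finitely many $\mathcal W$-sets (and, when $\gamma=n+1$, with $E:=\{k:\mathcal V_k\in F_n\}\in\mathcal W$) to get $K'\in\mathcal W$; for each $k\in K'$ find $G_k\in\mathcal V_k$ supported in $\w_k$ with $\langle G_k\rangle\cap Y_k\subseteq U$ (via the induction-hypothesis selector condition in $Y_k$ when $\gamma\leq n$, and via openness of $U$ at $\mathcal V_k$ in $X$ when $\gamma=n+1$); then $F:=\bigsqcup_{k\in K'}G_k$ lies in $\F$ and satisfies $\langle F\rangle\cap\bigcup_{\xi<\gamma}X^{(\xi)}\subseteq U$.

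The main obstacle I foresee is the $\gamma=n+1$ case of the selector condition, where the selector reaches the top pre-limit level $X^{(n)}=\{\mathcal V_k\}$. The induction alone only supplies a selector condition of length at most $n$ inside each $Y_k$, insufficient to force the isolated-in-$X^{(n)}$ point $\mathcal V_k$ into a column-wise open approximation. The remedy is to sidestep the $Y_k$-selector here and invoke openness of $U$ at $\mathcal V_k$ in $X$ directly: this gives $H_k\in\mathcal V_k$ (which one may take supported in $\w_k$) with $\langle H_k\rangle\cap Y_k\subseteq U$ that simultaneously covers both $\mathcal V_k$ and all lower-level points of $Y_k$ lying in $\langle H_k\rangle$, exactly what is needed to close the induction.
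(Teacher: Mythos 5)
Your proof is correct, but it takes a genuinely different route from the paper's. The paper argues directly rather than by induction on $n$: it builds a tower $X_0=\w$, $X_{i+1}$ a countably infinite discrete subset of $\cl_{\beta(\w)}(X_i)\setminus X_i$, picks $\F\in\cl_{\beta(\w)}(X_{n-1})\setminus X_{n-1}$, and invokes the fact that countable discrete subsets of $\beta(\w)$ are strongly discrete, so that by Lemma~\ref{d} the closure of each level is a copy of $\beta(\w)$ and the trace of $\mathcal N(\F)$ on each $X^{(i)}$ is automatically an ultrafilter; condition (2) of Definition~\ref{def} is then dispatched in one line, since for a selector of finite length one picks for each $i<m$ a generator $G_i\in\F$ with $\langle G_i\rangle\cap X^{(i)}\subseteq F_i$ and intersects the finitely many $G_i$. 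Your inductive construction, placing copies of the previous witness in disjoint clopen pieces $\langle\w_k\rangle$ and taking the $\mathcal W$-sum of the top points $\mathcal V_k$, is a legitimate alternative (it is close in spirit to the sum construction the paper later uses over a measurable cardinal in Proposition~\ref{pm}), and your column-by-column verification that the traces of $\mathcal N(\F)$ are ultrafilters replaces the paper's appeal to strong discreteness. One remark: the ``main obstacle'' you single out, the $\gamma=n+1$ case of the selector condition, is not an obstacle at all --- for finite $\gamma$ condition (2) follows immediately from closure of $\F$ under finite intersections, exactly as above, with no case split on $\gamma$ and no use of the openness of $U$; your openness-based fix is correct but unnecessary, as is the appeal to the inductive selector condition inside the columns. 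In short, the paper's route buys brevity via the \v{C}ech--Stone extension lemma, while yours buys a self-contained, hands-on verification at the cost of extra bookkeeping.
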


\begin{proof}
Fix any positive integer $n$. First we inductively construct a scattered subspace $X\subset\beta(\w)$. Let $X_0=\omega$. Assume that an infinite countable discrete subset $X_i\subset\beta(\omega)$ is already constructed for some $i< n-1$. Then put $X_{i+1}$ to be any countable infinite discrete subset of $\cl_{\beta(\w)}(X_i)\setminus X_i$. This way we construct the sets $X_i$ for every $i\leq n-1$. Fix any ultrafilter $\F\in \cl_{\beta(\w)}(X_{n-1})\setminus X_{n-1}$ and let $X$ be the subset $\bigcup_{i\in n}X_i\cup\{\F\}\subset \beta(\omega)$ endowed with the subspace topology. One can easily check that $X$ is scattered, $X^{(n)}=\{\F\}$ and $X^{(i)}=X_i$ for each $i<n$. Let us verify that $X$ is a witness for $\mathfrak{h}(\F)>n$. Fix any $i<n$. It is easy to check that each countable discrete subset of $\beta(\omega)$ is strongly discrete. Hence the set $X^{(i)}$ is strongly discrete.
By Lemma~\ref{d}, there exists a homeomoprhism $h:\overline{X^{(i)}}\to \beta(\omega)$ such that $h(X^{(i)})=\w$. Since $\F\in \cl_{\beta(\w)}(X^{(i)})$ we obtain that the filter $\mathcal N(\F)$ traces an ultrafilter on $X^{(i)}$, which we denote by $\F_i$. Hence condition (1) from Definition~\ref{def} is satisfied. In order to check condition (2) fix any $m\leq n$, and for each $i<m$ fix a set $F_i\in \F_i$. Since $\{\langle F\rangle: F\in\F\}$ is an open neighborhood base at $\F$, for each $i<m$ there exists $G_i\in\F$ such that $\langle G_i\rangle\cap X^{(i)}\subseteq F_i$. Then $\langle \bigcap_{i<m}G_i\rangle\cap \bigcup_{i<m}X^{(i)}\subseteq \bigcup_{i<m}F_i$, which fulfills condition (2) from Definition~\ref{def}. Hence $\mathfrak h (\F)>n$. 
\end{proof}

Proposition~\ref{prnew} and Theorem~\ref{thnew} imply the following:

\begin{theorem}\label{t1}
For each positive integer $n$ there exists a scattered space $X$ such that $\mathbf{OF}(X)$ is an $n$-element chain.
\end{theorem}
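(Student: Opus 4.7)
The plan is to combine the two results immediately preceding. Given a positive integer $n$, Theorem~\ref{thnew} supplies an ultrafilter $\F$ on $\omega$ with $\mathfrak h(\F)>n$. Feeding $\F$ and $\alpha := n$ into Proposition~\ref{prnew} produces a scattered space $X$ whose poset of free open filters is order isomorphic to $(S^*(n),\geq)$.

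It then remains only to read off the cardinality of $S^*(n)$. By the definition stated just before Proposition~\ref{prneww}, one has $S^*(n)=n$ for every finite $n$, so $(S^*(n),\geq)$ is the ordinal $\{0,1,\dots,n-1\}$ with the reversed order; as an abstract partially ordered set this is simply an $n$-element chain (the direction of a linear order is immaterial up to order isomorphism). No extra construction is required.

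The genuine work — exhibiting a scattered subspace of $\beta(\omega)$ whose Cantor--Bendixson derivatives descend through $n$ levels of traced ultrafilters satisfying the coherence condition (2) of Definition~\ref{def}, and packaging such data into a topological space via Mooney's Construction~\ref{cons} and Proposition~\ref{supernew} — has already been absorbed into Theorem~\ref{thnew} and Propositions~\ref{prneww} and~\ref{prnew}; thus the only conceptual step of the present proof is to observe that for finite $\alpha$ the ordinal $S^*(\alpha)$ coincides with $\alpha$. Accordingly, the main obstacle was cleared earlier: it was the inductive construction inside $\beta(\omega)$ in Theorem~\ref{thnew}, where one must choose, at each step $i<n-1$, a countable infinite discrete subset of $\cl_{\beta(\omega)}(X_i)\setminus X_i$ and then verify that the resulting $X^{(i)}$'s are strongly discrete so that Lemma~\ref{d} identifies each $\overline{X^{(i)}}$ with $\beta(\omega)$, yielding condition (1) of Definition~\ref{def}, while condition (2) follows from finite intersection of neighborhoods $\langle F_i\rangle$.
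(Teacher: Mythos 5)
Your proposal is correct and is exactly the paper's own argument: the paper derives Theorem~\ref{t1} as an immediate consequence of Theorem~\ref{thnew} combined with Proposition~\ref{prnew}, using that $S^*(n)=n$ for finite $n$ so that $(S^*(n),\geq)$ is an $n$-element chain. Nothing further is needed.
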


\begin{remark}
Theorem~\ref{t1} implies the affirmative answer to Questions~\ref{q1} and \ref{q2}. Namely, 
fix a positive integer $n$ and a space $X_n$ such that the poset $\mathbf{OF}(X_n)$ is order isomorphic to $n$. By Theorem~\ref{char}, the poset $\mathbf{H}(X_n)$ is order isomorphic to $n$ implying the affirmative answer to Question~\ref{q1}. In the proof of the implication (1) $\Rightarrow$ (2) in Theorem~\ref{char} we showed that each Hausdorff extension of an almost H-closed space has a singleton remainder. It follows that for each $Y\in\mathbf{H}(X_2)$ the set $Y\setminus X_2$ is singleton. This implies the affirmative answer to Question~\ref{q2}.  
\end{remark}

A subset $B$ of a scattered space $X$ is called {\em high} if $X^{(\alpha)}\cap B\neq \emptyset$ for every $\alpha\in ht(X)$.
 
\begin{theorem}\label{CH}
(CH) There exists an ultrafilter $\F$ on $\omega$ such that $\mathfrak h(\F)> \omega$. 
\end{theorem}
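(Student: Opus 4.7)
The plan is to construct, under CH, an ultrafilter $\F$ on $\omega$ together with a scattered subspace $X=\{\F\}\cup\bigcup_{n<\omega}X_n$ of $\beta(\omega)$ witnessing $\mathfrak{h}(\F)>\omega$, by a recursion of length $\omega_1$. First, the skeleton: set $X_0=\omega$ and inductively pick $X_{n+1}$ to be a countable strongly discrete subset of $\overline{X_n}\setminus X_n$, possible by Lemma~\ref{d} since $\overline{X_n}\setminus X_n$ is homeomorphic to $\omega^*$. We take care to choose each $X_{n+1}$ so that the clopen sets witnessing strong discreteness of $X_n$ in $\beta(\omega)$ also miss $X_{n+1}\cup X_{n+2}\cup\cdots$, ensuring that for any $\F\in\bigcap_n\overline{X_n}\setminus\bigcup_nX_n$ (this intersection is nonempty by compactness and nestedness of the $\overline{X_n}$), the subspace $X$ is scattered with $X^{(n)}=X_n$ and $X^{(\omega)}=\{\F\}$. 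Such an $\F$ automatically satisfies condition (1) of Definition~\ref{def} (the trace of $\mathcal{N}(\F)$ on each $X_n$ is an ultrafilter $\F_n$), and condition (2) for finite $\gamma$ follows as in the proof of Theorem~\ref{thnew}.

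The heart of the argument is selecting $\F\in\bigcap_n\overline{X_n}$ satisfying condition (2) for $\gamma=\omega$. Unpacked, this requires: for every $Z\subseteq\bigcup_nX_n$ that is closed in $X$ with $Z\cap X_n\notin\F_n$ for every $n$, we have $\F\notin\overline{Z}$ (closure in $\beta(\omega)$). Since $X_n\subseteq\omega^*$ for $n\geq 1$, this is implied by the following pseudo-intersection property of $\F$: whenever $(G_n)_n\subseteq\F$ is a countable family with $\langle G_n\rangle\cap Z\cap X_n=\emptyset$ for every $n$, there exists $G\in\F$ with $G\subseteq^* G_n$ for all $n$.

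Under CH, enumerate $\mathcal{P}(\omega)=\{A_\alpha:\alpha<\omega_1\}$ and the $\omega_1$-many closed-in-$X$ subsets of $\bigcup_nX_n$ as $\{Z_\alpha:\alpha<\omega_1\}$. Build by recursion a $\subseteq^*$-decreasing sequence $(E_\alpha)_{\alpha<\omega_1}$ of infinite subsets of $\omega$ generating $\F$, maintaining at stage $\alpha$: (i) $E_\alpha\subseteq^* A_\alpha$ or $E_\alpha\cap A_\alpha$ is finite (so $\F$ is an ultrafilter); (ii) $E_\alpha\in y$ for some $y\in X_n$ for each $n<\omega$ (so $\F\in\overline{X_n}$); and (iii) if at stage $\alpha$ some set in the current approximate filter $\langle\cdot\rangle$-separates $Z_\alpha\cap X_n$ for every $n$, then $\langle E_\alpha\rangle\cap Z_\alpha=\emptyset$. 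Obstructions $Z_\alpha$ that are not yet ``ready'' at stage $\alpha$ are rescheduled and revisited at later stages, a standard bookkeeping trick ensuring that every genuinely relevant obstruction is eventually handled.

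The main obstacle is reconciling (ii) and (iii). A full P-point property for $\F$ would violate (ii), since P-points are not accumulation points of any countable subset of $\omega^*$; yet (iii) demands pseudo-intersections of certain countable subfamilies of $\F$. The resolution relies on the fact that only $\omega_1$-many obstructions need attention and that at each stage the approximate filter is countably generated, hence admits pseudo-intersections internally. For each obstruction, the hypothesis of (iii) supplies separating sets $G_n^\alpha$ (one per level $n$) whose pseudo-intersection becomes $E_\alpha$; requirement (ii) can simultaneously be preserved because $Z_\alpha\cap X_n\notin\F_n^{(\alpha)}$ leaves room in $X_n$ outside $Z_\alpha$ for $E_\alpha$ to still meet $X_n$ at some ultrafilter. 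The careful interleaving and compatibility verification of (i)--(iii) at every stage constitute the technical core of the proof.
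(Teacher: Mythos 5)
Your construction cannot work as designed, because the ultrafilter you build is forced to be a P-point, and a P-point cannot satisfy your own requirement (ii). You stipulate that $\F$ be generated by a $\subseteq^*$-decreasing sequence $(E_\alpha)_{\alpha<\omega_1}$; such a sequence is a base of $\F$ linearly ordered by $\subseteq^*$, so given countably many $A_k\in\F$ one picks $\alpha_k$ with $E_{\alpha_k}\subseteq^*A_k$ and then $E_{\sup_k\alpha_k}\in\F$ is a pseudo-intersection of the $A_k$; hence $\F$ is a P-point. P-points are weak P-points: if $D=\{d_k:k\in\omega\}\subseteq\omega^*$ and $\F\notin D$, choose $A_k\in\F$ with $\omega\setminus A_k\in d_k$ and a pseudo-intersection $A\in\F$; then $\langle A\rangle$ is a neighbourhood of $\F$ missing $D$. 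Since your $X_1$ is a countable subset of $\omega^*$ not containing $\F$, requirement (ii) (that $\F\in\overline{X_n}$ for all $n$) already fails at $n=1$; in fact, by the lemma preceding Theorem~\ref{thnew}, any P-point has $\mathfrak h(\F)=2$, the opposite of what you want. Your paragraph acknowledging this tension does not resolve it: no bookkeeping of ``relevant'' obstructions helps, because the conflict comes from the $\subseteq^*$-decreasing generating sequence itself, not from which countable families you take pseudo-intersections of. Relatedly, your reduction of condition (2) at $\gamma=\omega$ to ``there is $G\in\F$ with $G\subseteq^*G_n$ for all $n$'' is an over-strengthening that an ultrafilter lying in the closure of countable subsets of $\omega^*$ cannot be expected to meet; the actual proof never produces such a $G$.

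The missing idea is to obtain the conclusion of condition (2) without pseudo-intersections, and the paper's proof arranges the skeleton precisely to make this possible. Instead of a single increasing column with $X_{n+1}\subseteq\overline{X_n}$, it takes the finite-height witnesses $X_n$ from Theorem~\ref{thnew} and places them in pairwise disjoint clopen sets $\langle P_n\rangle$, forming the sum $Y$, with the desired $\F$ sitting at level $\omega$ above $Y$; the recursion maintains only a centered family together with the invariant that $\langle W\rangle$ meets every Cantor--Bendixson level of $Y$ for each $W$ in the current filter. To handle a high open set $U$, one adds a single set $V'=\bigcup_n S_n$, where $S_n$ is chosen inside the $n$-th member $C_n$ of a nested countable base of the current (countably generated) approximation, inside a summand not used before, and so small that $\langle S_n\rangle\cap Y$ lies in levels $\leq n$ and inside $U$. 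Disjointness of the summands gives $\langle V'\rangle\cap Y=\bigcup_n(\langle S_n\rangle\cap Y)\subseteq U$, even though $V'$ is not almost contained in any earlier filter element (its initial pieces $S_0,\dots,S_{m-1}$ escape $C_m$) --- which is exactly how ``$\F$ lies in the closure of every level'' survives the recursion. In your single-column skeleton this decomposition fails, since an ultrafilter containing $\bigcup_n S_n$ need not contain any single $S_n$, and you offer no substitute mechanism for producing the single $F$ required by condition (2); together with the P-point obstruction above, this is a genuine gap.
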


\begin{proof}
Observe that in the proof of Theorem~\ref{thnew}
for every $n\in\mathbb N$ we constructed a countable scattered subspace $X_n\subset\beta(\omega)$ which satisfies the following conditions:

\begin{itemize}
    \item  $ht(X_{n})=n+1$ and $X_{n}^{(n)}=\{\F_{n}\}$;
    \item  For each $m\in n$ the Cantor-Bendixson level $X_{n}^{(m)}$ is strongly discrete, implying that the filter $\mathcal N(\F_n)$ traces on $X_{n}^{(m)}$ an ultrafilter. 
\end{itemize}

Consider any partition $\{P_n:n\in\omega\}$ of $\omega$ into disjoint infinite subsets. For each $n\in\omega$, $X_{n}$ is homeomorphic to a subspace of $\langle P_{n}\rangle\cong \beta(\omega)$. 
Let $Y$ be a topological sum of $\{X_{n}:n\in\omega\}$. 
Identify $Y$ with a subspace of $\beta(\omega)$ such that $X_{n}\subset \langle P_{n}\rangle$.
%Since the sets $X_{n}$, $\xi\in \theta$ we get that $|Y_{\theta}|\leq\kappa$. %It is easy to see that $Y$
%Clearly, $\beta(\kappa)$ contains an isomorphic copy of the topological sum $\sqcup_{\xi\in\theta} \beta(|\xi|)$. So, similarly as above we identify the space $Y_{\theta}$ with a subspace of $\beta(\kappa)$.
%Consider the set of all sequences $\langle S_{\xi}: \xi \in\theta\rangle$ such that $S_{\xi}\in X_{\theta}'$ for any $\xi\in\theta$ and the set $\cup_{\xi\in\theta}S_{\xi}$ is open in $X_{\theta}'$.
Using CH, enumerate the set of all high open subsets of $Y$ as $\{U_{\alpha}:\alpha\in \omega_1\}$. Moreover, we assume that each high open subset of $Y$ appears cofinally many times in the enumeration. Also, enumerate all subsets of $\omega$ as $\{B_{\alpha}:\alpha\in\omega_1\}$. %Note that the latter two enumerations require CH. 
The desired ultrafilter $\F$ on $\omega$ will be constructed by recursion of length $\omega_1$. Fix any $\beta\in \omega_1$ and assume that we already constructed a family $\mathcal V_{\beta}=\{V_{\alpha}:\alpha\in\beta\}$ of subsets of $\omega$ which satisfies the following conditions:
\begin{itemize}
    \item[(a)] the family $\mathcal V_{\beta}$ is centered and, thus, generates a filter which we denote by $\mathcal W_{\beta}$; 
   % \item[(b)] for any $W\in\mathcal W_{\beta}$ the set $\langle W\rangle$ is high in $Y_{\theta}$;
    \item[(b)] for any $\alpha\in\beta$ either $B_{\alpha}\in \mathcal W_{\beta}$ or $\omega\setminus B_{\alpha}\in\mathcal W_{\beta}$;
    \item[(c)] for any $W\in \mathcal W_{\beta}$ the set $\langle W\rangle$ is high in $Y$, i.e., $\langle W\rangle \cap Y^{(m)}\neq \emptyset$ for any $m\in\omega$.
     %$K^{m}_W=\{n\in\omega: \langle W\rangle \cap X^{(m)}_{n}\neq \emptyset\}$ is infinite.
\end{itemize}
%It is easy to see that the set $K^{\alpha}_W$ is necessarily cofinal in $\theta$ for any $\alpha\in\theta$ and $W\in\mathcal W_{\beta}$.
The filter $\mathcal W_{\beta}$ is an approximation of the desired filter $\mathcal F$. At stage $\beta$ two cases are possible: 
\begin{itemize}
    \item[(i)] for any $n\in \omega$ there exists $W_{n}\in\mathcal W_{\beta}$ such that $\langle W_{n}\rangle\cap Y^{(n)}\subset U_{\beta}$; 
    \item[(ii)] there exists $n\in \omega$ such that $\langle W\rangle\cap (Y^{(n)}\setminus U_{\beta})\neq \emptyset$  for any $W\in\mathcal W_{\beta}$.
\end{itemize}

If case (ii) holds, then put $V_{\beta}=\omega$. Informally speaking the set $U_{\beta}$ is at this stage ``irrelevant'' for the filter $\mathcal W_{\beta}$.

Assume that case (i) holds. Since the ordinal $\beta$ is countable the filter $\mathcal W_{\beta}$ admits a countable nested base $\mathcal B=\{C_{n}:n\in\omega\}$, i.e., $C_n\subset C_m$ whenever $m\leq n$. Moreover, without loss of generality we can assume that $C_n\subset \bigcap _{i\leq n}W_i$ for any $n\in\omega$.

First we inductively construct an auxiliary strongly discrete set $E_{\beta}=\{e_{n}:n\in\omega\}\subset Y$ and a function $\phi\in\w^\w$. Assume that some $n\in \omega$ we already constructed a set $\{e_k: k\in n\}$ such that $e_k\in \langle C_k\rangle\cap X_{\phi(k)}^{(k)}$ and $\phi(k_1)\neq \phi(k_2)$ for any distinct $k_1,k_2\in n$. 
%Since the set $\langle C_n\rangle$ is high, for any $m\in\omega$ the set $\langle C_n\rangle\cap Y^{m}$ is infinite. 
Taking into account that the set $T=\bigcup_{i\in n}X_{\phi(i)}$ is not high and the set $\langle C_n\rangle$ is high, we get that there exists a point $e_n\in \langle C_n\rangle \cap (Y^{(n)}\setminus T)$. Let $k$ be the unique positive integer such that $e_n\in X_k$ and set $\phi(n)=k$. Recall that we identify $Y$ with the subspace of $\beta(\omega)$, so we consider points $e_n$, $n\in\omega$ as ultrafilters on $\omega$. For every $n\in\omega$ fix any element $S_n\in e_n$ such that $S_n\subset X_{\phi(n)}\cap C_n$ and $\langle S_n\rangle \cap Y^{(n)}=\{e_n\}$, which exists since the set $\langle C_n\rangle\cap X_{\phi(n)}\ni e_n$ is open in $Y$ and the set $Y^{(n)}$ is discrete. Note that the equality $\langle S_n\rangle\cap Y^{(n)}=\{e_n\}$ implies that $\langle S_n\rangle\cap Y\subset \bigcup_{i\leq n}Y^{(i)}$. Finally set $V'_{\beta}=\bigcup_{n\in\omega}S_{n}$. The choice of $S_{n}$, $n\in\omega$ together with the injectivity of the function $\phi$ ensure that %for any $y\in Y$ there exists at most one $n\in \omega$ such that $S_{n}\in y$. %The definition of the space $Y$ implies the following:
$$\langle V_{\beta}'\rangle\cap Y= \bigcup_{n\in\omega}(\langle S_{n}\rangle\cap Y)\subset \bigcup_{n\in\omega}(\langle C_n\rangle\cap \bigcup_{i\leq n}Y^{(i)})\subset \bigcup_{n\in\omega}(\langle \bigcap_{i\leq n} W_{i}\rangle\cap \bigcup_{i\leq n}Y^{(i)})\subset U_{\beta}.$$

%It can be checked that the definition of the function $\phi$ implies that for any $\alpha\in\kappa$ and $W\in\mathcal W_{\beta}$ the set $\{\xi\in\theta: \langle V_{\beta}'\cap W\rangle \cap X_{\xi}^{(\alpha)}\neq \emptyset\}$ has cardinality $\kappa$. 
\begin{claim}\label{claim*}
At least one of the following assertions holds:
\begin{itemize}
    \item[($\dagger$)] for any $W\in\mathcal W_{\beta}$ the set 
    $Z= \langle W\cap V_{\beta}'\cap B_{\beta}\rangle$ is high in $Y$;
    \item[($\dagger\dagger$)] for any $W\in\mathcal W_{\beta}$ the set
    $Z=\langle W\cap V_{\beta}'\cap (\omega\setminus B_{\beta})\rangle$ is high in $Y$.
\end{itemize}
\end{claim}
\begin{proof}
To derive a contradiction assume that both assertions fail. Then there exist $W_1,W_2\in\mathcal W_{\beta}$ and $n_1, n_2\in\omega$ such that for every $m\geq n=\max\{n_1,n_2\}$ the following equalities hold:
$$\langle W_1\cap V_{\beta}'\cap B_{\beta}\rangle\cap Y^{(m)}=\emptyset \quad \hbox{ and } \quad \langle W_2\cap V_{\beta}'\cap (\omega\setminus B_{\beta})\rangle\cap Y^{(m)}=\emptyset.$$ Put $W=W_1\cap W_2$. 
Since $$V_{\beta}'\cap W\subset (W_1\cap V_{\beta}'\cap B_{\beta})\cup (W_2\cap V_{\beta}'\cap (\omega\setminus B_{\beta}))$$ we obtain that for any $m\geq n$
$$\langle V_{\beta}'\cap W\rangle \cap Y^{(m)}\subset \langle W_1\cap V_{\beta}'\cap B_{\beta}\rangle\cap Y^{(m)}\cup \langle W_2\cap V_{\beta}'\cap (\omega\setminus B_{\beta})\rangle\cap Y^{(m)}=\emptyset.$$On the other hand, there exists $m\geq n$ such that $C_m\subset W$ and 
$$e_m\in \langle C_m\rangle\cap \langle V_{\beta}'\rangle\cap Y^{(m)}\subset\langle W\cap V_{\beta}'\rangle\cap Y^{(m)}=\emptyset,$$
which implies a contradiction. 
\end{proof}
If assertion ($\dagger$) of Claim~\ref{claim*} holds, then put $V_{\beta}=V'_{\beta}\cap B_{\beta}$. Otherwise, set $V_{\beta}=V'_{\beta}\cap (\omega\setminus B_{\beta})$. Clearly, the family $\mathcal V_{\beta+1}=\{V_{\alpha}:\alpha\in\beta+1\}$ satisfies the inductive hypothesis. So, after completing the induction we obtain a centered family $\mathcal V_{\omega_1}=\{V_{\alpha}:\alpha\in\omega_1\}$. Let $\F$ be the filter generated by the family $\mathcal V_{\omega_1}$. Fix any subset $B\subset\omega$. There exists $\xi\in\omega_1$ such that $B=B_{\xi}$. By the construction of the family $\mathcal V_{\omega_1}$, either $V_{\xi} \subset B_{\xi}$ or $V_{\xi}\subset \omega\setminus B_{\xi}$. Thus, either $B\in\F$ or $\omega\setminus B\in\F$, witnessing that $\F$ is an ultrafilter. Let $M=Y\cup\{\F\}$ be the subspace of $\beta(\omega)$. The definition of $\F$ (see condition (c)) implies that $M^{(\omega)}=\{\F\}$. By $\F_{n}$ we denote the trace of $\mathcal N(\F)$ on $M^{(n)}$. Observe that the definition of $Y$ implies that for each $n\in\omega$ the set $M^{(n)}=Y^{(n)}$ is strongly discrete. Hence $\F_n$ is an ultrafilter for every $n\in\omega$.
%as a topological sum of strongly discrete (possible empty) sets $X_{\alpha}^{(\xi)}$, $\alpha\in\theta$.

So, to prove that $\mathfrak{h}(\F)> \omega$ it remains to show that for any $0<\gamma\leq \omega$, for each selector $\langle F_{\xi}\in \F_{\xi}: \xi\in \gamma\rangle$ such that the set $U=\bigcup_{\xi\in \gamma}F_{\xi}$ is open in $M$, there exists $F\in \F$ satisfying $\langle F\rangle \cap (\bigcup_{\xi\in\gamma}M^{(\xi)})\subset U$. It is easy to see that for $\gamma<\omega$ the latter condition is automatically fulfilled, as the filter $\F$ is closed under finite intersections. Consider any selector $\langle F_{n}\in \F_{n}:n\in\omega\rangle$ such that the set $U=\bigcup_{n\in\omega}F_{n}$ is open in $M$. Since the set $U$ is open and high in $Y$ there exists a cofinal subset $\Xi\subset \omega_1$ such that $U=U_{\xi}$ for any $\xi\in\Xi$. For each $n\in \omega$ there exists a basic open neighborhood $\langle O_{n}\rangle$ of $\F$ which witnesses that $F_{n}\in \F_{n}$, that is, $\langle O_{n}\rangle\cap M^{(n)}= \langle O_{n}\rangle\cap Y^{(n)}\subset F_{n}$.  By the construction of $\F$ for each $n\in \omega$ there exists $m(n)\in\omega$ and a family $\{V_{\xi_0},\dots, V_{\xi_{m(n)}}\}\subset \mathcal V_{\omega_1}$ such that $\bigcap_{i\leq m(n)}V_{\xi_i}\subset O_n$. Set $\delta_n=\max\{\xi_0,\ldots, \xi_{m(n)}\}$. Since the set $\Xi$ is unbounded in $\omega_1$ there exists $\mu\in\Xi$ such that $\mu> \sup\{\delta_n:n\in \omega\}$. Then at stage $\mu$ of our induction the set $U=U_{\mu}$ will be already ``relevant'' for the filter $\mathcal W_{\mu}$, that is case (i) holds. Taking into account that $Y=\bigcup_{n\in\omega}M^{(n)}$, the inclusion $\langle V_{\mu}\rangle\cap Y\subset \langle V_{\mu}'\rangle\cap Y\subset U$ implies that $\mathfrak h(\F)> \omega$. 
\end{proof}

\begin{theorem}\label{measure}
If there exists an ultrafilter $\F$ such that $\mathfrak h(\F)> \omega+1$, then there exists a measurable cardinal.
\end{theorem}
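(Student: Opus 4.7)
The plan is to establish the theorem by showing that the trace $\F_\omega$ of $\mathcal N(\F)$ on the set $X^{(\omega)}$ is a non-principal $\omega_1$-complete ultrafilter, where $X\subseteq\beta(\kappa)$ is any scattered subspace witnessing $\mathfrak h(\F)>\omega+1$, so that $X^{(\omega+1)}=\{\F\}$ and each $\F_\xi$, $\xi\leq\omega$, is an ultrafilter on $X^{(\xi)}$. Theorem~\ref{classic} will then immediately yield a measurable cardinal.

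After fixing such an $X$, non-principality of $\F_\omega$ is immediate: for every $y\in X^{(\omega)}$ one has $y\neq\F$, so Hausdorffness of $\beta(\kappa)$ supplies $B\in\F$ with $y\notin\langle B\rangle$, whence $\{y\}\notin\F_\omega$. Note also that $\F\in\overline{X^{(\omega)}}$, since $\F\in X^{(\omega+1)}=(X^\omega)'$ and $X^\omega=X^{(\omega)}\cup\{\F\}$, so every neighborhood of $\F$ in $\beta(\kappa)$ meets $X^{(\omega)}$.

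The main step is to verify $\omega_1$-completeness of $\F_\omega$. Given a countable decreasing family $\{H_n\}_{n\in\omega}\subseteq\F_\omega$ with decreasing witnesses $A_n\in\F$ satisfying $\langle A_n\rangle\cap X^{(\omega)}\subseteq H_n$, the aim is to produce $A\in\F$ with $\langle A\rangle\cap X^{(\omega)}\subseteq\bigcap_n H_n$. The strategy is to apply condition~(2) of Definition~\ref{def} at $\gamma=\omega+1$ to a carefully designed selector $\langle F_\xi\in\F_\xi:\xi\leq\omega\rangle$. Using that each $y\in X^{(\omega)}$ is isolated in the closed subspace $X^\omega$, one selects basic open sets $V_y=\langle C_y\rangle\cap X$ with $V_y\cap X^\omega=\{y\}$, refining $C_y$ compatibly with the chain $(A_n)$; one then aggregates the lower-level traces $V_y\cap X^{(\xi)}$ into members $F_\xi\in\F_\xi$ for $\xi<\omega$, and picks $F_\omega\in\F_\omega$ so that the union $\bigcup_{\xi\leq\omega}F_\xi$ is open in $X$ and the resulting $A\in\F$ produced by condition~(2) satisfies $\langle A\rangle\cap X^{(\omega)}\subseteq\bigcap_n H_n$. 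Theorem~\ref{classic} applied to the non-principal $\omega_1$-complete ultrafilter $\F_\omega$ on $X^{(\omega)}$ then produces a measurable cardinal.

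The hardest step will be the selector construction: simultaneously ensuring openness of $\bigcup_{\xi\leq\omega}F_\xi$ in $X$ (so that condition~(2) applies), that each $F_\xi$ lies in $\F_\xi$, and that the $\omega$-level conclusion falls within $\bigcap_n H_n$ requires a delicate level-by-level coordination of the neighborhoods $V_y$. It is here that the full strength of $\mathfrak h(\F)>\omega+1$ is used, as the weaker condition $\mathfrak h(\F)>\omega$ would not supply enough freedom to handle the limit ordinal $\omega+1$ in the selector.
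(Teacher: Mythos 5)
Your overall architecture is the right one, and it matches what the paper's proof actually accomplishes (despite being phrased there as a contradiction): one shows that $\F_\omega$, the trace of $\mathcal N(\F)$ on $X^{(\omega)}$, is a nonprincipal $\omega_1$-complete ultrafilter and invokes Theorem~\ref{classic}. Your non-principality argument is correct. The gap is in the $\omega_1$-completeness step, and it is not merely a matter of unfinished details: the strategy you describe is circular. Applying condition~(2) at $\gamma=\omega+1$ to a selector $\langle F_\xi\in\F_\xi:\xi\leq\omega\rangle$ can only yield $A\in\F$ with $\langle A\rangle\cap X^{(\omega)}\subseteq F_\omega$ (the Cantor--Bendixson levels are disjoint), so to land inside $\bigcap_n H_n$ you must first exhibit $F_\omega\in\F_\omega$ with $F_\omega\subseteq\bigcap_n H_n$ --- i.e., you must already know that $\bigcap_n H_n\in\F_\omega$, which is precisely what is to be proved. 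No coordination of the neighborhoods $V_y$ at the finite levels removes this obstruction, because condition~(2) gives no control over which points of the chosen $F_\omega$ survive into $\langle A\rangle$.

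The idea that is actually needed, and that your sketch does not contain, is topological. Put $S_n=\bigcap_{i\leq n}A_i\in\F$ and $F_n=\langle S_n\rangle\cap X^{(n)}\in\F_n$ for $n<\omega$; since the $S_n$ decrease, $\bigcup_{n<\omega}F_n$ is open in $X$, and condition~(2) applied only at $\gamma=\omega$ gives $F\in\F$ with $\langle F\rangle\cap X^{(n)}\subseteq F_n$ for all $n$. Now suppose some $y\in\langle F\rangle\cap X^{(\omega)}$ had $S_n\notin y$ for some $n$. Then $V=F\setminus S_n\in y$, and $\langle V\rangle\cap X^{(n)}\subseteq\bigl(\langle F\rangle\cap X^{(n)}\bigr)\cap\langle\kappa\setminus S_n\rangle\subseteq\langle S_n\rangle\cap\langle\kappa\setminus S_n\rangle=\emptyset$; but $y$ has Cantor--Bendixson rank $\omega$ in the scattered space $X$, so every neighborhood of $y$ in $X$ meets $X^{(n)}$, a contradiction. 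Hence $\langle F\rangle\cap X^{(\omega)}\subseteq\bigcap_n\bigl(\langle S_n\rangle\cap X^{(\omega)}\bigr)\subseteq\bigcap_n\bigl(\langle A_n\rangle\cap X^{(\omega)}\bigr)\subseteq\bigcap_n H_n$, which is the desired witness. Note also that your closing remark misplaces where the hypothesis is used: condition~(2) is needed only for $\gamma=\omega$, and what $\mathfrak h(\F)>\omega+1$ buys over $\mathfrak h(\F)>\omega$ is condition~(1) at $\xi=\omega$, namely the very existence of the level $X^{(\omega)}$ on which $\mathcal N(\F)$ traces the ultrafilter $\F_\omega$.
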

    
    \begin{proof}
    Assume that there exists a cardinal $\kappa$ and an ultrafilter $\F\in\beta(\kappa)$ such that $\mathfrak{h}(\F)>\omega+1$. Then there exists a scattered subspace $X\subset \beta(\kappa)$ satisfying the following two conditions:
    \begin{itemize}
        \item[(1)] 
        $X^{(\w+1)}=\{\F\}$ and for any $\xi\in \w+1$ the
         filter $\mathcal N(\F)$ traces on $X^{(\xi)}$ an ultrafilter $\F_{\xi}$;
        \item[(2)] for any $0<\gamma\leq \w+1$ and for each selector $\langle F_{\xi}\in \F_{\xi}: \xi\in \gamma\rangle$ such that the set $\bigcup_{\xi\in \gamma}F_{\xi}$ is open in $X$, there exists $F\in \F$ satisfying $\langle F\rangle \cap \bigcup_{\xi\in\gamma}X^{(\xi)}\subseteq \bigcup_{\xi\in \gamma}F_{\xi}$.
    \end{itemize}
    To derive a contradiction, assume that there exists no measurable cardinal. Then the ultrafilters $\F$ and $\F_{\omega}$ are not $\omega_1$-complete, which implies the existence of families $\{H_{n}:n\in\omega\}\subset \F$ and  $\{G_n:n\in\omega\}\subset \F_{\omega}$ such that $\bigcap_{n\in\omega}H_n=\emptyset=\bigcap_{n\in\omega}G_n$.
    By the definition of $\F_{\omega}$, for every $n\in\omega$ there exists $T_n\in\F$ such that $\langle T_n\rangle \cap X^{(\omega)}\subset G_n$. For every $n\in\omega$ put $S_n=\bigcap_{i\leq n}H_n\cap \bigcap_{i\leq n}T_n$.
    Then $\{S_n: n\in\omega\}$ is a decreasing sequence of elements of $\F$ such that $\bigcap_{n\in\omega}S_n\subset \bigcap_{n\in\w}H_n=\emptyset$ and $\bigcap_{n\in\omega}(\langle S_n\rangle \cap X^{(\omega)})\subset \bigcap_{n\in\omega}G_n=\emptyset$. For any $n\in\omega$ put $F_n=\langle S_n\rangle\cap X^{(n)}\in\F_{n}$. Since $S_n\subset S_m$ whenever $m\leq n$, the set $\bigcup_{i\in\omega}F_i$ is open in $X$. Condition (2) implies the existence of a set $F\in \F$ such that $\langle F\rangle\cap \bigcup_{i\in\omega}X^{(i)}\subset \bigcup_{i\in\omega}F_i$. Put $G=\langle F\rangle\cap X^{(\omega)}\in\F_{\omega}$. 
    Since the set $\bigcup_{i\in\omega}F_i$ is open and $\langle F\rangle\cap \bigcup_{i\in\omega}X^{(i)}\subset \bigcup_{i\in\omega}F_i$, we get that the set $(\bigcup_{i\in\omega}F_i)\cup G$ is open as well. Taking into account that $\bigcap_{n\in\omega}(\langle S_n\rangle \cap X^{(\omega)})=\emptyset$ there exists $n\in\omega$ such that $W=G\setminus (\langle S_n\rangle \cap X^{(\omega)})\neq \emptyset$. 
    Consider any ultrafilter $\mathcal U\in W$. It follows that $S_n\notin \mathcal U$, witnessing that $\kappa\setminus S_n\in \mathcal U$. Since the set $(\bigcup_{i\in\omega}F_i)\cup G$ is open in $X$ there exists $U\in\mathcal U$ such that $\langle U\rangle \cap X\subset (\bigcup_{i\in\omega}F_i)\cup G$. Then $V=U\cap (\kappa\setminus S_n)\in\mathcal U$ and $\langle V\rangle\cap X \subset (\bigcup_{i\in\omega}F_i)\cup G$. Taking into account that $\mathcal U\in X^{(\omega)}$, $\emptyset\neq \langle V\rangle \cap X^{(n)}\subset F_n$. But since $V\cap S_n=\emptyset$, the definition of $F_n$ implies that $\langle V\rangle \cap F_n=\emptyset$. The obtained contradiction completes the proof.  
    %In order to show that the ultrafilter $\F_{\omega}$ is $\omega_1$-complete consider any countably subset $\{G_i\}_{i\in\omega}\subset F_{\omega}$. We can assume that $G_i\subset G_j$ whenever $j\leq i$. By the definition of $\F_{\omega}$, there exists a family $\{H_i\}_{i\in\omega}\subset \F$ such that $G_i=\langle H_i\rangle\cap X^{(\omega)}$ and $H_i\subset H_j$ whenever $j\leq i$. For any $n\in\omega$ put $F_n=\langle H_n\rangle\cap X^{(n)}\in\F_{n}$. Since $H_i\subset H_j$ whenever $j\leq i$, the set $\bigcup_{i\in\omega}F_i$ is open in $X$. Condition (2) implies the existence of a set $F\in \F$ such that $\langle F\rangle\cap (\bigcup_{i\in\omega}X^{(i)})\subset \bigcup_{i\in\omega}F_i$.
    %$$\langle F\rangle\cap (\bigcup_{i\in\omega}X^{(i)})=\bigcup_{i\in\omega}(\langle F\rangle \cap X^{(i)})\subset \bigcup_{i\in\omega}F_i=\bigcup_{i\in\omega}(\langle H_i\rangle\cap X^{(i)}).$$
    %We claim that $T=\langle F\rangle \cap X^{(\omega)}\subset \bigcap_{i\in\omega}G_i$. Indeed, fix any $x\in T$. 
    %Since the set $\Phi=\langle F\rangle \cap (X\setminus\{\F\})$ is open and $\Phi\subset \bigcup_{i\in\omega}F_i\cup T$ we get that the set $\bigcup_{i\in\omega}F_i\cup T$ is open as well. 
    \end{proof}
    
 Theorems~\ref{thnew},~\ref{CH} and~\ref{measure} yield the following natural problem.
    
    \begin{problem}
    Does there exist an ultrafilter $\F$ on $\w$ with $\mathfrak{h}(\F)>\omega$ in ZFC? What happens under MA?
    \end{problem}
    
    \begin{proposition}\label{pm}
    Let $\F$ be an ultrafilter on a measurable cardinal $\mu$ such that $\mathfrak{h}(\F)> \alpha\in\mu$. Then for every $n\in\omega$ there exists an ultrafilter $\F_n$ on $\mu$ such that $\mathfrak{h}(\F_n)> \alpha+n$.  
    \end{proposition}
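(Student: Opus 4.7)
The plan is to proceed by induction on $n$, the base case $n=0$ being immediate with $\F_0=\F$.

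For the inductive step, suppose $\F':=\F_{n-1}$ is an ultrafilter on $\mu$ with $\mathfrak{h}(\F')>\delta$, where $\delta:=\alpha+n-1\in\mu$, witnessed by a scattered $X\subseteq\beta(\mu)$ with $X^{(\delta)}=\{\F'\}$. Fix a $\mu$-complete nonprincipal ultrafilter $\mathcal{U}$ on $\mu$ (by measurability) and a partition $\{P_\eta:\eta\in\mu\}$ of $\mu$ into sets of cardinality $\mu$. Each bijection $\mu\to P_\eta$ extends to a homeomorphism $\beta(\mu)\to\langle P_\eta\rangle$; let $X_\eta\subseteq\langle P_\eta\rangle$ and $\F_\eta\in X_\eta^{(\delta)}$ be the images of $X$ and $\F'$, so that $X_\eta$ witnesses $\mathfrak{h}(\F_\eta)>\delta$. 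Define
\[F\in\F_n\ \Longleftrightarrow\ \{\eta\in\mu:F\in\F_\eta\}\in\mathcal{U},\]
which is an ultrafilter on $\mu$ lying in $\overline{\{\F_\eta:\eta\in\mu\}}\setminus\{\F_\eta:\eta\in\mu\}$, and outside every $\langle P_\eta\rangle$ by nonprincipality of $\mathcal U$. The proposed witness for $\mathfrak{h}(\F_n)>\delta+1$ is $Y=\bigcup_{\eta\in\mu}X_\eta\cup\{\F_n\}\subseteq\beta(\mu)$.

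Since each $X_\eta$ lies in the clopen $\langle P_\eta\rangle$ missing $\F_n$, Cantor--Bendixson derivatives act locally, yielding $Y^{(\xi)}=\bigcup_\eta X_\eta^{(\xi)}$ for $\xi<\delta$, $Y^{(\delta)}=\{\F_\eta:\eta\in\mu\}$, and $Y^{(\delta+1)}=\{\F_n\}$. For Definition~\ref{def}(1), I verify via a standard averaging argument that the trace of $\mathcal{N}(\F_n)$ on each $Y^{(\xi)}$ with $\xi\leq\delta$ is an ultrafilter: given $A\subseteq Y^{(\xi)}$, the set $S=\{\eta:A\cap X_\eta^{(\xi)}\in\F_\eta^{\xi}\}$ for $\xi<\delta$ (using the trace ultrafilters $\F_\eta^{\xi}$ on $X_\eta^{(\xi)}$ provided by $\mathfrak{h}(\F_\eta)>\delta$), or $S=\{\eta:\F_\eta\in A\}$ for $\xi=\delta$, falls on one side of $\mathcal{U}$-dichotomy, whereupon the witnessing sets $G_\eta\subseteq P_\eta$ (or $G_\eta=P_\eta$ when $\xi=\delta$) combine into $F=\bigcup_{\eta\in S}G_\eta\in\F_n$ with $\langle F\rangle\cap Y^{(\xi)}\subseteq A$. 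For Definition~\ref{def}(2), given a selector $\langle F_\xi:\xi<\gamma\rangle$ with $\gamma\leq\delta+1$ and $\bigcup_\xi F_\xi$ open in $Y$, the analogously defined sets $T_\xi\in\mathcal{U}$ have intersection $T\in\mathcal{U}$ by $\mu$-completeness (since $|\gamma|<\mu$); for each $\eta\in T$, applying Definition~\ref{def}(2) inside $X_\eta$ produces $G_\eta\subseteq P_\eta$ in $\F_\eta$ with $\langle G_\eta\rangle\cap X_\eta\subseteq\bigcup_\xi F_\xi$, and $F=\bigcup_{\eta\in T}G_\eta\in\F_n$ is the required witness.

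I expect the main obstacle to be the selector condition (2) in the boundary case $\gamma=\delta+1$: the top-level selector $F_\delta\subseteq\{\F_\eta:\eta\in\mu\}$ is not directly covered by the inductive condition (2) for $X_\eta$, which only allows $\gamma'\leq\delta$. The remedy is to restrict to $\eta\in T_\delta=\{\eta:\F_\eta\in F_\delta\}\in\mathcal{U}$; then $(\bigcup_\xi F_\xi)\cap X_\eta$ agrees with the $\gamma'=\delta$ local union $\bigcup_{\xi<\delta}(F_\xi\cap X_\eta^{(\xi)})$ off the single point $\F_\eta$ (closed in the Hausdorff space $X_\eta$), so applying condition (2) for $\F_\eta$ with $\gamma'=\delta$ provides $G_\eta$ whose image $\langle G_\eta\rangle\cap X_\eta$ lies in $\bigcup_{\xi<\delta}F_\xi\cup\{\F_\eta\}\subseteq\bigcup_\xi F_\xi$, using $\F_\eta\in F_\delta$. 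Crucially, $\mu$-completeness of $\mathcal{U}$, available because $\delta+1=\alpha+n<\mu$, allows intersecting the $\delta+1$-many $\mathcal{U}$-large sets $T_\xi$ into $T$, closing the argument.
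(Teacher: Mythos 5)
Your proposal is correct and takes essentially the same route as the paper's proof: build a topological sum of $\mu$ copies of the witnessing space inside $\beta(\mu)$, adjoin a point whose neighborhood trace on the strongly discrete top level is (a copy of) a $\mu$-complete ultrafilter, and verify both conditions of Definition~\ref{def} by using $\mu$-completeness to intersect the fewer-than-$\mu$ many relevant large sets before invoking the inductive condition (2) inside each copy. Your explicit handling of the boundary case $\gamma=\delta+1$ (deleting the closed singleton $\F_\eta$ so that condition (2) applies with $\gamma'=\delta$, then re-adding $\F_\eta\in F_\delta$) is a detail the paper treats more briskly, but the argument is the same.
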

    
    \begin{proof}
   The proof goes by induction on $n$.
    Assume that for some $n\in \omega$ there exists an ultrafilter $\F$ on a measurable cardinal $\mu$ such that $\mathfrak{h}(\F)> \alpha+n$.  We are going to find an ultrafilter $\mathcal W\in\beta(\mu)$ such that $\mathfrak{h}(\mathcal W)> \alpha+n+1$.
    Let $X$ be a scattered subspace of $\beta(\mu)$ which is a witness of $\mathfrak{h}(\F)> \alpha+n$. That is, 
    \begin{itemize}
        \item[(1)]  %$\F\in X^{(\alpha)}$ and for any $\xi\in \alpha$ the set $X^{(\xi)}$ is strongly discrete in $\beta(\kappa)$ which implies that the
        $\{\F\}= X^{(\alpha+n)}$ and for any $\xi\in \alpha+n$ the filter $\mathcal N(\F)$ traces on $X^{(\xi)}$ an ultrafilter $\F_{\xi}$;
        \item[(2)] for any $0<\gamma\leq \alpha+n$, for each selector $\langle F_{\xi}\in \F_{\xi}: \xi\in \gamma\rangle$ such that the set $\bigcup_{\xi\in \gamma}F_{\xi}$ is open in $X$ there exists $F\in \F$ satisfying $\langle F\rangle \cap \bigcup_{\xi\in\gamma}X^{(\xi)}\subseteq \bigcup_{\xi\in \gamma}F_{\xi}$.
    \end{itemize}
    Let $Y=X{\times}\mu$, where $\mu$ is endowed with the discrete topology. For every $\xi\in\mu$ by $X_{\xi}$ and $\F_{\xi}$ we denote the set $X{\times}\{\xi\}$ and the point $(\F,\xi)$, respectively. Being a topological sum of $\mu$-many disjoint copies of $X$, the space $Y$ can be identified with a scattered subspace of $\beta(\mu)$ such that the set $Y^{(\alpha+n)}=\{\F_{\xi}: \xi\in\mu\}$ is strongly discrete in $\beta(\mu)$.  By Lemma~\ref{d}, the set $\cl_{\beta(\mu)}(\{\F_{\xi}: \xi\in\mu\})$ is homeomorphic to $\beta(\mu)$. Then there exists an ultrafilter $\mathcal W$ on $\mu$ such that $\mathcal N(\mathcal W)$ traces on the set $\{\F_{\xi}: \xi\in\mu\}$ a $\mu$-complete ultrafilter. We claim that the scattered subspace $Z=Y\cup\{\mathcal W\}$ is a witness for $\mathfrak h(\mathcal W)> \alpha+n+1$. It is straightforward to check that $\mathcal W\in Z^{(\alpha+n+1)}$ and for every $\xi\in \alpha+n+1$ the trace of $\mathcal N(\mathcal W)$ on $Z^{(\xi)}$ (which we denote by $\mathcal W_{\xi}$) is an ultrafilter. Fix any $0<\gamma\leq \alpha+n+1$ and selector $\langle W_{\xi}\in \W_{\xi}: \xi\in \gamma\rangle$ such that the set $U=\bigcup_{\xi\in \gamma}W_{\xi}$ is open in $Z$. 
    %First we assume that $\gamma$  
    Then for each $\xi\in\gamma$ there exists a $A_{\xi}\in \W$ such that $\langle A_{\xi}\rangle \cap Z^{(\xi)}\subset W_{\xi}$. Since $|\gamma|\leq|\alpha|<\mu$ and the ultrafilter $\mathcal W_{\alpha+n}$ is $\mu$-complete, the set $V=\bigcap_{\xi\in\gamma} (\langle A_{\xi}\rangle\cap Z^{(\alpha+n)})\in \W_{\alpha+n}$. Let $D=\{\delta\in\mu: \F_{\delta}\in V\}$. For each $\xi\in\gamma$ and $\delta\in D$ the choice of $A_{\xi}$ implies that the set $W_{\xi}$ belongs to the trace of the filter $\mathcal N(\F_{\delta})$ on $Z^{(\xi)}$. For every $\delta\in\mu$ put $U_{\delta}=U\cap X_{\delta}$. Since the sets $U$ and $X_{\delta}$ are open in $Z$, for every $\delta\in \mu$ the set $U_{\delta}$ is open too. The definition of the space $Y$ implies that for each $\delta\in D$ and $\xi\in\gamma$ the set $U_{\delta}\cap Z^{(\xi)}$ belongs to the trace of the filter $\mathcal N(\F_{\delta})$ on $X^{(\xi)}_{\delta}$. Since for each $\delta\in D$ the space $X_{\delta}$ is a witness for $\mathfrak h(\F_{\delta})> \alpha+n$, there exists $F_{\delta}\in\F_{\delta}$ such that $\langle F_{\delta}\rangle\cap \bigcup_{\xi\in\gamma}X^{(\xi)}_{\delta}\subset U_{\delta}$. Then it is easy to check that $W=\bigcup_{\delta\in D}F_{\delta}\in \W$ and 
    $$\langle W\rangle \cap \bigcup_{\xi\in\gamma}Z^{(\xi)}=\bigcup_{\delta\in D}(\langle F_{\delta}\rangle \cap \bigcup_{\xi\in\gamma}Z^{(\xi)})\subset \bigcup_{\delta\in D}U_{\delta} \subset U.$$
    \end{proof}

    Proposition~\ref{prnew} and Theorem~\ref{CH} imply the following:
    
    \begin{corollary}\label{CCH}
    (CH) There exists a scattered space $X$ such that $\mathbf{OF}(X)$ is isomorphic to $(\omega+1,\geq)$.
    \end{corollary}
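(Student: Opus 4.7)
The plan is to combine the two results cited just before the statement: Theorem~\ref{CH} produces the ultrafilter, and Proposition~\ref{prnew} converts it into the desired scattered space. The only minor point to check is an arithmetic one about the function $S^*$.

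First, assuming CH, I would invoke Theorem~\ref{CH} to fix an ultrafilter $\F$ on $\omega$ with $\mathfrak{h}(\F) > \omega$. With this $\F$ in hand, I would apply Proposition~\ref{prnew} with the parameter $\alpha=\omega$ (and $\kappa=\omega$), which directly yields a scattered space $X$ with the property that $\mathbf{OF}(X)$ is order isomorphic to the ordinal $S^*(\omega)$ endowed with the reversed order.

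It remains only to observe that $S^*(\omega)=\omega+1$. This is immediate from the definition of $S^*$ given before Proposition~\ref{prneww}: since $\omega$ is infinite, $S^*(\omega)=\omega+1$; alternatively, one computes the order type of $\{\delta : 0<\delta<\omega+1\}=\{1,2,3,\ldots,\omega\}$ directly and sees it is $\omega+1$. Substituting into the conclusion of Proposition~\ref{prnew}, the poset $\mathbf{OF}(X)$ is isomorphic to $(\omega+1,\geq)$, which is exactly what the corollary asserts.

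There is essentially no obstacle here: the hard work has been absorbed into Theorem~\ref{CH} (the CH-based recursive construction of an ultrafilter of height greater than $\omega$) and into Proposition~\ref{prnew} (the passage from ultrafilter height to the lattice of free open filters via Construction~\ref{cons} and Proposition~\ref{supernew}). The corollary is a one-line application of these two results once one notes the arithmetic identity $S^*(\omega)=\omega+1$.
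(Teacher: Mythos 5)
Your proof is correct and is exactly the paper's argument: the corollary is stated there as an immediate consequence of Theorem~\ref{CH} (producing, under CH, an ultrafilter $\F$ on $\omega$ with $\mathfrak h(\F)>\omega$) and Proposition~\ref{prnew} applied with $\alpha=\omega$, together with the observation $S^*(\omega)=\omega+1$. Nothing is missing.
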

    
    %However, we don't know the answer to the following problem:
    
    Theorem~\ref{CH}, Proposition~\ref{prnew} and Proposition~\ref{pm} imply the following.
    
    %\begin{corollary}
    %(CH) For any measurable cardinal $\kappa$ and $n\in\omega$ there exists an ultrafilter $\F$ on $\kappa$ such that $\mathfrak h(\F)> \omega+n$.
    %\end{corollary}
    
    \begin{corollary}
    Assuming CH and the existence of a measurable cardinal, for every $n\in\omega$ there exists a scattered space $X$ such that $\mathbf{OF}(X)$ is isomorphic to $(\omega+n+1,\geq)$.
    \end{corollary}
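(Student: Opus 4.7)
The plan is to chain together the three results cited just above the statement, so the only genuine task is to bridge between the ultrafilter on $\w$ produced by Theorem~\ref{CH} and the hypothesis of Proposition~\ref{pm}, which asks for an ultrafilter living on a measurable cardinal. Fix a measurable cardinal $\mu$. Theorem~\ref{CH} delivers, under CH, an ultrafilter $\F$ on $\w$ together with a scattered witness $X\subseteq\beta(\w)$ of $\mathfrak h(\F)>\w$. Regard $\w$ as a subset of $\mu$; the map $\mathcal U\mapsto\mathcal U^{*}=\{A\subseteq\mu:A\cap\w\in\mathcal U\}$ is the canonical embedding $\beta(\w)\hookrightarrow\beta(\mu)$, and under it the set $X$ sits as a scattered subspace of $\beta(\mu)$ with the same Cantor--Bendixson structure.

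First I would verify that this lift preserves the height: the image $\F^{*}$ of $\F$ sits at $X^{(\w)}$, each Cantor--Bendixson level $X^{(\xi)}$ (for $\xi<\w$) carries the trace of $\mathcal N(\F^{*})$ as an ultrafilter because the neighborhoods $\langle F\rangle$ in $\beta(\mu)$ restrict to $\langle F\rangle$ in $\beta(\w)$ on $X$, and the selector condition~(2) of Definition~\ref{def} transfers verbatim (the same $F\subseteq\w\subseteq\mu$ works, since $F\in\F^{*}$ iff $F\in\F$). Hence $\mathfrak h(\F^{*})>\w$ in $\beta(\mu)$, and $\w\in\mu$ since $\mu$ is uncountable.

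Now the hypothesis of Proposition~\ref{pm} is met with $\alpha=\w$, so for the given $n\in\w$ it produces an ultrafilter $\F_n$ on $\mu$ with $\mathfrak h(\F_n)>\w+n$. Feeding $\F_n$ into Proposition~\ref{prnew} with $\alpha=\w+n$ yields a scattered space $X_n$ such that $\mathbf{OF}(X_n)$ is order isomorphic to $S^{*}(\w+n)$ under the reversed order. Since $\w+n$ is infinite, $S^{*}(\w+n)=\w+n+1$, which is precisely the desired $(\w+n+1,\geq)$.

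The only point that requires any thought is the stability of the scattered witness under the embedding $\beta(\w)\hookrightarrow\beta(\mu)$; I expect this to be the main (and, in fact, only mild) obstacle, because the height function $\mathfrak h$ is defined in terms of traces of neighborhood filters in $\beta(\kappa)$, and one has to confirm that nothing in $\beta(\mu)\setminus\beta(\w)$ intrudes on $X$. This follows because $X\subseteq\beta(\w)$ is topologically the same subspace whether viewed inside $\beta(\w)$ or inside $\beta(\mu)$, so both conditions~(1) and~(2) of Definition~\ref{def} for $\F^{*}$ reduce to the corresponding conditions for $\F$ already granted by Theorem~\ref{CH}.
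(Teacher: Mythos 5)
Your proof is correct and follows essentially the same route as the paper, which states the corollary as an immediate consequence of Theorem~\ref{CH}, Proposition~\ref{pm} and Proposition~\ref{prnew} without further argument. The only extra content you supply is the explicit lift of the ultrafilter (and its scattered witness) from $\beta(\omega)$ to $\beta(\mu)$ via the clopen copy $\langle\omega\rangle\cong\beta(\omega)$ inside $\beta(\mu)$, a bridging step the paper leaves implicit, and your verification that conditions (1) and (2) of Definition~\ref{def} transfer under this identification is accurate.
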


    The following lemma justifies why we considered ordinals with the reversed order.
    
    \begin{lemma}
    Let $X$ be a scattered space. If $\mathbf{OF}(X)$ is order isomorphic to an ordinal $\alpha$, then $\alpha$ is finite.
    \end{lemma}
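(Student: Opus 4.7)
The plan is to first reduce $\alpha$ to a successor ordinal by Zorn's lemma, and then rule out the infinite successor case using Lemma~\ref{lm} together with scatteredness.

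By Corollary~\ref{uni}, every nonempty chain in $\mathbf{OF}(X)$ has its union as an upper bound inside $\mathbf{OF}(X)$; Zorn's lemma therefore yields a maximal element of $\mathbf{OF}(X)$, which in the linearly ordered poset $\mathbf{OF}(X)\cong\alpha$ must be the unique maximum. Hence $\alpha$ has a top, so $\alpha=0$ (trivially finite) or $\alpha=\beta+1$ is a successor ordinal.

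Assume for contradiction that $\alpha=\beta+1$ with $\beta\geq\omega$, and denote by $\F_\delta$ the filter at position $\delta\in\alpha$. Corollary~\ref{uni} gives $\F_\omega=\bigcup_{n<\omega}\F_n$, and $X$ is almost H-closed by Theorem~\ref{char}. For each $n\geq 1$, apply Lemma~\ref{lm}(1) to the cover pair $\F_{n-1}\lessdot\F_n$ to choose an open set $V_n\in\F_n\setminus\F_{n-1}$ that generates $\F_n$ over $\F_{n-1}$; observe that $V_n\in\F_m$ if and only if $m\geq n$. Since $\mathbf{OF}(X)$ is linearly ordered, every free open filter on $X$ equals some $\F_\gamma$; combining this with the previous characterization of the $V_n$'s, one obtains that for every infinite $A\subseteq\omega$ the filter generated by $\F_0\cup\{V_m:m\in A\}$ equals $\F_\omega$. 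In particular $\F_\omega$ has a countable base modulo $\F_0$.

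The hard part, where scatteredness enters essentially, is to derive a contradiction from this structure. My plan is to use Lemma~\ref{lm}(5) (which requires almost H-closedness) to extract from each cover $\F_{n-1}\lessdot\F_n$ an open ultrafilter $\mathcal{U}_n$ on the scattered closed boundary $\overline{V_n}\setminus V_n$. Since scattered closed subspaces of $X$ have a well-founded Cantor--Bendixson hierarchy, each $\mathcal{U}_n$ localizes onto a specific Cantor--Bendixson level of its boundary. Combining this localization with the countable generation of $\F_\omega$ over $\F_0$, the plan is to construct a free open filter $\F^{*}\in\mathbf{OF}(X)$ that is incomparable with some $\F_m$---for instance, by using the localization data to produce two incompatible refinements of $\F_{n-1}$, or alternatively by forcing a strictly descending sequence of Cantor--Bendixson ranks that cannot be sustained---which contradicts the linearity of $\mathbf{OF}(X)$. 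The precise combinatorial execution of this step is the crux of the proof.
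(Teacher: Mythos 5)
Your first two paragraphs are correct as far as they go: $\alpha$ must be $0$ or a successor, and if $\omega<\alpha$ then the sets $V_n\in\F_n\setminus\F_{n-1}$ exist with $V_n\in\F_m$ iff $m\geq n$, so that $\F_\omega$ is generated by $\F_0\cup\{V_m:m\in A\}$ for every infinite $A\subseteq\omega$. But the proof stops exactly where it has to start. An increasing $\omega$-chain with a supremum is perfectly compatible with being an ordinal, so nothing contradictory has been extracted yet, and your final paragraph is an unexecuted plan: you name two candidate mechanisms (incomparable refinements built from the ultrafilters that Lemma~\ref{lm}(5) gives on $\overline{V_n}\setminus V_n$, or an ``unsustainable'' descent of Cantor--Bendixson ranks) but carry out neither, and you say yourself that ``the precise combinatorial execution of this step is the crux of the proof.'' That crux --- the only place scatteredness is actually used --- is missing, so this is a genuine gap rather than a proof.

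For comparison, the paper's argument does not use the $V_n$'s or the boundaries $\overline{V_n}\setminus V_n$ at all; it works with the Cantor--Bendixson levels of $X$ itself and with $\F_{\inf}$. Let $N$ be the least $\xi$ for which some element of $\F_{\inf}$ misses $X^{(\xi)}$. If $N$ were infinite, the filters generated by $\F_{\inf}\cup\{\bigcup_{i\leq n}X^{(i)}\}$ for $n\in\omega$ would form an infinite strictly \emph{decreasing} chain in $\mathbf{OF}(X)$, which cannot embed into an ordinal; this is the step for which your setup has no analogue, since your chain runs upward. Once $N$ is finite, linearity forces the trace of $\F_{\inf}$ on each level $X^{(i)}$, $i<N$, to be an ultrafilter (two distinct enlargements of a non-ultra trace yield incomparable free open filters), whence $\F_{\inf}$ is a finite intersection of ultrafilters and $\mathbf{OF}(X)$ is finite outright. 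If you want to salvage your approach, the most promising fix is to manufacture a decreasing chain as the paper does, rather than to analyze the ultrafilters on the boundaries $\overline{V_n}\setminus V_n$.
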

    
    \begin{proof}
    Let $X$ be a scattered space such that $\mathbf{OF}(X)$ is isomorphic to an ordinal $\alpha$. 
    By Theorem~\ref{char}, the space $X$ possesses a unique free open ultrafilter $\mathcal U$. Since $X^{(0)}$ is a dense open subspace of $X$, $X^{(0)}\in \mathcal U$. Since $X^{(0)}$ is discrete, the trace of the filter $\mathcal U$ on $X^{(0)}$ is an ultrafilter. Taking into account that $X^{(0)}\in\mathcal U$, it is easy to check that $\mathcal U$ is an ultrafilter on $X$. Let $\F_{\inf}$ be the infimum of $\mathbf{OF}(X)$.  
    Set 
    $$N=\min\{\xi\in ht(X):\hbox{ exists }F\in\F_{\inf} \hbox{ such that }F\cap X^{(\xi)}=\emptyset\}.$$ Since $X$ is scattered and the filter $\F_{\inf}$ is open, we get that $\bigcup_{i\in N}X^{(i)}\in\F_{\inf}$. It follows that the set $\bigcup_{i\in N}X^{(i)}$ belongs to each free open filter on $X$. If the ordinal $N$ is infinite, then for each $n\in \omega$ consider a free open filter $\F_n$ generated by the set $\F_{\inf}\cup\{\bigcup_{i\leq n}X^{(i)}\}$. Clearly, $\F_m\subsetneqq \F_{n}$ whenever $n<m$. Thus, the lattice $\mathbf{OF}(X)$ contains an infinite decreasing chain $\{\F_n:n\in\omega\}$, which contradicts our assumption. Hence $N\in\omega$.  
     To derive a contradiction, assume that there exists $i\in N$ such that $\F_{\inf}$ traces on $X^{(i)}$ the filter $\mathcal W_i$ which not an ultrafilter. Corollary~\ref{col} implies that the traces of $\mathcal U$ and $\F_{\inf}$ on $X^{(0)}$ coincides. Hence we get that $i>0$. The filter $\mathcal W_i$ can be enlarged to two different ultrafilters $\mathcal P$ and $\mathcal Q$. Then consider the filters $\F^{\mathcal P}$ and $\F^{\mathcal Q}$ generated by the families $\{A\cup B: A\in \F_{i-1}, B\in\mathcal P\}$ and $\{A\cup B: A\in \F_{i-1}, B\in\mathcal Q\}$, respectively. It is easy to see that $\F^{\mathcal P}$ and $\F^{\mathcal Q}$ are incomparable free open filters, which contradicts  the linearity of $\mathbf{OF}(X)$. Thus $\mathcal F_{\inf}$ traces an ultrafilter $\mathcal W_i$ on $X^{(i)}$ for every $i\in N$. 
    For every $i\in N$ let $\mathcal V_i$ be the ultrafilter on $X$ generated by the family $\mathcal W_i$.
    Since $\bigcup_{i\in N}X^{(i)}\in\F_{\inf}$, we get that $\F_{\inf}=\bigcap_{i\in N}\mathcal V_i$. It follows that there are only finitely many filters which contain $\F_{\inf}$. Hence the poset $\mathbf{OF}(X)$ is finite.
    \end{proof}

The following lemma implies that not every complete linear order can be represented as $\mathbf{OF}(X)$ for some space $X$. In particular, $\mathbf{OF}(X)$ cannot be order isomorphic to a dense linear order.
   
\begin{lemma}\label{gap}
If a poset $\mathbf{OF}(X)$ is linear, then for every distinct $a\leq b\in \mathbf{OF}(X)$ there exist distinct $c,d\in \mathbf{OF}(X)$ such that $a\leq c\leq d\leq b$ and there exists no $e\in \mathbf{OF}(X)\setminus\{c,d\}$ satisfying $c\leq e\leq d$.
\end{lemma}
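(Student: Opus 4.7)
The plan is to use Zorn's Lemma to locate a covering pair inside the linearly ordered interval $[a,b] \subseteq \mathbf{OF}(X)$. Since $a \subsetneq b$ and $b$ is an open filter, pick an open set $P \in b \setminus a$ and consider
$$\mathcal{A} = \{F \in \mathbf{OF}(X) : a \subseteq F \text{ and } P \notin F\}.$$
Observe that $a \in \mathcal{A}$. For any chain $\mathcal{T} \subseteq \mathcal{A}$, Corollary~\ref{uni} yields that $\bigcup \mathcal{T}$ is a free open filter; it clearly contains $a$ and omits $P$ (since no member of $\mathcal{T}$ contains $P$), hence lies in $\mathcal{A}$. Zorn's Lemma then produces a maximal element $c \in \mathcal{A}$.

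Next, let $d$ be the filter generated by $c \cup \{P\}$. The family $\{F \cap P : F \in c, \ F \text{ open}\}$ is a base of $d$ consisting of open sets, so $d$ is an open filter. Moreover, every such $F \cap P$ lies in $b$ (since $b$ contains both $F \in c \subseteq b$ and $P$, and is closed under finite intersections), so $d \subseteq b$ and in particular $d$ is a proper filter. Since $a \subseteq d$, it is also free. Thus $d \in \mathbf{OF}(X)$, and $a \subseteq c \subsetneq d \subseteq b$ with $P \in d \setminus c$ witnessing $c \neq d$.

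Finally, suppose toward contradiction that some $e \in \mathbf{OF}(X) \setminus \{c,d\}$ satisfies $c \subseteq e \subseteq d$. Since $e \neq c$, we have $c \subsetneq e$, and then the maximality of $c$ in $\mathcal{A}$ forces $P \in e$ (the alternative $a \not\subseteq e$ is impossible). But then $e \supseteq c \cup \{P\}$, so $e \supseteq d$; combined with $e \subseteq d$ this gives $e = d$, contradicting $e \neq d$. Hence the pair $c,d$ has the desired gap property. The only background point worth flagging is the appeal to Corollary~\ref{uni}, which requires $X$ to be almost H-closed; but a linearly ordered poset is automatically a lattice, so this is guaranteed by the implication (4)$\Rightarrow$(1) of Theorem~\ref{char}.
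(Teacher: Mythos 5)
Your proof is correct, and it reaches the gap by a somewhat different mechanism than the paper. The paper also starts from a witness $F\in b\setminus a$, but then exploits the completeness of the lattice $\mathbf{OF}(X)$ coming from Theorem~\ref{char} together with the explicit formulas for suprema and infima: it sets $c=\sup\{y\in\mathbf{OF}(X):F\notin y\}$ and $d=\inf\{y\in\mathbf{OF}(X):F\in y\}$, uses Corollary~\ref{uni} to see $F\notin c$ and Lemma~\ref{op1} to see $F\in d$, and then linearity forces $c\leq d$ and makes any $e$ between them equal to $c$ or $d$ according to whether $F\in e$. You instead obtain $c$ as a Zorn-maximal free open filter above $a$ omitting the (open) witness $P$, and take $d$ to be the filter generated by $c\cup\{P\}$; this trades the completeness of $\mathbf{OF}(X)$ and the infimum formula for an elementary Zorn argument plus an explicit generation of $d$, so it is somewhat more self-contained (for the chain step you could even bypass Theorem~\ref{char} altogether, since the union of an increasing chain of free open filters is itself a free open filter by a direct check, which is exactly how Corollary~\ref{uni} is obtained). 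Two small points should be made explicit: first, choosing $P$ open in $b\setminus a$ uses that $a$ is upward closed (take $G\in b\setminus a$ and an open $P\in b$ with $P\subseteq G$; if $P$ were in $a$ then so would be $G$); second, and more importantly, the inclusion $c\subseteq b$ that you use to conclude $d\subseteq b$ (and hence that $d$ is a proper filter) is not automatic from the construction of $c$ -- it is precisely here that the linearity hypothesis is needed again: $c$ and $b$ are comparable, and $b\subseteq c$ is impossible because $P\in b\setminus c$, so $c\subseteq b$. With that one line added, your argument is complete.
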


\begin{proof}
Assume that the poset $\mathbf{OF}(X)$ is linear.
Fix any distinct free open filters $a\subset b$ on $X$. There exists $F\in b\setminus a$. Set $c=\sup\{y\in \mathbf{OF}(X): F\notin y\}$ and $d=\inf \{y\in \mathbf{OF}(X): F\in y\}$. By Theorem~\ref{char}, the filters $c$ and $d$ are well-defined. Lemma~\ref{op1} implies that $F\in d=\bigcap\{y\in \mathbf{OF}(X): F\in y\}$. By Corollary~\ref{uni}, $c=\bigcup\{y\in \mathbf{OF}(X): F\notin y\}$. It follows that $F\notin c$. Since $F\in d\setminus c$ and the poset $\mathbf{OF}(X)$ is linear we get that $c\leq d$. Clearly, $a\leq c\leq d\leq b$. Pick any $e\in \mathbf{OF}(X)$ and assume that $c\leq e\leq d$. Then either $F\in e$ or $F\notin e$. In the first case $e=d$. Otherwise, $e=c$.    
\end{proof}

Nevertheless, the following problem remains open:

\begin{problem}
Does there exist a Hausdorff space $X$ such that $\mathbf{OF}(X)$ is order isomorphic to an infinite ordinal?
\end{problem}

Note that Remark 1.7 from~\cite{PSV} implies that each non-compact regular space possesses at least $\omega_1$ free open filters.
So, after constructing spaces possessing arbitrary finite linear lattices of free open filters, it is natural to ask whether, for a given cardinal $\kappa$, there exists a space which possesses exactly $\kappa$ many free open filters. The following proposition gives the affirmative answer to this question.

\begin{proposition}
For each cardinal $\kappa$ there exists a space $Y$ which possesses exactly $\kappa$-many free open filters. 
\end{proposition}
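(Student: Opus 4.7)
Plan: The proof splits into cases according to the size of $\kappa$.

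For $\kappa=0$, any compact Hausdorff space $Y$ is H-closed, so $|\mathbf{OF}(Y)|=0$. For $1\leq\kappa<\omega$, Theorem~\ref{t1} directly supplies a scattered space $Y$ with $\mathbf{OF}(Y)$ a $\kappa$-element chain, hence of cardinality $\kappa$. The main case is $\kappa\geq\omega$.

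For $\kappa\geq\omega$, fix an almost H-closed space $X$ (for instance, Mooney's example) with unique free open ultrafilter $\mathcal{U}$, and for each $\alpha<\kappa$ take a copy $X_\alpha$ of $X$ with unique free open ultrafilter $\mathcal{U}_\alpha$. Let
\[
Y_\kappa = \Big(\bigsqcup_{\alpha<\kappa}X_\alpha\Big)\cup\{\infty\},
\]
where $\infty$ is a new point whose neighborhood base consists of the sets $V_F=\{\infty\}\cup\bigcup_{\alpha\notin F}X_\alpha$ for $F\in[\kappa]^{<\omega}$. This ``one-point compactification in the $\kappa$-direction'' is easily seen to be Hausdorff. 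Setting $\mathcal{U}_\alpha^{Y_\kappa}=\{F\subseteq Y_\kappa: F\cap X_\alpha\in\mathcal{U}_\alpha\}$, I claim the free open filters on $Y_\kappa$ are precisely the filters $\mathcal{F}_A=\bigcap_{\alpha\in A}\mathcal{U}_\alpha^{Y_\kappa}$ indexed by nonempty finite $A\subseteq\kappa$.

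The plan is then to verify: (a) each such $\mathcal{F}_A$ is a free open filter, with freeness at $\infty$ witnessed by $Y_\kappa\setminus V_A=\bigcup_{\alpha\in A}X_\alpha\in\mathcal{F}_A$, and freeness on each $X_\alpha$ coming from $\mathcal{U}_\alpha$; (b) the filters $\mathcal{F}_A$ are pairwise distinct, since $\bigcup_{\alpha\in A}X_\alpha\in\mathcal{F}_{A'}$ iff $A'\subseteq A$; (c) for infinite $A\subseteq\kappa$, $\mathcal{F}_A$ fails to be free at $\infty$, since for any finite $F$ some $\beta\in A\setminus F$ satisfies $(Y_\kappa\setminus V_F)\cap X_\beta=\emptyset\notin\mathcal{U}_\beta$; and (d) any free open filter $\mathcal{F}$ has finite positive support $A$ (freeness at $\infty$ gives $\bigcup_{\alpha\in F_0}X_\alpha\in\mathcal{F}$ for some finite $F_0$; intersecting with $Y_\kappa\setminus X_\beta\in\mathcal{F}$ for $\beta\in F_0\setminus A$ yields the ``anchor'' $\bigcup_{\alpha\in A}X_\alpha\in\mathcal{F}$), traces to $\mathcal{U}_\alpha$ on each $X_\alpha$ ($\alpha\in A$) by almost H-closedness of $X_\alpha$, and then equals $\mathcal{F}_A$. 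Since $|[\kappa]^{<\omega}\setminus\{\emptyset\}|=\kappa$ for infinite $\kappa$, this gives $|\mathbf{OF}(Y_\kappa)|=\kappa$.

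The main expected obstacle is the uniqueness argument in~(d) --- showing that $\mathcal{F}$ must contain every $G\in\mathcal{F}_A$, not merely be contained in $\mathcal{F}_A$. The trick here is that for an open $G\in\mathcal{F}_A$ with open subset $\bigcup_{\alpha\in A}W_\alpha\subseteq G$ and $W_\alpha\in\mathcal{U}_\alpha$ open, one picks, from the open base of $\mathcal{F}$, open elements $H_\alpha\in\mathcal{F}$ with $H_\alpha\cap X_\alpha\subseteq W_\alpha$, and then intersects the finite meet $\bigcap_{\alpha\in A}H_\alpha\in\mathcal{F}$ with the anchor $\bigcup_{\alpha\in A}X_\alpha\in\mathcal{F}$ to obtain an $\mathcal{F}$-element contained in $\bigcup_\alpha W_\alpha\subseteq G$, forcing $G\in\mathcal{F}$.
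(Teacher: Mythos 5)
Your construction is exactly the paper's: a disjoint sum of $\kappa$ many building blocks together with one extra point whose neighborhoods are co-finite unions of the blocks, with the free open filters classified by their finite nonempty "support". However, there is one genuine flaw in step (d) as you state it: you take the building block $X$ to be merely \emph{almost H-closed} (unique free open \emph{ultrafilter} $\mathcal U$) and then claim the trace of an arbitrary free open filter $\mathcal F$ on $X_\alpha$ "is $\mathcal U_\alpha$ by almost H-closedness". That inference is invalid: the trace of $\mathcal F$ on a positive block is a free open \emph{filter} on $X_\alpha$, and almost H-closedness says nothing about free open filters that are not ultrafilters. Indeed, Proposition~\ref{L} of the paper produces an almost H-closed space with more than $2^\kappa$ free open filters, and each such filter $\G$ on a block $X_\alpha$ lifts to a free open filter on your $Y_\kappa$ (it contains $X_\alpha=Y_\kappa\setminus V_{\{\alpha\}}$, so it is free at $\infty$ and at all other blocks). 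With such a block your classification "$\mathbf{OF}(Y_\kappa)=\{\mathcal F_A: A\in[\kappa]^{<\omega}\setminus\{\emptyset\}\}$" is false and the count can vastly exceed $\kappa$.

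The fix is exactly what the paper does: the hypothesis you need on $X$ is that it possesses a \emph{unique free open filter} ($|\mathbf{OF}(X)|=1$), which is strictly stronger than almost H-closedness; such a space exists by Theorem~\ref{t1} with $n=1$ (and Mooney's example, which you name parenthetically, does have this property, so your parenthetical choice happens to save the argument even though the stated hypothesis does not). With that hypothesis, the trace of $\mathcal F$ on each positive block is forced to be $\mathcal U_\alpha$, and the rest of your argument --- the anchor $\bigcup_{\alpha\in A}X_\alpha\in\mathcal F$, the identification $\mathcal F=\mathcal F_A$ via finite meets intersected with the anchor, the pairwise distinctness, and the count $|[\kappa]^{<\omega}\setminus\{\emptyset\}|=\kappa$ --- is correct and coincides with the paper's proof.
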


\begin{proof}
If $\kappa<\omega$, then the statement follows from Theorem~\ref{t1}.
So, fix any infinite cardinal $\kappa$. For each $\alpha\in \kappa$ let $X_{\alpha}$ be a space admitting a unique free open filter $\mathcal{F}_{\alpha}$, which exists by Theorem~\ref{t1}. Let $Y$ be the disjoint union $\bigsqcup_{\alpha\in\kappa} X_{\alpha}\sqcup\{z\}$ endowed with the topology $\tau$ satisfying the following conditions:
\begin{itemize}
\item $X_{\alpha}$ is an open subspace of $(Y,\tau)$ for each $\alpha\in\kappa$;
\item open neighborhood base at $z$ consists of the sets $Y\setminus (\bigcup_{\alpha\in A}X_{\alpha})$, where $A\in [\kappa]^{<\omega}$.
%\item if $z\in U\in\tau$, then there exists a finite set $A\subset \kappa$ such that $Y\setminus U\subset \bigcup_{\alpha\in A}X_{\alpha}$.
\end{itemize}

For $A\in [\kappa]^{<\omega}$ let $\F_A$ be the filter on $Y$ generated by the family $\{\bigcup_{\alpha\in A}F_{\alpha}: F_{\alpha}\in\mathcal{F}_{\alpha}\}$. We claim that the set of all free open filters on the space $Y$ coincides with the set $\{\mathcal{F}_{A}:A\in[\kappa]^{<\omega}\}$ which has cardinality $\kappa$. By the definition of the topology on $Y$, for every $A\in[\kappa]^{<\omega}$ the filter $\F_A$ is open and free. 
%So, it remains to show the converse inclusion. 
Fix any free open filter $\mathcal{F}$ on the space $Y$. Since $\mathcal{F}$ is free there exists $F\in \mathcal{F}$ such that $z\notin \overline{F}$. It follows that there exists a finite subset $A\subset \kappa$ such that $F\subset \bigcup_{\alpha\in A}X_{\alpha}$. Let
$$B=\{\alpha\in\kappa:T\cap X_{\alpha}\neq\emptyset\hbox{  for each }T\in\mathcal{F}\}.$$
The arguments above imply that $B\subset A$. The set $B$ is nonempty, because otherwise there would exist elements $F_{\alpha}\in\mathcal{F}$, $\alpha\in A$ such that $F_{\alpha}\subset\bigcup_{\beta\in A}X_{\beta}$ and $F_{\alpha}\cap X_{\alpha}=\emptyset$ implying that $\emptyset=\bigcap_{\alpha\in A}F_{\alpha}\in \mathcal{F}$, which is impossible. Since for each $\alpha\in B$ the space $X_{\alpha}$ possesses the unique free open filter $\mathcal{F}_{\alpha}$ we obtain that for each $\alpha\in B$ the trace of the filter $\mathcal{F}$ on $X_{\alpha}$ coincides with $\mathcal{F}_{\alpha}$. Since the subspaces $X_{\alpha}$, $\alpha\in\kappa$ are clopen and pairwise disjoint it is straightforward to check that $\mathcal{F}=\mathcal{F}_{B}$.
\end{proof}

\section{Finite nonlinear lattices of free open filters}\label{5}

For each $n\in\omega$ let $\mathcal{F}_{n}$ be a filter on a set $X_{n}$. By $\prod_{i\in n}\mathcal{F}_i$
we denote the filter on the set $\prod_{i\in n}X_{i}$ generated by the family $\{\prod_{i\in n}F_{i}: F_{i}\in\mathcal{F}_{i}, i\in n\}$. If $n=2$, then the product of filters $\mathcal{F}_0$ and $\mathcal{F}_1$ is denoted  by $\mathcal{F}_0{\times}\mathcal{F}_1$.

%Recall that an ultrafilter $\mathcal{F}$ on a cardinal $\kappa$ is called {\em $\kappa$-complete} if for any $\lambda\in\kappa$ and subfamily $\{F_{\alpha}:\alpha\in\lambda\}\subset \mathcal{F}$ the set $\cap_{\alpha\in\lambda}F_{\alpha}$ belongs to $\mathcal{F}$.  %\js{I think that the~latter sentence should be restated.}

The following result was proved by Blass in his PhD-thesis.
\begin{lemma}\label{bl}
For ultrafilters $\mathcal{F}$ and $\mathcal{G}$ on sets $X$ and $Y$, respectively, the following conditions are equivalent:
\begin{enumerate}[\rm (i)]
\item $\mathcal{F}{\times}\mathcal{G}$ is an ultrafilter;
\item for every function $f:X\rightarrow \mathcal{G}$ there exists $F\in\mathcal{F}$ such that $\bigcap_{x\in F}f(x)\in\mathcal{G}$.
\end{enumerate}
\end{lemma}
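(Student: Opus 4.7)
The plan is to prove both implications by a standard slicing argument. Given $A\subseteq X{\times}Y$, for each $x\in X$ I write $A_x=\{y\in Y:(x,y)\in A\}$ for its vertical section; recall that a rectangle $F{\times}G$ with $F\in\mathcal F$, $G\in\mathcal G$ is the generating shape of $\mathcal F{\times}\mathcal G$.

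For (i)$\Rightarrow$(ii), given $f:X\to\mathcal G$, I consider $A=\{(x,y):y\in f(x)\}$. Since $\mathcal F{\times}\mathcal G$ is an ultrafilter, either $A$ or its complement contains a rectangle $F{\times}G$. The complement case is immediately excluded: if $F{\times}G\subseteq (X{\times}Y)\setminus A$, then for every $x\in F$ one has $G\cap f(x)=\emptyset$, contradicting that $G,f(x)\in\mathcal G$. Hence there exist $F\in\mathcal F$ and $G\in\mathcal G$ with $G\subseteq f(x)$ for every $x\in F$, so $\bigcap_{x\in F}f(x)\supseteq G\in\mathcal G$.

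For (ii)$\Rightarrow$(i), take an arbitrary $A\subseteq X{\times}Y$. Since $\mathcal G$ is an ultrafilter, each section satisfies either $A_x\in\mathcal G$ or $Y\setminus A_x\in\mathcal G$, so I can define $f:X\to\mathcal G$ by $f(x)=A_x$ when $A_x\in\mathcal G$ and $f(x)=Y\setminus A_x$ otherwise. Set $X_0=\{x:A_x\in\mathcal G\}$ and $X_1=X\setminus X_0$. Applying (ii) I obtain $F\in\mathcal F$ with $G:=\bigcap_{x\in F}f(x)\in\mathcal G$. Since $\mathcal F$ is an ultrafilter, exactly one of $F\cap X_0$, $F\cap X_1$ lies in $\mathcal F$. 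In the first case $(F\cap X_0){\times}G\subseteq A$, putting $A\in\mathcal F{\times}\mathcal G$; in the second case $(F\cap X_1){\times}G\subseteq (X{\times}Y)\setminus A$, putting the complement of $A$ in $\mathcal F{\times}\mathcal G$.

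There is no real obstacle: the only mildly subtle point is defining $f$ on $X_1$ in the second implication in a way that keeps the values in $\mathcal G$, and then harvesting the correct rectangle by intersecting $F$ with whichever of $X_0$, $X_1$ belongs to $\mathcal F$. Everything else is an unpacking of the definition of $\mathcal F{\times}\mathcal G$ together with the ultrafilter property applied one coordinate at a time.
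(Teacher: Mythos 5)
Your argument is correct: both implications are handled by the standard slicing argument, and the one subtle point (defining $f$ on the ``bad'' set $X_1$ so that its values stay in $\mathcal G$, then intersecting $F$ with whichever of $X_0$, $X_1$ lies in $\mathcal F$) is dealt with properly. Note that the paper itself gives no proof of this lemma --- it is quoted from Blass's thesis --- so there is nothing to compare against; your write-up is a correct, self-contained proof of the cited fact.
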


Let $\{\kappa_{i}:1\leq i\leq n\}$ be an increasing sequence of measurable cardinals. For any $i\geq 1$ fix any free $\kappa_{i}$-complete ultrafilter $\mathcal{F}_{i}$ on $\kappa_{i}$.
Let $\kappa_0=\omega$ and $U_0$ be any free ultrafilter on $\omega$. For each $0<m\leq n$ by $\mathcal{U}_{m}$ we denote the filter
$\mathcal U_{m-1}{\times}\mathcal{F}_{m}$.
%Lemma~\ref{bl} implies the following.

\begin{corollary}\label{u}
For any $m\leq n$, $\mathcal{U}_{m}$ is an ultrafilter.
\end{corollary}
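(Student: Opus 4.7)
The plan is to proceed by induction on $m$ and to invoke Lemma~\ref{bl} at the inductive step; the key observation is that each factor $\mathcal{F}_i$ is $\kappa_i$-complete with $\kappa_i$ strictly larger than the cardinality of the set on which $\mathcal{U}_{i-1}$ lives.

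For the base case $m=0$ there is nothing to prove, since $\mathcal{U}_0$ is by assumption a (free) ultrafilter on $\omega$. For the inductive step, assume $\mathcal{U}_{m-1}$ is an ultrafilter on $X_{m-1}:=\kappa_0\times\kappa_1\times\dots\times\kappa_{m-1}$. Since $\omega=\kappa_0<\kappa_1<\dots<\kappa_{m-1}<\kappa_m$ is an increasing sequence of infinite cardinals, $|X_{m-1}|=\kappa_{m-1}<\kappa_m$. I would apply Lemma~\ref{bl} with $\mathcal{F}:=\mathcal{U}_{m-1}$ and $\mathcal{G}:=\mathcal{F}_m$: given any function $f:X_{m-1}\to\mathcal{F}_m$, the collection $\{f(x):x\in X_{m-1}\}\subseteq \mathcal{F}_m$ has size at most $\kappa_{m-1}<\kappa_m$, so by $\kappa_m$-completeness of $\mathcal{F}_m$ the intersection
\[
\bigcap_{x\in X_{m-1}} f(x)
\]
belongs to $\mathcal{F}_m$. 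Taking $F:=X_{m-1}\in\mathcal{U}_{m-1}$ verifies condition (ii) of Lemma~\ref{bl}, and therefore $\mathcal{U}_m=\mathcal{U}_{m-1}\times\mathcal{F}_m$ is an ultrafilter, completing the induction.

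There is no serious obstacle here: the whole point of the completeness hypothesis on $\mathcal{F}_m$ is exactly to let us take $F$ to be the entire underlying set, bypassing the need for any delicate choice of $F\in\mathcal{U}_{m-1}$. The only bookkeeping matter is to confirm that $|X_{m-1}|<\kappa_m$, which holds because the $\kappa_i$ form a strictly increasing sequence of infinite cardinals and so $|X_{m-1}|$ coincides with the largest factor $\kappa_{m-1}$.
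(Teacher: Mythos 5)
Your proof is correct and follows essentially the same route as the paper: induction on $m$, the observation that $|\prod_{i\le m-1}\kappa_i|=\kappa_{m-1}<\kappa_m$, and an appeal to Lemma~\ref{bl} via the $\kappa_m$-completeness of $\mathcal{F}_m$. You merely make explicit the step the paper leaves implicit, namely that one may take $F$ to be the entire underlying set when verifying condition (ii) of Lemma~\ref{bl}.
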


\begin{proof}
Assume that for some $m<n$, $\mathcal{U}_m$ is an ultrafilter. Observe that $|\prod_{i\leq m}\kappa_i|=\kappa_{m}<\kappa_{m+1}$ and the filter $\mathcal{F}_{m+1}$ is $\kappa_{m+1}$-complete. Therefore, Lemma~\ref{bl} implies that $\mathcal{U}_{m+1}=\mathcal{U}_m{\times}\mathcal{F}_{m+1}$ is an ultrafilter.
\end{proof}

\begin{lemma}\label{sd}
Let $\kappa$ be a measurable cardinal, $\mathcal {D}=\{x_{\alpha}:\alpha\in\kappa\}$ be a strongly discrete subset of $\beta(\kappa)$, $\phi:\overline{\mathcal {D}}\rightarrow \beta(\kappa)$ be a homeomorphism such that $\phi(x_{\alpha})=\alpha$ for each $\alpha\in\kappa$, and $x\in \overline{\mathcal {D}}\setminus \mathcal D$. If the ultrafilters $x_{\alpha}$, $\alpha\in \kappa$ are $\kappa$-complete and $\phi(x)$ is $\kappa$-complete, then $x$ is $\kappa$-complete.
\end{lemma}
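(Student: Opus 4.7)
The plan is to realize $x$ as a $\phi(x)$-limit of the ultrafilters $x_\alpha$, from which the $\kappa$-completeness of $x$ will follow by combining the $\kappa$-completeness of each $x_\alpha$ with that of $\phi(x)$.

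First I would shrink the witnesses of strong discreteness to basic clopens: using that $\{\langle S\rangle:S\subseteq\kappa\}$ is a base for $\beta(\kappa)$, one may assume that the family separating $\mathcal{D}$ has the form $\{\langle S_\alpha\rangle:\alpha\in\kappa\}$ with pairwise disjoint $S_\alpha\subseteq\kappa$ and $S_\alpha\in x_\alpha$. With this setup I claim the explicit description
$$T\in\phi(x)\iff \bigcup_{\alpha\in T}S_\alpha\in x \qquad (T\subseteq\kappa).$$
Indeed, both $\phi^{-1}(\langle T\rangle)$ and $\overline{\mathcal{D}}\cap\langle \bigcup_{\alpha\in T}S_\alpha\rangle$ are clopen subsets of $\overline{\mathcal{D}}$ whose traces on the dense subspace $\mathcal{D}$ both equal $\{x_\alpha:\alpha\in T\}$ (disjointness of the $S_\beta$'s gives $x_\beta\notin\langle\bigcup_{\alpha\in T}S_\alpha\rangle$ for $\beta\notin T$), so they coincide. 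Taking $T=\kappa$ yields in particular $\bigcup_\alpha S_\alpha\in x$.

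The main step, and the principal obstacle, will be proving the equivalence
$$B\in x\iff T(B):=\{\alpha\in\kappa: B\in x_\alpha\}\in\phi(x) \qquad (\ast)$$
for every $B\subseteq\kappa$. Its crux is the following auxiliary observation: if $E\subseteq\bigcup_\alpha S_\alpha$ satisfies $E\cap S_\alpha\notin x_\alpha$ for every $\alpha\in\kappa$, then $E\notin x$. For otherwise $\langle E\rangle$ would be a neighborhood of $x\in\overline{\mathcal{D}}$ and would therefore contain some $x_\beta$, forcing $E,S_\beta\in x_\beta$ and hence $E\cap S_\beta\in x_\beta$, a contradiction. Given this, the ``$\Rightarrow$'' direction of $(\ast)$ follows by applying the observation to $E:=B\cap\bigcup_{\alpha\notin T(B)}S_\alpha$, which yields $B\cap\bigcup_{\alpha\in T(B)}S_\alpha\in x$ (since this together with $E$ covers $B\cap\bigcup_\alpha S_\alpha\in x$), and hence $T(B)\in\phi(x)$ by the previous paragraph; the ``$\Leftarrow$'' direction is obtained similarly by applying the observation to $E:=\bigcup_{\alpha\in T(B)}S_\alpha\setminus B$.

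Finally, given $\mu<\kappa$ and $\{B_\lambda:\lambda<\mu\}\subseteq x$, each $T_\lambda:=T(B_\lambda)$ lies in $\phi(x)$ by $(\ast)$, so $\kappa$-completeness of $\phi(x)$ yields $T:=\bigcap_{\lambda<\mu}T_\lambda\in\phi(x)$. For every $\alpha\in T$ the $\kappa$-completeness of $x_\alpha$ gives $\bigcap_{\lambda<\mu}B_\lambda\in x_\alpha$, so $T(\bigcap_\lambda B_\lambda)\supseteq T\in\phi(x)$, and applying $(\ast)$ in the reverse direction yields $\bigcap_{\lambda<\mu}B_\lambda\in x$, proving that $x$ is $\kappa$-complete.
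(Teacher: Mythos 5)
Your proof is correct; every step checks out (the shrinking of the separating family to pairwise disjoint basic clopen sets $\langle S_\alpha\rangle$, the identification $T\in\phi(x)\iff\bigcup_{\alpha\in T}S_\alpha\in x$ via equality of clopen sets with the same trace on the dense set $\mathcal D$, the two directions of $(\ast)$, and the final completeness computation). It does, however, take a more structural route than the paper. You prove the full two-way characterization $(\ast)$, i.e.\ that $x$ is the $\phi(x)$-limit of the fibre ultrafilters $x_\alpha$, and then deduce $\kappa$-completeness directly by intersecting the index sets $T(B_\lambda)$ in $\phi(x)$ and the sets $B_\lambda$ in each $x_\alpha$. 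The paper instead argues by contradiction and needs only the easy direction of your $(\ast)$: assuming $x$ is not $\kappa$-complete, it takes (using that $x$ is an ultrafilter) a family $\{F_\xi:\xi\in\lambda\}\subset x$, $\lambda<\kappa$, with empty intersection, notes that each index set $\{\alpha: F_\xi\in x_\alpha\}$ is sent by $\phi$ into $\phi(x)$ (because $x\in\cl(\mathcal D\cap\langle F_\xi\rangle)$ and $\phi$ is a homeomorphism), uses $\kappa$-completeness of $\phi(x)$ to find a single $\gamma$ with $F_\xi\in x_\gamma$ for all $\xi$, and then $\kappa$-completeness of $x_\gamma$ makes $\bigcap_\xi F_\xi$ nonempty, a contradiction; in particular it never needs the disjoint sets $S_\alpha$ or the reverse implication of $(\ast)$. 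Your argument is longer but yields more: the explicit description of $x$ as a sum ultrafilter over $\phi(x)$, which is of independent interest; the paper's argument is the minimal one needed for the stated lemma.
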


\begin{proof}
Let us note that the homeomorphism $\phi$ exists by Lemma~\ref{d}. %Since $\phi$ is a homeomorphism we deduce that $\phi(\mathcal D)=\kappa$.
To derive a contradiction, assume that $x$ is not $\kappa$-complete. Then there exists a cardinal $\lambda<\kappa$ and a family $\{F_{\xi}:\xi\in \lambda\}\subset x$ such that $\bigcap_{\xi\in \lambda}F_{\xi}=\emptyset$. For each $\xi\in\kappa$ let $X_{\xi}=\{x_{\alpha}: F_{\xi}\in x_{\alpha}\}$. Since $x\in \overline{\{x_{\alpha}:\alpha\in\kappa\}}$ the sets $X_{\xi}$ are nonempty. Taking into account that $\phi$ is a homeomorphism and $\phi(x_{\alpha})=\alpha$ for each $\alpha\in\kappa$, it is easy to see that $\phi(X_{\xi})\in \phi(x)$ for each $\xi\in\lambda$. Since the filter $\phi(x)$ is $\kappa$-complete, the set $\bigcap_{\xi\in\lambda}\phi(X_{\xi})$ is nonempty and belongs to $\phi(x)$. Pick any
$$x_{\gamma}\in \phi^{-1}(\bigcap_{\xi\in\lambda}\phi(X_{\xi}))=\bigcap_{\xi\in\lambda}\phi^{-1}(\phi(X_{\xi}))=\bigcap_{\xi\in\lambda}X_{\xi}.$$
Note that $F_{\xi}\in x_{\gamma}$ for each $\xi\in \lambda$. Since the ultrafilter $x_{\gamma}$ is $\kappa$-complete, the set $\bigcap_{\xi\in\lambda}F_{\xi}$ belongs to $x_{\gamma}$, and hence it is nonempty, which contradicts the assumption.
\end{proof}

 The proof of the next lemma resembles the proof of Theorem~\ref{thnew}.
\begin{lemma}\label{ss}
Let $\kappa$ be a measurable cardinal. Then for each positive integer $n$ there exists a  $\kappa$-comple\-te ultrafilter $\U_n$ of $\kappa$ such that $\mathfrak h(\F_n)>n$. 
\end{lemma}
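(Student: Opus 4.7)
The plan is to proceed by induction on $n$, mimicking the construction in the proof of Theorem~\ref{thnew} but replacing countable discrete subsets of $\beta(\w)$ with strongly discrete subsets of $\beta(\kappa)$ of cardinality $\kappa$, and using $\kappa$-completeness at every stage. For the base case $n=1$, pick any free $\kappa$-complete ultrafilter $\U_1$ on $\kappa$, which exists by the measurability of $\kappa$. The scattered subspace $\kappa\cup\{\U_1\}\subset\beta(\kappa)$ then witnesses $\mathfrak{h}(\U_1)>1$: its only Cantor--Bendixson level below the top is $\kappa$ itself, on which $\mathcal N(\U_1)$ obviously traces an ultrafilter (namely $\U_1$), and the selector condition of Definition~\ref{def} is trivial since $\U_1$ is closed under finite intersections.

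For the inductive step, assume $\U_n$ is a $\kappa$-complete ultrafilter on $\kappa$ with $\mathfrak{h}(\U_n)>n$, witnessed by a scattered subspace $X_n\subset\beta(\kappa)$. Following the recipe of Proposition~\ref{pm}, I would form the topological sum $Y$ of $\kappa$ disjoint copies of $X_n$ by fixing a partition $\{P_\xi:\xi\in\kappa\}$ of $\kappa$ into $\kappa$ pieces of cardinality $\kappa$ and embedding the $\xi$-th copy of $X_n$ into $\langle P_\xi\rangle\cong\beta(\kappa)$. Writing $\U_n^\xi$ for the copy of $\U_n$ living inside $\langle P_\xi\rangle$, the top level $Y^{(n)}=\{\U_n^\xi:\xi\in\kappa\}$ is a strongly discrete subset of $\beta(\kappa)$ of cardinality $\kappa$, and Lemma~\ref{d} supplies a homeomorphism $\phi:\cl_{\beta(\kappa)}(Y^{(n)})\to\beta(\kappa)$ with $\phi(\U_n^\xi)=\xi$. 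Picking any free $\kappa$-complete ultrafilter $\mathcal V$ on $\kappa$, I would set $\U_{n+1}=\phi^{-1}(\mathcal V)$ and $Z=Y\cup\{\U_{n+1}\}\subset\beta(\kappa)$, so that $Z$ is scattered and $Z^{(n+1)}=\{\U_{n+1}\}$.

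The $\kappa$-completeness of $\U_{n+1}$ follows immediately from Lemma~\ref{sd}: the $\U_n^\xi$ are $\kappa$-complete by induction and $\phi(\U_{n+1})=\mathcal V$ is $\kappa$-complete by choice. That $Z$ witnesses $\mathfrak{h}(\U_{n+1})>n+1$ is then essentially the verification performed in the proof of Proposition~\ref{pm}. At each level $\xi\leq n+1$ the filter $\mathcal N(\U_{n+1})$ traces an ultrafilter on $Z^{(\xi)}$: at level $n$ this trace is the transport of $\mathcal V$ under $\phi$, while at levels $\xi<n$ it combines the ultrafilter nature of $\mathcal V$ with the inductive trace property inside each copy $X_n^\delta$. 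For the selector condition with $\gamma\leq n+1$ and $U=\bigcup_{\xi\in\gamma}F_\xi$ open in $Z$, pick $A_\xi\in\U_{n+1}$ with $\langle A_\xi\rangle\cap Z^{(\xi)}\subseteq F_\xi$, intersect the finitely many $A_\xi$'s, push the intersection through $\phi$ to obtain a set $D\in\mathcal V$ of ``good'' indices, invoke the inductive selector property inside each $X_n^\delta$ with $\delta\in D$, and union the resulting witnesses.

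The main obstacle is not any single computation but the bookkeeping of Cantor--Bendixson levels across the embedded copies and the verification that the inductive witness property transports cleanly under the identification of $\cl_{\beta(\kappa)}(Y^{(n)})$ with $\beta(\kappa)$. The only genuinely new ingredient compared with Proposition~\ref{pm} is Lemma~\ref{sd}, which does the essential work of promoting the $\kappa$-completeness of the trace at level $n$ to $\kappa$-completeness of $\U_{n+1}$ itself, thereby permitting the induction to carry on to the next step.
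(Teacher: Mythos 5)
Your proposal is correct, and it rests on the same two ingredients as the paper's argument --- Lemma~\ref{d}, to identify the closure of a strongly discrete $\kappa$-sized set with $\beta(\kappa)$, and Lemma~\ref{sd}, to push $\kappa$-completeness up to the new top point --- but it is organized differently. The paper does not induct on $n$: it builds the levels $S_0,\dots,S_{n-1}$ directly inside a single subspace of $\beta(\kappa)$, at each step partitioning the current top level into $\kappa$ blocks of size $\kappa$ and placing one copy of a fixed $\kappa$-complete ultrafilter above each block (lower levels are never duplicated), and then defers the verification of Definition~\ref{def} to the argument of Theorem~\ref{thnew}; in particular every level is strongly discrete by construction, so the trace condition at each level comes straight from Lemma~\ref{d}. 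Your induction, which clones the entire previous witness $\kappa$ times per step in the style of Proposition~\ref{pm}, unrolls to essentially the same space, and your use of Lemma~\ref{sd} is exactly the paper's. Two small remarks. First, since $\gamma\leq n+1$ is finite, condition (2) of Definition~\ref{def} already follows from closure of $\U_{n+1}$ under finite intersections (as observed in the proof of Theorem~\ref{thnew}), so the Proposition~\ref{pm}-style detour through a set of ``good'' indices in $\mathcal V$ is unnecessary here; the $\kappa$-completeness of $\mathcal V$ is needed only as input to Lemma~\ref{sd}. Second, at levels $\xi<n$ you cannot invoke strong discreteness of $Z^{(\xi)}$ unless you carry it as an inductive invariant (Definition~\ref{def} does not supply it), so the claim that $\mathcal N(\U_{n+1})$ traces an ultrafilter there really does need the ``sum the copy-wise trace ultrafilters along the level-$n$ trace'' argument you gesture at; that argument is routine and correct, but it is the one place where the bookkeeping genuinely has to be carried out.
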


\begin{proof}
Fix a positive integer $n$.
Let us construct a scattered subspace $S_n$ of $\beta(\kappa)$ which witnesses that $\mathfrak h(\U_n)>n$.
Let $S_0=\kappa$. Assume that for some $i<n-1$ we constructed a strongly discrete set $S_i=\{x_{\alpha}:\alpha\in\kappa\}\subset\beta(\kappa)$ of cardinality $\kappa$ which consists of $\kappa$-complete ultrafilters. Next we shall construct a strongly discrete subset $S_{i+1}$. Decompose $\kappa$ into pairwise disjoint subsets $K_{\xi}$, $\xi\in\kappa$ such that $|K_{\xi}|=\kappa$ and $\bigcup_{\xi\in\kappa}K_{\xi}=\kappa$.
For each $\xi\in\kappa$ let $Y_{\xi}=\{x_{\alpha}:\alpha\in K_{\xi}\}$. Lemma~\ref{d} implies that for each $\xi\in\kappa$ there exists a homeomorphism $\phi_{\xi}: \overline{Y_{\xi}}\rightarrow \beta(\kappa)$ such that $\phi_{\xi}(Y_{\xi})=\kappa$.  Pick any free $\kappa$-complete ultrafilter $\F$ on $\kappa$. Let $S_{i+1}=\{\phi_{\xi}^{-1}(\F):\xi\in\kappa\}$. Since $S_i$ consists of $\kappa$-complete ultrafilters, Lemma~\ref{sd} implies that $S_{i+1}$ also consists of $\kappa$-complete ultrafilters. Let us show that $S_{i+1}$ is strongly discrete. Since the set $S_i$ is strongly discrete, there exist pairwise disjoint sets $F_{\alpha}\in x_{\alpha}$, $\alpha\in \kappa$. Note that for each $\xi\in\kappa$ the set $H_{\xi}=\bigcup_{\alpha\in K_{\xi}}F_{\alpha}$ belongs to $\phi_{\xi}^{-1}(\F)$. Since the sets $K_{\xi}$, $\xi\in\kappa$ are pairwise disjoint we get that the sets $H_{\xi}$ are pairwise disjoint as well, witnessing that the set $S_{i+1}$ is strongly discrete.
This way we construct the sets $S_i$ for $i\leq n-1$. Let $\F$ be a free $\kappa$-complete ultrafilter on $\kappa$ and $\phi: \overline{S_{n-1}}\rightarrow \beta(\kappa)$ be any homeomorphism such that $\phi(S_{n-1})=\kappa$ (which exists by Lemma~\ref{d}). Put $\U_n=\phi^{-1}(\F)$. Lemma~\ref{sd} ensures that the ultrafilter $\U_n$ is $\kappa$-complete. Let $S$ be the subspace $\bigcup_{i\in n}S_i\cup\{\U_n\}$ of  $\beta(\kappa)$. One can easily check that $S$ is scattered, $S^{(i)}=S_i$ for $i<n$ and $S^{(n)}=\{\U_n\}$. Similarly as in the proof of Theorem~\ref{thnew} one can check that the scattered space $S$ satisfies conditions (1) and (2) from Definition~\ref{def}, witnessing that  $\mathfrak h(\U_n)>n$.
\end{proof}

\begin{lemma}\label{plus}
Let $\F,\G$ be filters on a set $X$ and $\mathcal A$ a finite family of subsets of $X$ such that $\bigcup \mathcal A\in \F\cap \G$. If for each $A\in\mathcal A$ the traces of the filters $\F$ and $\G$ on $A$ coincide, then $\F=\G$.
\end{lemma}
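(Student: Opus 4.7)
The plan is to prove $\F\subseteq\G$; the reverse inclusion follows by symmetry. So fix $F\in\F$ and aim to produce $G\in\G$ with $G\subseteq F$, which forces $F\in\G$.

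My first move would be to replace $F$ by $F\cap\bigcup\mathcal A$. Since $\bigcup\mathcal A\in\F$, this intersection is still in $\F$, and after replacement we may assume $F\subseteq\bigcup\mathcal A$. This reduction is the reason the hypothesis $\bigcup\mathcal A\in\F\cap\G$ is built into the statement: it lets us localize $F$ to the pieces $A\in\mathcal A$.

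Next, for each $A\in\mathcal A$, the set $F\cap A$ belongs to the trace of $\F$ on $A$, which by hypothesis equals the trace of $\G$ on $A$. Hence there exists $G_A\in\G$ with $G_A\cap A=F\cap A$. Now I would assemble these witnesses into a single element of $\G$ using finiteness of $\mathcal A$: put
\[
G=\Bigl(\bigcup\mathcal A\Bigr)\cap\bigcap_{A\in\mathcal A}G_A.
\]
Because $\G$ is a filter, $\bigcup\mathcal A\in\G$, and $\mathcal A$ is finite, we have $G\in\G$. For any $x\in G$, pick $A\in\mathcal A$ with $x\in A$; then $x\in G_A\cap A=F\cap A\subseteq F$. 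Thus $G\subseteq F$, which gives $F\in\G$, as desired.

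I do not anticipate any real obstacle. The two places where the hypotheses are used are essential and clearly visible: finiteness of $\mathcal A$ is needed so that $\bigcap_{A\in\mathcal A}G_A\in\G$, and the assumption $\bigcup\mathcal A\in\F\cap\G$ is what enables both the reduction to $F\subseteq\bigcup\mathcal A$ and the reassembly step where we reconstruct an element below $F$ from local data on each piece $A\in\mathcal A$.
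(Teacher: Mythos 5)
Your proof is correct and follows essentially the same argument as the paper: for $F\in\F$ choose witnesses $G_A\in\G$ with $G_A\cap A=F\cap A$, intersect the finitely many $G_A$ together with $\bigcup\mathcal A$, and observe that the resulting element of $\G$ lies inside $F$ (the paper simply builds the constraint $G_A\subseteq\bigcup\mathcal A$ into the choice of witnesses instead of intersecting at the end). The preliminary reduction to $F\subseteq\bigcup\mathcal A$ is harmless but not needed.
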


\begin{proof}
Fix any $F\in \F$ and for each $A\in \mathcal A$ let $F_A=F\cap A$. By the assumption, for every $A\in\mathcal A$ there exists $\bigcup\mathcal A\supseteq G_A\in\G$ such that $G_A\cap A\subseteq F_A$. Then $\bigcap_{A\in\mathcal A}G_A\in\G$ and $(\bigcap_{A\in\mathcal A}G_A) \subseteq \bigcup_{A\in\mathcal A}F_A\subseteq F$,
witnessing that $\F\subseteq \G$. Similarly,  one can show the converse inclusion.
\end{proof}

\begin{theorem}\label{t2}
Let $\{\kappa_i:1\leq i\leq n-1\}$ be an increasing sequence of measurable cardinals.  Then for each sequence $\{m_i: i\in n\}$ of positive integers there exists a Hausdorff space $X$ such that the lattice $\mathbf{OF}(X)$ is order isomorphic to $\prod_{i\in n}m_i$.
\end{theorem}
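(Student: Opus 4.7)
The plan is to combine the witness spaces of ultrafilters with large heights in each coordinate via iterated products, mimicking Proposition~\ref{prneww} with the linear Cantor--Bendixson tower replaced by a product tower. First, pick ultrafilters $\U_0,\ldots,\U_{n-1}$ on $\kappa_0=\w,\kappa_1,\ldots,\kappa_{n-1}$ with $\mathfrak h(\U_i)>m_i-1$, using Theorem~\ref{thnew} for $i=0$ and Lemma~\ref{ss} for $i\geq 1$; the latter additionally yields $\kappa_i$-completeness of $\U_i$. Let $S_i\subseteq\beta(\kappa_i)$ be the scattered witness space from Definition~\ref{def}, so that $S_i$ has Cantor--Bendixson levels $S_i^{(0)},\ldots,S_i^{(m_i-1)}$ with $S_i^{(m_i-1)}=\{\U_i\}$. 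Iterating Corollary~\ref{u}, the product $\U=\U_0\times\cdots\times\U_{n-1}$ is an ultrafilter on $\prod_{i\in n}\kappa_i$.

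Construct a scattered subspace $W\subseteq\beta(\prod_{i\in n}\kappa_i)$ whose points are the iterated product ultrafilters $x_0\times\cdots\times x_{n-1}$ for $x_i\in S_i$. For each tuple $a=(a_0,\ldots,a_{n-1})$ with $a_i\in\{0,\ldots,m_i-1\}$, set $W^a=\{x_0\times\cdots\times x_{n-1}:x_i\in S_i^{(a_i)}\}$; then $W=\bigcup_a W^a$ and $W^{(m_0-1,\ldots,m_{n-1}-1)}=\{\U\}$. A transfinite induction using Lemma~\ref{d} (to identify closures of strongly discrete strata of $W$ with copies of the Stone--\v{C}ech compactification) and Lemma~\ref{sd} (to propagate completeness to points of higher strata) shows that the $W^a$ are precisely the Cantor--Bendixson strata of $W$, that $W$ is scattered, and that for every tuple $\delta$ with $1\leq\delta_i\leq m_i$ the set $\bigcup_{b:b_i<\delta_i\,\forall i}W^b$ is open in $W$.

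Put $Y=W\setminus\{\U\}$, and let $\Phi$ be the trace of $\mathcal N(\U)$ on $Y$; as in the proof of Proposition~\ref{prneww}, $\Phi$ is a free regular open filter on $Y$. The heart of the proof is to identify the subposet $\{\G\in\mathbf{OF}(Y):\Phi\subseteq\G\}$ with $(\prod_{i\in n}m_i,\geq)$. For each $\delta=(\delta_0,\ldots,\delta_{n-1})$ with $1\leq\delta_i\leq m_i$, set
\[
R_\delta=\bigcup\bigl\{W^a:a_i<\delta_i\text{ for every }i\in n\bigr\},
\]
and let $\G_\delta$ be the filter on $Y$ generated by $\Phi\cup\{R_\delta\}$. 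The verification splits into (i) $R_\delta$ is open in $Y$ (from the previous step), (ii) $\G_\delta$ is a free open filter containing $\Phi$, (iii) $\delta\leq\delta'$ coordinatewise iff $\G_\delta\supseteq\G_{\delta'}$, strictly if the inequality is strict, and (iv) every $\G\in\mathbf{OF}(Y)$ with $\Phi\subseteq\G$ equals $\G_\delta$ for a unique $\delta$. Applying Proposition~\ref{supernew} to $(Y,\Phi)$ then yields a space $X$ with $\mathbf{OF}(X)$ order isomorphic to $(\prod_{i\in n}m_i,\geq)$; the latter is isomorphic to $\prod_{i\in n}m_i$ since products of finite chains are self-dual.

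The main obstacle is (iv). Given $\G\in\mathbf{OF}(Y)$ with $\Phi\subseteq\G$, one tries to set
\[
\delta_i=\min\Bigl\{\xi\in\{1,\ldots,m_i\}:\exists G\in\G\ \bigl(G\cap\textstyle\bigcup_{b:b_i\geq\xi}W^b=\emptyset\bigr)\Bigr\}
\]
and to prove $\G=\G_\delta$. The inclusion $\G_\delta\subseteq\G$ is immediate from the choice of $\delta$; the converse requires that, given an open $G\in\G$, one produces $F\in\U$ with $\langle F\rangle\cap Y\subseteq G\cup(Y\setminus R_\delta)$, which is the product analogue of condition~(2) in Definition~\ref{def} and must be carried out simultaneously across all $n$ coordinates. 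The resolution leverages Blass's Lemma~\ref{bl} (coherent selection across the factors of a product ultrafilter), the $\kappa_i$-completeness of $\U_i$ for $i\geq 1$ (to diagonalize over fewer-than-$\kappa_i$ many witnesses coming from the other coordinates, since $|\prod_{j<i}\kappa_j|<\kappa_i$), and Lemma~\ref{plus} applied to the finite partition of $Y$ by the strata $W^a$ to glue these coordinate-wise selections into a global equality $\G=\G_\delta$.
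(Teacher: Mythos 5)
Your opening moves (ultrafilters of large height in each coordinate, the product ultrafilter $\U=\U_0\times\cdots\times\U_{n-1}$, passing to $Y=W\setminus\{\U\}$, finishing with Proposition~\ref{supernew}) are in the spirit of the paper, but the classification step (iv) is false for the full product space you build, and this is exactly where the paper's construction differs from yours. Your $W$ is homeomorphic to the Tychonoff product $\prod_{i\in n}S_i$: the map $(x_0,\ldots,x_{n-1})\mapsto x_0\times\cdots\times x_{n-1}$ is an embedding, since a rectangle $F_0\times\cdots\times F_{n-1}$ belongs to the product ultrafilter if and only if $F_i\in x_i$ for all $i$. In this product, for \emph{every} downward closed set $D$ of level-tuples the set $R_D=\bigcup_{a\in D}W^a$ is open (a finite union of open boxes $\prod_i\bigcup_{j\le a_i}S_i^{(j)}$), and the filter generated by $\Phi\cup\{R_D\}$ is a free open filter containing $\Phi$. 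Distinct $D$ give distinct filters, because the strata $W^a$ that are positive for this filter are precisely those with $a\in D$ (every basic neighbourhood of $\U$ meets every stratum, since the traces of $\mathcal N(\U_i)$ on the levels of $S_i$ are ultrafilters). As soon as $n\ge 2$ and some $m_i\ge 3$ there are non-box lower sets: for $n=2$, $m_0=m_1=3$ the staircase $D=\{a: a_0+a_1\le 2\}$ gives a filter for which $W^{(2,0)}$ and $W^{(0,2)}$ are positive but $W^{(2,1)}$ is not, a positivity pattern no box filter $\G_\delta$ has. So $\{\G\in\mathbf{OF}(Y):\Phi\subseteq\G\}$ contains (at least) a copy of the lattice of nonempty lower sets of the index grid, which is strictly larger than $\prod_{i\in n}m_i$; your coordinatewise definition of $\delta_i$ cannot recover such filters, and no gluing via Blass's lemma, completeness, or Lemma~\ref{plus} can repair this, because the extra filters genuinely live in the space you constructed. (A smaller slip: the $W^a$ are not the Cantor--Bendixson levels of $W$ --- the rank of a product point is the sum of the coordinate ranks --- but that is cosmetic by comparison.)

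The paper avoids the staircase phenomenon by not taking the full product: its space consists of the dense set of isolated points $\prod_{i\in n}\kappa_i$ together with only the $n$ ``axes'' $E_i=\{\F_0\}\times\cdots\times\{\F_{i-1}\}\times S_i\times\{\F_{i+1}\}\times\cdots\times\{\F_{n-1}\}$ through the corner. There a free open filter above the trace of $\mathcal N(\Phi)$ is determined by, for each $i$ separately, whether $E_i$ is positive and, if so, which member of the finite chain supplied by (the proof of) Proposition~\ref{prneww} its trace on $E_i$ is; Lemma~\ref{plus} glues these independent coordinatewise data, so the poset decomposes as a product of chains. Because points with two coordinates of positive rank are absent, no staircase filters can arise. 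Your argument does go through in the degenerate case where every $S_i$ has at most two levels (then your $W$ coincides with the paper's axes-plus-grid space), but for larger $m_i$ you must prune $W$ down to that configuration --- which is precisely the paper's proof.
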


\begin{proof}
Let $\kappa_0=\w$.
By Theorem~\ref{thnew} there exists an ultrafilter $\F_0$ on $\w$ of height $>m_0$. Let $S_0$ be a scattered subspace of $\beta(\w)$, which witnesses $\mathfrak h(\F_0)>m_0$, i.e. $S_0$ satisfies conditions (1) and (2) from Definition~\ref{def}.
By Lemma~\ref{ss}, for each $1\leq i< n$ there exists a $\kappa_i$-complete ultrafilter $\F_i$  on $\kappa_i$ of height $>m_i$. Let $S_i$ be a scattered subspace of $\beta(\kappa_i)$ which witnesses  $\mathfrak{h}(\F_i)>m_i$.  
By the proofs of Theorem~\ref{thnew} and Lemma~\ref{ss}, without loss of generality we can assume that $\kappa_i=S_i^{(0)}$ for each $i\in n$.

By Proposition~\ref{supernew}, it suffices to construct a space $Z$ and a free regular open filter $\F$ on $Z$ such that the subposet $\{\G\in\mathbf{OF}(Z): \F\subseteq \G\}$ of $\mathbf{OF}(Z)$ is order isomorphic to $\prod_{i\in n}m_i$. For each $i\in n$ put
$$E_i=\{\F_0\}{\times}\dots {\times}\{\F_{i-1}\}{\times}S_i{\times}\{\F_{i+1}\}{\times}\dots{\times}\{\F_{n-1}\}.$$
Let $Y$ be the subspace $(\bigcup_{i\in n}E_i)\cup\prod_{i\in n}\kappa_i$ of the Tychonoff product $\prod_{i\in n}S_i$. Also, let $\Phi=(\F_0,\F_1,\ldots,\F_{n-1})$, $Z=Y\setminus \{\Phi\}$, and $\F$ be the trace of $\mathcal N(\Phi)$ on $Z$. 
Observe that the set $\prod_{i\in n}\kappa_i$ is dense in $Z$ and consists of isolated points. It is straightforward to check that for each $i\in n$ the set $(E_i\setminus \{\Phi\}) \cup \prod_{i\in n}\kappa_i$ is open and dense in $Z$.

Since for each $i\in n$ the space $S_i$ is regular, we get that $Y$ is regular as well. It follows that $\F$ is a free regular open filter. For each $i\in n$ let $\mathbb P_i=\{\G\in\mathbf{OF}(S_i): \mathcal N(\F_i)\subseteq \G\}$. 
The proof of Proposition~\ref{prneww} implies that for each $i\in n$, $\mathbb P_i=\{\G^{i}_k:0<k\leq m_i\}$, where $\G^{i}_k$ is generated by $\mathcal N(\F_i)\cup\{\bigcup_{j\in k}{S_i^{(j)}}\}$. It follows that for each $i\in n$ the set of all free open filters on $E_i$, which contain the trace of $\F$ on $E_i$,
coincides with the set $\{\H^{i}_k:0< k\leq m_i\}$, where 
$$\H^i_k=\{\{\F_0\}{\times}...{\times}\{\F_{i-1}\}{\times}G{\times}\{\F_{i+1}\}...{\times}\{\F_{n-1}\}: G\in \G^i_k\}.$$

For each $\vec{v}=(v_0,\ldots,v_{n-1})\in \prod_{i\in n}m_i$ let $T_{\vec{v}}=\{i\in n: v_i>0\}$ and $\mathcal W_{\vec{v}}$ be the filter on $Z$ generated by the family $$\{A\cup \bigcup_{i\in T_{\vec{v}}}B_i: A\in\prod_{i\in n}\F_i \hbox{ and } B_i\in \mathcal H^i_{v_i}\}.$$
It is clear that $\F\subseteq \mathcal W_{\vec{v}}$ for all $\vec{v}\in\prod_{i\in n}m_i$. Therefore, the filters $\mathcal W_{\vec{v}}$, $\vec{v}\in\prod_{i\in n}m_i$ are free. 
Taking into account that for each $i\in n$ the set $E_i\cup\prod_{i\in n}\kappa_i$ is open in $Z$, it is routine to check that $\mathcal W_{\vec{v}}$ is an open filter for each $\vec{v}\in \prod_{i\in n}m_i$.

Fix any free open filter $\Psi$ on $Z$ which contains $\F$. Since $\Psi$ is an open filter, the open dense set $\prod_{i\in n}\kappa_i$ is positive with respect to $\Psi$. Recall that $\F$ traces on $\prod_{i\in n}\kappa_i$ the ultrafilter $\prod_{i\in n}\F_i$. Since $\F\subseteq \Psi$, we get that $\Psi$ also traces on $\prod_{i\in n}\kappa_i$ the ultrafilter $\prod_{i\in n}\F_i$. Let 
$$Q=\{i\in n: E_i \hbox { is positive with respect to }\Psi\}.$$ 
Since $\F\subseteq \Psi$, for each $i\in Q$ there exists $0<k_i\leq m_i$ such that the trace of $\Psi$ on $E_i$ coincides with the filter $\mathcal H_{k_i}^i$. Lemma~\ref{plus} implies that $\Psi$ coincides with the filter $\mathcal W_{\vec{v}}$, where $v_i=k_i$ if $i\in Q$ and $v_i=0$, otherwise. 

Hence $\mathbf{OF}(X)=\{\mathcal W_{\vec{v}}:\vec{v}\in\prod_{i\in n}m_i\}$. At this point it is easy to see that the lattice $\mathbf{OF}(X)$ is order isomorphic to $\prod_{i\in n}m_i$.
\end{proof}

Theorem~\ref{t1} implies that for every lattice $L$ with $|L|\leq 3$ there is a space $X$ such that $\mathbf{OF}(X)$ is order isomorphic to $L$.
Theorems~\ref{t1} and~\ref{t2} provide that the existence of a measurable cardinal implies that for each lattice $L$ of cardinality $4$, there is a space $X$ such that $\mathbf{OF}(X)$ is order isomorphic to $L$.
Theorem~\ref{char} yields the existence of two (non-distributive) five-element lattice which are not isomorphic to $\mathbf{OF}(X)$ for any space $X$.
The next example shows that assuming the existence of two  measurable cardinals for any five-element distributive lattice $L$ there exists a space $X$ such that $\mathbf{OF}(X)$ is isomorphic to $L$. Let $L_0=\{(0,0),(1,1),(1,2),(2,1),(2,2)\}$ and $L_1=\{(0,0),(1,0),(0,1),(1,1),(2,2)\}$  be the sublattices of $\omega^2$ displayed below.
%Let $L_0$ and $L_1$ be the lattices displayed below. 

\begin{figure}[H]
\begin{center}
\begin{tikzpicture}[]
\node[circle,draw] (a) at (0, 0) {};
\node[circle,draw] (b) at (1, 1) {};
\node[circle,draw] (c) at (1, 2) {};
\node[circle,draw] (d) at (2, 1) {};
\node[circle,draw] (e) at (2, 2) {};
\node[circle,draw] (f) at (7, 0) {};
\node[circle,draw] (g) at (8, 0) {};
\node[circle,draw] (h) at (7, 1) {};
\node[circle,draw] (i) at (8, 1) {};
\node[circle,draw] (j) at (9, 2) {};
\foreach \from/\to in {a/b,b/c,b/d,c/e,d/e,f/g,f/h,g/i,i/h,j/i} \draw [-] (\from) -- (\to);
\end{tikzpicture}
\end{center}

\begin{minipage}{0.35\textwidth}
\subcaption{\label{left} Lattice $L_0$}
\end{minipage}
\begin{minipage}{0.35\textwidth}
\subcaption{\label{right} Lattice $L_1$}
\end{minipage}

%\caption{Lattices $L_0$ (left) and $L_1$ (right).}
\label{diagram:2}
\end{figure}

\begin{example}\label{ex1}
Let $\kappa_1<\kappa_2$ be measurable cardinals. Then for any $i\in 2$ there exists a space $Y_i$ such that the lattice $\mathbf{OF}(Y_i)$ is isomorphic to $L_i$.
\end{example}

\begin{proof}
Set $\kappa_0=\omega$ and
for every $i\in 3$ fix any $\kappa_{i}$-complete ultrafilter $\mathcal{F}_{i}$ on $\kappa_{i}$. Let $X_{i}=\kappa_i\cup\{\F_i\}$ be the subspace of $\beta(\kappa_i)$. For convenience we denote the point $(\F_0,\F_1,\F_2)\in \prod_{i\in 3}X_i$ by $x^*$.

{\bf Construction of $Y_0$}.
Let $X$ be the subspace $(X_0{\times}X_1{\times}\kappa_2)\cup\{x^*\}$ of the Tychonoff product $\prod_{i\in 3}X_{i}$.
Put $Y_0=\mathrm{M}(X,x^*)$ (see Construction~\ref{cons}). One can check that $\mathbf{OF}(Y_0)=\{\Phi_{(i,j)}:(i,j)\in L_0\}$, where the free open filters $\Phi_{(i,j)}$ are defined as follows:
\begin{itemize}
\item $\Phi_{(2,2)}$ is generated by the family $\{(\prod_{i\in 3}F_i){\times}\{0\}:F_i\in\F_i\}$;
\item $\Phi_{(1,2)}$ is generated by the family $\{(F_0\cup\{\F_0\}){\times}F_1{\times}F_2{\times}\{0\}:F_i\in\F_i\}$;
\item $\Phi_{(2,1)}$ is generated by the family $\{F_0{\times}(F_1\cup\{\F_1\}){\times}F_2{\times}\{0\}:F_i\in\F_i\}$;
\item $\Phi_{(1,1)}$ is generated by the family $\{U\cup V: U\in \Phi_{(1,2)}$ and $V\in \Phi_{(2,1)}\}$;
\item $\Phi_{(0,0)}$ is generated by the family $\{(F_0\cup\{\F_0\}){\times}(F_1\cup\{\F_1\}){\times}F_2{\times}\{0\}:F_i\in\F_i\}$.
\end{itemize}
Moreover, the map $\phi:\mathbf{OF}(Y_0)\rightarrow L_0$, $\phi(\Phi_{(i,j)})=(i,j)$ is an order isomorphism.

\smallskip

{\bf Construction of $Y_1$}.
Let $X$ be the subspace
$$(X_0{\times}\kappa_1{\times}\kappa_2)\cup(\{\F_0\}{\times}\{\F_1\}{\times}\kappa_2)\cup (\{\F_0\}{\times}\kappa_1{\times}\{\F_2\})\cup \{x^*\}$$of the Tychonoff product $\prod_{i\in 3}X_{i}$.
Put $Y_1=\mathrm{M}(X,x^*)$. One can check that $\mathbf{OF}(Y_1)=\{\Phi_{(i,j)}:(i,j)\in L_1\}$, where the free open filters $\Phi_{(i,j)}$ are defined as follows:
\begin{itemize}
\item $\Phi_{(2,2)}$ is generated by the family $\{(\prod_{i\in 3}F_i){\times}\{0\}:F_i\in\F_i\}$;
\item $\Phi_{(1,1)}$ is generated by the family $\{(F_0\cup\{\F_0\}){\times}F_1{\times}F_2{\times}\{0\}:F_i\in\F_i\}$;
\item $\Phi_{(0,1)}$ is generated by the family $\{U\cup (\{\F_0\}{\times}\{\F_1\}{\times}F_2{\times}\{0\}): U\in \Phi_{(1,1)}$ and $F_2\in\F_2\}$;
\item $\Phi_{(1,0)}$ is generated by the family $\{U\cup (\{\F_0\}{\times}F_1{\times}\{\F_2\}{\times}\{0\}): U\in \Phi_{(1,1)}$ and $F_1\in\F_1\}$;
\item $\Phi_{(0,0)}$ is generated by the family $\{U\cup V: U\in \Phi_{(1,0)}$ and $V\in \Phi_{(0,1)}\}$.
\end{itemize}
Moreover, the map $\phi:\mathbf{OF}(Y_1)\rightarrow L_1$, $\phi(\Phi_{(i,j)})=(i,j)$ is an order isomorphism.
\end{proof}

A subset $A$ of a poset $P$ is called an {\em antichain} if $a\nleq b$ for any two distinct elements $a,b\in A$. The next proposition shows that it is easy to construct a space $X$ whose
lattice of free open filters contains large chains and antichains. Also, it reveals why we cannot use Katetov extension instead of Mooney's scheme during the construction of spaces with finite or linear lattices of free open filters.

\begin{proposition}\label{L}
For every cardinal $\kappa$ there exists an almost H-closed space $X$ such that the lattice $\mathbf{OF}(X)$ contains a chain and an antichain of cardinality $>2^\kappa$.
\end{proposition}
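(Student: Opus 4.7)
Plan: Fix a cardinal $\lambda$ with $\lambda > 2^\kappa$, and let $D$ be a discrete space of cardinality $\lambda$. My goal is to construct an almost H-closed space $X$ whose lattice $\mathbf{OF}(X)$ inherits the rich combinatorial structure of $\mathcal{P}(D)/\mathrm{fin}$. The key idea is to use Mooney's construction (Construction~\ref{cons}) applied to a carefully chosen space with rich ``internal'' filter structure, combined with a Katetov-extension argument, so that Proposition~\ref{supernew} yields an almost H-closed space with the desired lattice. Specifically, I plan to start with $X_0 = D$ equipped with a free regular open filter $\mathcal{F}$ (such as the Fr\'echet filter), and first apply Proposition~\ref{supernew} to produce a Hausdorff space $Y$ with $\mathbf{OF}(Y)$ order-isomorphic to the subposet $\{\mathcal{G} \in \mathbf{OF}(D) : \mathcal{F} \subseteq \mathcal{G}\}$, which equals all free filters on $D$. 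This subposet visibly contains a chain of size $\lambda^+ > 2^\kappa$ (via a decreasing tower of subsets of $D$ modulo finite) and an antichain of size $2^{2^\lambda} > 2^\kappa$ (via the collection of free ultrafilters on $D$, which are pairwise incomparable in $\mathbf{OF}(D)$).

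The space $Y$ produced this way is not yet almost H-closed, since the poset $\{\mathcal{G} : \mathcal{F} \subseteq \mathcal{G}\}$ has many maximal elements (all free ultrafilters on $D$). To remedy this, I plan to modify $Y$ by collapsing the excess maximal elements: take a topological ``Katetov-style'' extension $X = Y \cup \{p\}$ in which the neighborhood filter of the single adjoined point $p$ is specifically designed so that $p$ forces a unique maximal free open ultrafilter on $X$, while every strictly smaller free open filter of $Y$ survives in $X$ as a free open filter. Concretely, one chooses $p$'s neighborhood base to be (the open trace of) a fixed Katetov ultrafilter on $Y$, and verifies via Lemma~\ref{unew} and the argument in the proof of Proposition~\ref{supernew} that the resulting space is Hausdorff and has a unique free open ultrafilter, hence is almost H-closed by Theorem~\ref{char}.

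The final step is to verify that the embedding $\{\mathcal{G} \in \mathbf{OF}(D) : \mathcal{F} \subseteq \mathcal{G}\} \hookrightarrow \mathbf{OF}(X)$ still witnesses a chain and an antichain of cardinality $> 2^\kappa$ in $\mathbf{OF}(X)$. The chain will be given by filters on $X$ generated by images of a tower $\{T_\xi : \xi < \lambda^+\}$ on $D$, while the antichain will come from images of a maximal antichain of free ultrafilters on $D$ (specifically, free ultrafilters that become strictly comparable only via $p$'s ultrafilter, which sits above all of them in $\mathbf{OF}(X)$). Since $|\mathcal{P}(D)| = 2^\lambda$ and $2^{2^\lambda} > 2^\kappa$, both chain and antichain exceed $2^\kappa$.

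The main obstacle is the delicate balance in the second step: ensuring that attaching $p$ with the chosen neighborhood filter simultaneously (i) preserves Hausdorffness of the enlarged space, (ii) yields a unique free open ultrafilter on $X$ so that Theorem~\ref{char} applies, and (iii) does \emph{not} collapse the rich intermediate lattice structure of $\mathbf{OF}(Y)$ to a trivial one. The first two points require a careful verification using that the chosen ultrafilter on $Y$ is an open ultrafilter in the Katetov-extension sense (so it corresponds to a single point in $K(Y)$), while the third requires exhibiting, for each $\mathcal{G} \subsetneq$ the chosen ultrafilter, an explicit open set in $\mathcal{G}$ whose complement witnesses that $\mathcal{G}$ remains a proper free open filter of $X$ after attaching $p$.
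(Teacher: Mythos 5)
Your proposal breaks down at its central step, the ``collapsing'' of the excess maximal elements by adjoining a single point $p$. If $p$ is attached to $Y$ with neighbourhood trace a free open filter $\mathcal P$ on $Y$ (in particular if $\mathcal P$ is one fixed Katetov ultrafilter), then the free open ultrafilters on $X=Y\cup\{p\}$ are exactly those free open ultrafilters $\mathcal V$ on $Y$ with $\mathcal P\not\subseteq\mathcal V$: since $\mathcal P$ and $\mathcal V$ have open bases, meshing with $\mathcal P$ is the same as containing $\mathcal P$, and any $\mathcal V$ not containing $\mathcal P$ has an open element disjoint from a neighbourhood of $p$ and so stays free. Hence adding one point kills exactly the ultrafilters above $\mathcal P$; choosing $\mathcal P$ to be one Katetov ultrafilter removes only that single ultrafilter and leaves the other $2^{2^\lambda}$ free open ultrafilters of $Y$ untouched, so $X$ is nowhere near almost H-closed. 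Nor can any other choice of $\mathcal P$ at a single new point succeed: the free open ultrafilters of $Y$ correspond to the free ultrafilters $\mathcal W$ on $D$, and the intersection of all of them except one fixed $\mathcal W_0$ is just the filter generated by the cofinite sets of $D$ (every infinite subset of $D$ lies in many free ultrafilters), which is already contained in $\Phi_{\mathcal W_0}$ as well. So a free open filter $\mathcal P$ contained in all but one of these pairwise incompatible ultrafilters is automatically contained in all of them, contradicting $\mathcal P\not\subseteq\Phi_{\mathcal W_0}$. In short, one point cannot do the job; you must attach essentially the whole remainder, i.e.\ pass to a Katetov extension minus one point.

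This is exactly what the paper does, and it also explains the second problem with your plan: once you attach enough points to make the space almost H-closed, your proposed witnesses evaporate. In $K(\kappa)\setminus\{\mathcal U\}$ (the paper's space, with $\mathcal U$ an ultrafilter all of whose members have size $\kappa$), the free ultrafilters on the discrete part other than $\mathcal U$ are no longer free open filters --- each converges to the corresponding point of the remainder --- so an antichain of ultrafilters on $D$ (or a tower on $D$) does not survive as an antichain (or chain) in $\mathbf{OF}(X)$. The paper instead produces the witnesses from the remainder $X^*=X\setminus\kappa$: it shows the trace of $\F_{\inf}$ on $X^*$ has character at most $2^\kappa$ while all its elements have size greater than $2^\kappa$, so it extends to more than $2^\kappa$ ultrafilters on $X^*$, and joining each with $\mathcal U$ gives the antichain; a long strictly increasing family of filters on $X^*$ above that trace, again joined with $\mathcal U$, gives the chain. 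This mechanism --- building the chain and antichain from filters on the remainder combined with the unique surviving ultrafilter --- is the heart of the proof and is missing from your outline. (A minor additional point: a $\subseteq^*$-decreasing tower of length $\lambda^+$ on a set of size $\lambda$ need not exist, e.g.\ on $\omega$ under CH; but this is harmless, since a chain of length $\lambda>2^\kappa$, say via final segments, would have sufficed for that part.)
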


\begin{proof}
Let $\kappa$ be any infinite cardinal and $K(\kappa)$ be the Katetov extension of the discrete space $\kappa$. Fix an ultrafilter $\mathcal U$ on $\kappa$ such that every element of $\mathcal U$ has cardinality $\kappa$. For instance, one can consider any ultrafilter which contains the filter $\{S\subset \kappa: |\kappa\setminus S|<\kappa\}$. By $X$ we denote the subspace $K(\kappa)\setminus \{\mathcal U\}$ of $K(\kappa)$. Since $\kappa$ is a dense discrete subspace of $X$, $\mathcal U$ is the unique free open ultrafilter on $X$. Consider the coarsest free open filter $\F_{\inf}$ on $X$. Corollary~\ref{col} implies that $\F_{\inf}$ is generated by the family $\{\Int(\overline{U}): U\in \mathcal U\}$. Put $X^*=X\setminus \kappa$ and let $\mathcal G$ be the trace of the filter $\F_{\inf}$ on the set $X^*$. Clearly, the character of $\mathcal U$ is at most $2^{\kappa}$. It follows that the character of the filter $\mathcal G$ is $\leq 2^{\kappa}$ as well. It is easy to see that the closure of every subset $A\in[\kappa]^{\kappa}$ in $K(\kappa)$ is open and homeomorphic to $K(\kappa)$. Since $|K(\kappa)|> 2^{\kappa}$, we get that each element of $\F_{\inf}$ has cardinality $>2^{\kappa}$. It follows that every element $G\in \mathcal G$ has cardinality $>2^{\kappa}$.
The latter two facts imply that $\mathcal G$ is not an ultrafilter on $X^*$ and the set $\mathbf{A}$ of all ultrafilters on $X\setminus \kappa$ which contain $\mathcal G$ has cardinality $>2^{\kappa}$. It is straightforward to check that for each $\mathcal F\in\mathbf{A}$ the filter 
$$\mathcal T_{\F}=\{F\cup U: F\in \F, U\in \mathcal U\}$$ on $X$ is open and free. Then the family $\{\mathcal T_{\F}:\F\in\mathbf{A}\}\subset\mathbf{OF}(X)$ forms an antichain of size $>2^{\kappa}$. 

Pick any $\F\in\mathbf{A}$ such that $\{S\subset X^*: |X^*\setminus S|\leq 2^\kappa\}\subset \F$, which exists as every element of $\mathcal G$ has cardinality $> 2^\kappa$. Let $\lambda$ be the character of $\F$.
Taking into account that $\lambda> 2^\kappa$ and the character of $\G\leq 2^{\kappa}$, one can inductively construct a family $\{B_{\alpha}:\alpha\in \lambda\}\subset \F$ such that for each $\alpha\in\lambda$, $B_{\alpha}$ does not belong to the filter generated by the family $\mathcal G\cup\{B_{\xi}:\xi\in\alpha\}$.
%Fix any base $\mathcal B_{\G}$ of $\G$ of cardinality $\leq 2^\kappa$. For every $\alpha\in\lambda$ there exists $\theta(\alpha)$ such that $B_{\theta(\alpha)}$ does not belong to the filter generated by the family $\mathcal B_{\G}\cup\{B_{\xi}:\xi\in\alpha\}$. The ordinal $\theta(\alpha)$ is well-defined, because $|\mathcal B_{\G}\cup\{B_{\xi}:\xi\in\alpha\}|<\lambda$. Let $\delta(\alpha)=\sup\{\theta(\alpha), \theta(\theta(\alpha)),\ldots, \theta^{n}(\alpha),\ldots\}$. Clearly, $B_{\delta(\alpha)}$ does not belong to the filter generated by the family $\mathcal B_{\G}\cup\{B_{\xi}:\xi\in\delta(\alpha)\}$. %Since $\lambda$ is regular, the family $\{\delta(\alpha):\alpha\in \lambda\}$ is of cardinality $\lambda$.
For each $\alpha\in\lambda$ let $\H_{\alpha}$ be the filter on $X^*$ generated by the set $\mathcal G\cup \{B_{\xi}:\xi\in \alpha\}$. Then for every $\alpha\in \lambda$, the filter $$\F_{\alpha}=\{H\cup U: H\in \H_{\alpha}, U\in\mathcal U\}$$ on $X$ is free and open. Moreover, $\F_{\alpha}\subsetneqq \F_{\beta}$ if and only if $\alpha\in\beta$. Hence the set $\{F_{\alpha}:\alpha\in \lambda\}\subset \mathbf{OF}(X)$ is a chain of cardinality $\lambda>2^\kappa$.
\end{proof}

Results of this chapter yield the following natural problem.

\begin{problem}
Does there exist in ZFC a Hausdorff space $X$ such that $\mathbf{OF}(X)$ is a finite nonlinear lattice? 
\end{problem}

%A {\em character} of a filter $\F$ is the smallest cardinal $\kappa$ such that $\F$ possesses a base of size $\kappa$. 

\section*{Acknowledgements} We would like to thank the referee for many valuable comments and suggestions, which essentially improved the presentation of the paper.

\end{document}